\numberwithin{equation}{section}
\numberwithin{figure}{section}
\theoremstyle{plain}
\newtheorem{theorem}{Theorem}[section]
\newtheorem{proposition}[theorem]{Proposition}
\newtheorem{lemma}[theorem]{Lemma}
\newtheorem{corollary}[theorem]{Corollary}
\theoremstyle{definition}
\newtheorem{definition}[theorem]{Definition}
\newtheorem{example}[theorem]{Example}
\theoremstyle{remark}
\newtheorem{remark}[theorem]{Remark}
\global\long\def\Ric{\mathrm{Ric}}
\global\long\def\Hess{\mathrm{\mathrm{Hess}}}
\global\long\def\scal{\mathrm{\mathrm{scal}}}
\global\long\def\bbR{\mathbb{\mathbb{R}}}
\global\long\def\div{\mathrm{div}}
\global\long\def\tr{\mathrm{tr}}
\DeclareMathOperator{\ric}{Ric}
\DeclareMathOperator{\hess}{Hess}
\global\long\def\bbR{\mathbb{\mathbb{R}}}
\begin{document}
\title{Rigidity of Homogeneous Gradient Soliton Metrics and Related Equations }
\author{Peter Petersen}
\address{520 Portola Plaza\\
 Dept. of Mathematics, UCLA\\
 Los Angeles, CA 90095.}
\email{petersen@math.ucla.edu}
\urladdr{https://www.math.ucla.edu/\textasciitilde petersen/}
\author{William Wylie}
\address{215 Carnegie Building\\
 Dept. of Math, Syracuse University\\
 Syracuse, NY, 13244.}
\email{wwylie@syr.edu}
\urladdr{https://wwylie.expressions.syr.edu}
\subjclass[2000]{53C25}
\keywords{homogeneous manifold, gradient soliton, Hessian, rigidity, semi-direct product.}
\thanks{The second author was supported by grant NSF-\#1654034.}
\begin{abstract}
We prove structure results for homogeneous spaces that support a non-constant
solution to two general classes of equations
involving the Hessian of a function and an invariant $2$-tensor.
We also consider trace-free versions of these systems. Our results
generalize earlier rigidity results for gradient Ricci solitons
and warped product Einstein metrics. In particular, our results apply to
homogeneous gradient solitons of any invariant curvature flow and
give a new structure result for homogeneous conformally Einstein
metrics. 
\end{abstract}

\maketitle

\section{Introduction}
Let $G$ be a group acting by isometries on a Riemannian manifold
$(M,g)$ and $f$ a real valued function on $M$. If $f$ is invariant
under $G$, then the Hessian of $f$ is also invariant. In this paper
we are interested in rigidity phenomena that occur when we conversely
assume that the Hessian is invariant but the function is not. We focus
on the case where $G$ acts transitively so that any invariant function
is constant. The prototypical example of a function which has invariant
Hessian but is not invariant is a linear function on $\bbR^{n}$ whose
Hessian, being zero, is invariant under the full isometry group. Another
prominent example is the restriction of coordinate functions $x^{i}$
in $\mathbb{R}^{n+1}$ to the sphere $S^{n}$, whose Hessian on the
sphere satisfies $\Hess x^{i}=-x^{i}g$. In this case, while $\Hess x^{i}$
is not invariant under the full isometry group, its trace free part
is and it also satisfies an equation of the form $\Hess x^{i}=x^{i}q$
where $q=-g$ is invariant under the isometry group. Note that the coordinate
functions in $\mathbb{R}^{n,1}$ restricted to hyperbolic space satisfy
a similar equation $\Hess x^{i}=x^{i}g$.

More complicated examples come from gradient solitons to curvature
flows. These satisfy $\Hess f=\lambda g-q$, where $q$ is an expression
involving the curvatures of the metric. Equations involving the Hessian
of a function and the curvature also come up naturally in the study
of warped products and conformal changes of metrics.

Motivated by these examples we consider the following general classes
of equations involving $q$, a symmetric two tensor on a Riemannian
manifold, 
\begin{align}
\mathrm{Hess}f & =q,\label{eqn 1.1}\\
\mathrm{Hess}w & =wq,\label{eqn 1.2}
\end{align}
where $f,w$ are smooth functions. Given a Riemannian manifold $(M,g)$
and a fixed tensor $q$ we denote by $F(M,g,q)$ and $W(M,g,q)$ the
space of all solutions to equation (\ref{eqn 1.1}) and (\ref{eqn 1.2})
respectively. We will often simply write $F(q)$ and $W(q)$.

When $q$ is fixed, equations (\ref{eqn 1.1}) and (\ref{eqn 1.2})
are overdetermined in $f$ or $w$ respectively, as there is only
one unknown function but $\frac{n(n+1)}{2}$ equations. Thus the solution
spaces $F$ and $W$ are small except in exceptional circumstances.
On the other hand, if $q$ is invariant under $G$, a group of isometries,
then $G$ acts on $F$ and $W$. Thus if $G$ is a large group we
have a large group acting on a small space and this also leads to
rigidity. Roughly speaking, this is the  approach  we use to prove general structure theorems
for any $G$-homogeneous Riemannian metric that supports non-constant
solutions to (\ref{eqn 1.1}) and (\ref{eqn 1.2}) for a $G$-invariant
$q$.

Our results build on previous work of the authors in two cases involving
the Ricci curvature. Namely, functions in $F(\lambda g-\Ric)$ corresponding
to gradient Ricci solitons and functions in $W(\frac{1}{m}(\Ric-\lambda g))$, $m\in \mathbb N$
corresponding to warped product Einstein metrics \cite{Kim-Kim, Case-Shu-Wei}. These equations
on homogeneous manifolds were studied by the authors in \cite{Petersen-Wylie-Symm}
and by the authors along with He in \cite{He-Petersen-Wylie-Homogeneous} respectively. The main idea
of this paper is that a general structure extends to the more general equations, with some important variation.

In \cite{Petersen-Wylie-Symm} the authors showed that a homogeneous
gradient Ricci soliton is the product of an Einstein metric and a
Euclidean space. We prove the following generalization of this result.

\begin{theorem} \label{Thm:Soliton} Let $(M,g)$ be a $G$-homogeneous
manifold and  $q$ a $G$-invariant symmetric two-tensor which is divergence free.
If there is a non-constant function in $F(q)$, then $(M,g)$ is a
product metric $N\times\mathbb{R}^{k}$ and $f$ is a function on
the Euclidean factor. \end{theorem}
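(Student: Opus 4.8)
The plan is to play the rigidity of the solution space against the size of $G$. First I would record that $F(q)$ is an affine space: if $f_{1},f_{2}\in F(q)$ then $\Hess(f_{1}-f_{2})=q-q=0$, so $F(q)$ is a coset of the vector space $\mathcal{A}$ of affine functions (those $h$ with $\Hess h=0$, equivalently with parallel gradient). Since $M$ is homogeneous it is complete; letting $k=\dim\mathcal{A}-1$, the gradients of functions in $\mathcal{A}$ form a $k$-dimensional space $\mathcal{P}$ of parallel vector fields, and choosing an orthonormal basis $\nabla h_{1},\dots,\nabla h_{k}$ of $\mathcal{P}$, the map $(h_{1},\dots,h_{k}):M\to\mathbb{R}^{k}$ realizes a Riemannian product $M=N\times\mathbb{R}^{k}$ in which the $h_{i}$ are the Euclidean coordinates. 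By construction $N$ admits no non-constant affine function. The theorem asserts exactly that $X:=\mathrm{pr}_{TN}\nabla f\equiv 0$; granting this, $f$ is a function of $h_{1},\dots,h_{k}$ and non-constancy forces $k\geq 1$.

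Second, I would extract a symmetry of $X$. Since $q$ is $G$-invariant, for every $\phi\in G$ we have $\Hess(\phi^{*}f)=\phi^{*}q=q$, so $\phi^{*}f\in F(q)$ and $\nabla(\phi^{*}f)-\nabla f=\nabla(\phi^{*}f-f)\in\mathcal{P}$; as isometries preserve the splitting $TN\oplus T\mathbb{R}^{k}$, this forces $\phi^{*}X=X$. Thus $X$ is a $G$-invariant vector field, and transitivity of $G$ gives that $|X|$ is constant (and $\nabla X$ is $G$-invariant). Differentiating $|X|^{2}$ in an arbitrary direction and using $\nabla_{Y}\nabla f=q(Y,\cdot)^{\sharp}$ yields $q(X,\cdot)=0$, hence $\nabla_{X}X=0$. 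Everything is reduced to showing the constant $|X|$ vanishes.

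Third, I would use that $q$ is divergence free. The function $\Delta f=\tr q$ is $G$-invariant, hence constant, so the Bochner-type identity $\div(\Hess f)=d(\Delta f)+\Ric(\nabla f,\cdot)$ together with $\div q=0$ gives $\Ric(\nabla f,\cdot)=0$; Bochner's formula then reads $\tfrac{1}{2}\Delta|\nabla f|^{2}=|\Hess f|^{2}+\langle\nabla f,\nabla\Delta f\rangle+\Ric(\nabla f,\nabla f)=|q|^{2}$, and $|q|^{2}$ is again constant. The crucial point is that this analysis descends to the factor $N$: I would show $q$ is block diagonal for the splitting and that its $N$-block $q_{N}$ is divergence free on $N$ with constant trace. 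Then the restriction $\hat f$ of $f$ to a slice $N\times\{y\}$ satisfies $\Hess_{N}\hat f=q_{N}$ with intrinsic gradient $\nabla_{N}\hat f=X|_{N\times\{y\}}$, so $|\nabla_{N}\hat f|$ is constant; running the same two identities intrinsically on $N$ now forces $|q_{N}|^{2}=\tfrac{1}{2}\Delta_{N}|\nabla_{N}\hat f|^{2}=0$. Hence $\Hess_{N}\hat f=0$, so $\hat f$ is affine on $N$, hence constant, i.e. $X=0$.

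The main obstacle is this reduction to $N$: showing $q$ has no mixed block and that $q_{N}$ inherits being divergence free with constant trace. I expect this to come from combining the $G$-invariance of $q$ with the fact that the coordinate fields $\nabla h_{i}$ are parallel (hence parallel Killing fields), using the identities $\Hess\big((\nabla h_{i})f\big)=\nabla_{\nabla h_{i}}q$ and $\Delta\big((\nabla h_{i})f\big)=(\nabla h_{i})(\tr q)=0$, which make each $(\nabla h_{i})f$ harmonic and control how $\div$ and $\nabla$ interact with contraction against $\nabla h_{i}$. A second point to stress is that $M$ need not be compact, so no integration is available; this is exactly why the Bochner arguments are arranged to produce pointwise constancy of $\tr q$, $|q|^{2}$, and their analogues on $N$ rather than integral identities. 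If the direct reduction proves awkward, an alternative is to argue on $F(q)$ itself: $G$ acts affinely on the finite-dimensional space $F(q)$ with orthogonal linear part on $\mathcal{A}$, and one can try to replace $f$ by a solution that is $G$-invariant modulo constants, whose gradient is then a $G$-invariant field $X'$ with $q(X',\cdot)=0$, to which the same dichotomy applies.
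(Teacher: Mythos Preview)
Your approach is correct and is organized differently from the paper's. The paper first proves, with no divergence hypothesis, a trichotomy for $F(q)$ (its Theorem~\ref{Thm:F}): either $f$ depends only on the Euclidean factor, or $M$ is a one-dimensional extension $g=dr^{2}+g_{r}$ with $f=ar+b$, or $M=N\times\mathbb{R}^{k}$ with $N$ a one-dimensional extension and $f=ar(x)+v(y)$. This passes through a group-theoretic lemma: when $\gamma^{*}f-f$ is constant for every $\gamma$, the assignment $\gamma\mapsto C_{\gamma}$ is a surjective homomorphism $G\to\mathbb{R}$, producing the extension $H\rtimes\mathbb{R}$. Only afterwards does the paper impose $\div q=0$ and use Bochner on the distance function $r$ (Proposition~\ref{Prop:ssplit-properties}) to force $\Hess r=0$, collapsing the extension to a product and contradicting maximality of the Euclidean splitting. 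Your route bypasses the one-dimensional-extension vocabulary entirely and runs the same Bochner argument directly on the factor $N$; the paper's detour is what buys it the more general Theorem~\ref{Thm:F}, but for this particular statement your path is shorter.

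The obstacle you flag is easier than you suggest, and the tools you propose for it are not the right ones: harmonicity of $\partial_{i}f$ is weak on a noncompact manifold, and $\nabla_{\partial_{i}}q=0$ would require $G$ to contain the Euclidean translations, which is not assumed. What you have already established does the job. Since $\mathbb{R}^{k}$ is the Euclidean de~Rham factor, every isometry of $M$ splits, and the transitive action decomposes as $G_{1}\times G_{2}$ (this is precisely what the paper invokes in its proof of Theorem~\ref{Thm:F}); $G_{2}$-invariance of your field $X\in\Gamma(TN)$ then forces $X$ to be independent of the $\mathbb{R}^{k}$-coordinate. As $X|_{N\times\{y\}}=\nabla_{N}(f|_{N\times\{y\}})$ and $N$ is connected, this gives $f(n,y)=\hat f(n)+c(y)$. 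Hence $q=\Hess_{N}\hat f+\Hess_{\mathbb{R}^{k}}c$ is automatically block diagonal, $q_{N}=\Hess_{N}\hat f$ is $G_{1}$-invariant with constant trace, and $\div_{N}q_{N}=0$ follows by restricting $\div q=0$ to $TN$. Your Bochner computation on $N$ then finishes exactly as written: $0=\tfrac{1}{2}\Delta_{N}|X|^{2}=|q_{N}|^{2}$, so $\hat f$ is affine on $N$, hence constant.
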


\begin{remark} Note that $2\mathrm{div}\Ric=d\scal$, so on a homogeneous space the
Ricci tensor is divergence free. By Proposition \ref{Prop:ssplit-properties}  the divergence free assumption on $q$  can also be replaced with the assumption that $\mathrm{Ric}(\nabla f, \nabla f) \geq 0$ for $f \in F(q)$, which is also satisfied for homogeneous gradient Ricci solitons as $\Ric(\nabla f) = 0$.   \end{remark}

\begin{remark}   Griffin applies Theorem \ref{Thm:Soliton} to  study homogeneous gradient solitons for the four-dimensional Bach flow in \cite{Griffin}. 
\end{remark}

On the other hand, Theorem \ref{Thm:Soliton}
is not true if we do not assume $q$ is divergence free, see Example
\ref{example:Hyperbolic}. 
 We prove a  general structure theorem for $F(q)$
without the divergence free assumption (Theorem \ref{Thm:F}), whose precise statement we delay until section 3. The general rigidity we obtain involves spaces we call \emph{one-dimensional
extensions}.

\begin{definition} A $G$-homogeneous space $(M=G/G_{x},g)$ is called
a one-dimensional extension if there is a closed subgroup, $H \subset G$ that contains $G_x$ such that  there is a surjective Lie group homomorphism from $G$ to the additive
real numbers whose kernel is $H$. \end{definition}

The algebraic condition of being a one-dimensional extension implies
a geometric/topological product structure such that $M$ is diffeomorphic
to $\bbR\times(H/G_{x})$ and $g=dr^{2}+g_{r}$ where $g_{r}$ is
a one-parameter family of homogeneous metric on $H/G_{x}$. Moreover,
$G$ acts as a semi-direct product $G=H\rtimes\bbR$ on $g$. Theorem
\ref{Thm:F} roughly says that if $F(q)$ contains a non-constant
function then $M$ is either a one-dimensional extension, a product
of a one-dimensional extension with Euclidean space, or a space as
in Theorem \ref{Thm:Soliton}. In particular, Theorem \ref{Thm:F}
applies to any homogeneous gradient soliton for an invariant curvature
flow. We are not aware of any examples of flows where examples of gradient solitons
on one-dimensional extensions have arisen.

One-dimensional extensions play a larger role in the study of $W(q)$
as they occur even in the warped product Einstein case. In fact, in
\cite{Lafuente} Lafuente showed that a homogeneous space admits a
one-dimensional extension which is the base of a warped product Einstein manifold if and only if it is an algebraic Ricci soliton. For general $q$, we obtain
the following structure result.

\begin{theorem} \label{Thm:W intro} Let $(M,g)$ be a $G$-homogeneous
manifold and  $q$ a $G$-invariant symmetric two-tensor. If $W(q)$ is nontrivial,
then $(M^{n},g)$ is isometric to one of the following 
\begin{enumerate}
\item a space of constant curvature and $\dim W=n+1$, 
\item the product of a homogeneous space and a space of constant curvature
with $W$ consisting of functions on the constant curvature factor
and $2\leq\dim W\leq n$,
\item the quotient of the product of a homogeneous space and $\mathbb{R}$,  $(H\times \mathbb{R})/\pi_1(M)$, 
with $W=\{w:\mathbb{R}\rightarrow \mathbb{R} \mid w''=\tau w\}$ where $\tau<0$ is constant, or  
\item a one-dimensional extension and $\dim W=1$. 
\end{enumerate}
If, in addition, $q$ is Codazzi, then $(M,g)$ is isometric to one of the cases (1)-(3). 
\end{theorem}

\begin{remark} A symmetric $2$-tensor is Codazzi if its covariant derivative is symmetric, i.e. $(\nabla_X q)(Y,Z)  = (\nabla_Y q)(X,Z)$, for all vectors $X,Y,Z$.  In general, divergence free and Codazzi are different conditions.  However, a Codazzi tensor is divergence free if and only if it has constant trace.  Thus  a Codazzi tensor that is  invariant under a transitive group of isometries is  divergence free. See section 6 for further discussion of examples in case (4) where  $q$ is divergence free.
  \end{remark}

We also consider the  trace-free versions of these equations,
\begin{align}
\tag{1.1a}\mathring{\mathrm{Hess}f} & =\mathring{q},\label{eqn 1.1a}\\
\tag{1.2a}\mathring{\mathrm{Hess}w} & =w\mathring{q},\label{eqn 1.2a}
\end{align}
where $\mathring{q}$ is the trace-free part of $q$, $ \mathring{q} = q - \frac{\tr q}{\dim(M)} g$.  We write $\mathring{F}(q)$ and $\mathring{W}(q)$ for the solution
spaces to (\ref{eqn 1.1a}) and (\ref{eqn 1.2a}) respectively. Non-trivial functions in $\mathring{F}(-\Ric)$
are called Ricci almost solitons in the literature, see for example \cite{Homogen-Almost-Ricci}.
Non-trivial functions in $\mathring{W}(\frac{1}{2-n}\Ric)$ are called almost Einstein metrics in the literature.  If this case, if the function is positive then the metric is  conformal to an Einstein metric.  See, for example, \cite{DM1, DM2, Gover, Leitner, KR-ConformalEinstein} and the reference there-in.

The study of the solution spaces $\mathring{F}$
and $\mathring{W}$ can in the homogeneous case be reduced to the
study of a corresponding $F$ or $W$ space. A space of functions
$\mathring{F}$ (or $\mathring{W}$) is called \emph{essential} if
$\mathring{F}(q)\neq F(q')$ for all $q'$ (or $\mathring{W}(q)\neq W(q')$
for all $q'$).  We have the following rigidity result for essential
spaces of solutions.

\begin{theorem} \label{Thm:Essential} Let $(M,g)$ be a $G$-homogeneous
manifold and  $q$ a $G$-invariant symmetric two-tensor. If $\mathring{F}(q)$
is essential then $(M,g)$ is a space of constant curvature. If $\mathring{W}(q)$
is essential, then $(M,g)$ is locally conformally flat. \end{theorem}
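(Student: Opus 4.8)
The plan is to show, in both cases, that essentiality forces the ambient metric to carry many \emph{concircular} functions --- non-constant solutions of $\Hess h=\psi g$ for some function $\psi$ --- and then that a (conformally) homogeneous metric carrying enough of these must be a space form, respectively locally conformally flat.

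\emph{Reduction for $\mathring F(q)$.} On a $G$-homogeneous space $\tr q$ is constant, so $\mathring q=q-cg$ with $c$ constant, and any $f\in\mathring F(q)$ satisfies $\Hess f=q+(\tfrac1n\Delta f-c)g$. Assuming $\mathring F(q)$ nonempty, fix $f_0\in\mathring F(q)$; then $\mathring F(q)=f_0+V$, where $V=\{h:\mathring{\Hess}h=0\}$ is the space of concircular functions, a $G$-subrepresentation of $C^\infty(M)$ since $h\mapsto\mathring{\Hess}h$ is $G$-equivariant. If $V$ contained no non-constant function, then $\varphi^*f_0-f_0$ would be constant for every $\varphi\in G$, so $\Hess f_0=\varphi^*(\Hess f_0)$ would be $G$-invariant, hence $\Delta f_0$ constant and $f_0\in F(q')$ with $q'=q+(\tfrac1n\Delta f_0-c)g$ a $G$-invariant tensor; then $\mathring F(q)=F(q')$, contradicting essentiality. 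So essentiality yields a non-constant $h$ with $\Hess h=\psi g$.

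\emph{The rigidity.} Differentiating $\nabla_X\nabla h=\psi X$ gives $R(X,Y)\nabla h=d\psi(X)Y-d\psi(Y)X$; in particular $d\psi$ is a multiple of $dh$, and every plane containing $\nabla h$ has sectional curvature depending only on $h$. Since $V$ is a $G$-module and $G$ is transitive, the subspace $\mathcal D_x:=\{\nabla h(x):h\in V\}$ defines a $G$-invariant distribution; a short computation from $\nabla_X\nabla h=\psi X$ shows $\mathcal D$ is integrable with totally geodesic leaves and umbilic complement, so $M$ is a warped product $B\times_\phi N$ with $\dim B=\dim\mathcal D$, the functions in $V$ restricting to concircular functions on $B$ whose gradients span $TB$. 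Because $h\mapsto\psi_h$, hence $h\mapsto d\psi_h$, is linear, the identity above on $B$ defines (using the spanning) a bilinear form $\beta$ with $R^B(X,Y)v=\beta(X,v)Y-\beta(Y,v)X$ for all $v\in TB$; the curvature symmetries force $\beta=\kappa g_B$, so $B$ has constant curvature (by Schur if $\dim B\geq3$, trivially if $\dim B=2$). It remains to rule out $\mathcal D\neq TM$. Here I would invoke Theorem~\ref{Thm:F}: a homogeneous warped product over a space form with non-trivial fibre falls, by that classification, under a Euclidean product or a (product with a) one-dimensional extension, and in each such case $f_0$ is rigidly constrained as in the reduction step, so $\mathring F(q)$ is not essential. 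Hence $\mathcal D=TM$ and $(M,g)$ has constant curvature. \textbf{Separating this genuinely rigid case from the reducible one is the crux of the argument.}

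\emph{The case $\mathring W(q)$.} First, essentiality forces $\dim\mathring W(q)\geq2$: if $\mathring W(q)=\bbR w_0$, then on $\{w_0\neq0\}$ one has $w_0\in W(q')$ for $q'=\mathring q+\tfrac1{nw_0}\Delta w_0\,g$, and any $w_1\in W(q')$ satisfies $\mathring{\Hess}w_1=w_1\mathring q$, so $W(q')=\bbR w_0=\mathring W(q)$, a contradiction. Now choose $w_0\in\mathring W(q)$ nonzero on an open set $U$, put $\bar g=w_0^{-2}g$ on $U$ and $\sigma=-\log w_0$. From the conformal identity $\mathring{\Hess}_{\bar g}f-\mathring{\Hess}_gf=\text{(trace-free part of }-2(d\sigma\otimes df+df\otimes d\sigma))$ together with $\mathring{\Hess}_gw=w\mathring q$ for $w\in\mathring W(q)$, one checks that every ratio $w/w_0$ is $\bar g$-concircular, so $(U,\bar g)$ carries a non-trivial space of concircular functions and $G$ acts on it conformally. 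Running the same curvature analysis --- with conformal rather than isometric invariance of the concircular space --- shows the relevant gradients span, so $(U,\bar g)$, and therefore $(M,g)$, is locally conformally flat. As before, the delicate point is excluding a non-trivial warped-product splitting, compounded here by $\bar g$ being only conformally, not isometrically, homogeneous.
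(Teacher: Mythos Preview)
Your approach has a genuine gap precisely where you flag it: the step ruling out $\mathcal D\neq TM$. Invoking Theorem~\ref{Thm:F} there is not legitimate. That theorem classifies $G$-homogeneous spaces with a non-constant $f\in F(q')$ for some $G$-\emph{invariant} $q'$; but you have $f_0\in\mathring F(q)$, whose Hessian $\Hess f_0=\mathring q+\tfrac{1}{n}(\Delta f_0)\,g$ need not be $G$-invariant (indeed it is invariant exactly when $\mathring F(q)$ is inessential, which you are assuming is false). Nor does Theorem~\ref{Thm:F} say anything about homogeneous warped products $B\times_\phi N$ with $B$ a space form. The sentence ``in each such case $f_0$ is rigidly constrained as in the reduction step'' is therefore circular. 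The analogous gap in your $\mathring W$ argument is worse, since after the conformal change you lose isometric homogeneity altogether; you acknowledge this, but do not repair it. (There is also a minor issue: your reduction only produces a non-constant concircular $h$, not one with $\psi\not\equiv 0$; a parallel gradient would survive your argument but gives a Killing, not a genuinely conformal, field.)

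The paper's proof bypasses the entire warped-product analysis with one lemma you do not use: on a homogeneous space, any conformal field is either Killing or the space is locally conformally flat (Proposition~\ref{Prop:CField}, proved by observing that $D_V|C|^2=-2\,\tr(L_Vg)\,|C|^2$ for the Weyl/Cotton tensor $C$, whose norm is constant by homogeneity). For $\mathring F$: essentiality means $\mathring V\neq V$ (Proposition~\ref{Prop:TF-simple}), so some $h\in\mathring V$ has $\Hess h=\psi g$ with $\psi\not\equiv 0$; then $\nabla h$ is non-Killing conformal, so $(M,g)$ is locally conformally flat; Takagi's classification (Theorem~\ref{thm:Takagi}) lists the possibilities, and for each non-trivial product on that list Proposition~\ref{Prop:Products} shows $\mathring V=V$, contradicting essentiality---so only constant curvature remains. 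For $\mathring W$: after showing $\dim\mathring W\geq 2$ (as you do), the paper constructs the conformal field $w_1\nabla w_2-w_2\nabla w_1$ directly from two independent solutions, shows it is non-Killing unless $\mathring W(q)=W(q')$ for an explicit $q'$, and applies Proposition~\ref{Prop:CField} again. No conformal change, no concircular analysis on an auxiliary metric.
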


Note that homogeneous locally conformally flat metrics are classified
by Takagi in \cite{Takagi} (see also Theorem \ref{thm:Takagi}).
Theorem \ref{Thm:Essential} combined with structure results for $F$
and $W$ as well as Takagi's classification yield the following corollaries.

\begin{corollary} \label{cor:oF-structure} Let $(M,g)$ be a $G$-homogeneous
manifold and  $q$ a $G$-invariant symmetric two-tensor which is divergence free.
If there is a non-constant function in $\mathring{F}(q)$, then $(M,g)$
is either a space of constant curvature or is a product metric $N\times\mathbb{R}^{k}$
with $f$ being a function on the Euclidean factor. \end{corollary}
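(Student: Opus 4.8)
The plan is to run a dichotomy on whether $\mathring{F}(q)$ is \emph{essential}, handling the essential case by a direct appeal to Theorem \ref{Thm:Essential} and reducing the non-essential case to Theorem \ref{Thm:Soliton}. If $\mathring{F}(q)$ is essential, then since it is nontrivial by hypothesis Theorem \ref{Thm:Essential} applies directly and gives that $(M,g)$ has constant curvature, which is one of the two allowed conclusions. Observe that the divergence-free hypothesis on $q$ plays no role in this case.

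Now suppose $\mathring{F}(q)$ is not essential, so by definition $\mathring{F}(q) = F(q')$ for some symmetric two-tensor $q'$. The heart of the argument is to verify that $q'$ again satisfies the hypotheses of Theorem \ref{Thm:Soliton}. For $G$-invariance: fix a non-constant $f \in \mathring{F}(q) = F(q')$; for $a \in G$, since the Hessian and the trace-free operation commute with pullback by the isometry $a$ and $\mathring{q}$ is $G$-invariant, $a^{*}f$ again lies in $\mathring{F}(q) = F(q')$, whence $\Hess(a^{*}f) = q'$; but $\Hess(a^{*}f) = a^{*}(\Hess f) = a^{*}q'$, so $a^{*}q' = q'$. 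Thus $q'$ is $G$-invariant. Comparing the trace-free parts of the identities $\Hess f = q'$ and $\mathring{\Hess f} = \mathring{q}$ gives $\mathring{q'} = \mathring{q}$, hence $q' = q + \frac{\tr q' - \tr q}{\dim M}\, g$; since $q$ and $q'$ are $G$-invariant and $G$ acts transitively, their traces are constant, so $q' = q + c\, g$ with $c$ constant. As $q$ is divergence free and $\div(c\, g) = dc = 0$, the tensor $q'$ is divergence free as well.

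It then remains to invoke Theorem \ref{Thm:Soliton} with the tensor $q'$: it is $G$-invariant and divergence free, and $F(q') = \mathring{F}(q)$ contains the non-constant function $f$, so $(M,g)$ is a product $N \times \mathbb{R}^{k}$ and $f$ is a function on the Euclidean factor. Together with the essential case this proves the corollary. The only step that requires any care is the reduction in the non-essential case: recognizing that $q'$ is forced to be $G$-invariant by the pullback computation, and then that it differs from $q$ by a constant multiple of $g$ and hence inherits the divergence-free property. Once this is in hand, the conclusion is immediate from the two cited theorems, so I do not expect a serious obstacle.
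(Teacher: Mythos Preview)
Your proof is correct and follows essentially the same approach as the paper: split into the essential and inessential cases, use Theorem~\ref{Thm:Essential} (equivalently Theorem~\ref{Thm:EssentialF}) for the former, and in the latter observe that $q'$ is $G$-invariant and differs from $q$ by a constant multiple of $g$, hence is divergence free, so Theorem~\ref{Thm:Soliton} applies. The paper packages the $G$-invariance of $q'$ and the relation $q'=q-\phi g$ into Proposition~\ref{Prop:TF-simple}, whereas you verify these directly, but the content is the same.
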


\begin{corollary} \label{Corollary:oW-structure} If $(M,g)$ is a $G$-homogeneous
manifold and  $q$ is a $G$-invariant symmetric two-tensor such
that $\mathring{W}$ is non-trivial, then $(M,g)$ is isometric to
either 

\begin{enumerate}
\item $S^{n}(\kappa)/\Gamma$, $\bbR^{n}/\Gamma$, $H^{n}(-\kappa)$, $(S^{k}(\kappa)/\Gamma)\times H^{n-k}(-\kappa)$,
$(\bbR^{1}/\Gamma)\times H^{n-1}(-\kappa)$, or $(S^{n-1}(\kappa)\times\bbR^{1})/\Gamma$, 
\item a direct product of a homogeneous space and a space of constant curvature
with $\mathring{W}$ consisting of functions on the constant curvature
factor,
\item the quotient of the product of a homogeneous space and $\mathbb{R}$,  $(H\times \mathbb{R})/\pi_1(M)$, 
with $W=\{w:\mathbb{R}\rightarrow \mathbb{R} \mid w''=\tau w\}$ where $\tau<0$ is constant, or  
\item a one-dimensional extension of a homogeneous space. 
\end{enumerate}
Moreover, when $(M,g)$ is not in case (1), $\mathring{W}(q)=W(q')$,
where $q'$ is a $G$-invariant tensor of the form $q'=q-\lambda g$
for some $\lambda\in\mathbb{R}$. If, in addition, $q$ is Codazzi, then $(M,g)$ is isometric to one of the cases (1)-(3). \end{corollary}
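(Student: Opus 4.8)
The plan is to split on whether $\mathring{W}(q)$ is essential in the sense made precise above.

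\emph{The essential case.} If $\mathring{W}(q)$ is essential, then Theorem~\ref{Thm:Essential} gives directly that $(M,g)$ is locally conformally flat; since a homogeneous Riemannian manifold is complete, Takagi's classification (Theorem~\ref{thm:Takagi}) then forces $(M,g)$, up to the quotients recorded there, to be one of $S^{n}(\kappa)/\Gamma$, $\mathbb{R}^{n}/\Gamma$, $H^{n}(-\kappa)$, $(S^{k}(\kappa)/\Gamma)\times H^{n-k}(-\kappa)$, $(\mathbb{R}^{1}/\Gamma)\times H^{n-1}(-\kappa)$, or $(S^{n-1}(\kappa)\times\mathbb{R}^{1})/\Gamma$. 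This is precisely case~(1), and since $(M,g)$ is of type~(1) here the ``moreover'' clause imposes nothing, so this branch is complete.

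\emph{The non-essential case.} Now suppose $\mathring{W}(q)$ is not essential, so by definition there is a symmetric two-tensor $q'$ with $\mathring{W}(q)=W(q')$, a space which is nontrivial by hypothesis. The crucial preliminary step is to identify $q'$. Pick a nonzero $w\in W(q')=\mathring{W}(q)$; then $\Hess w=wq'$, and taking the trace-free part of this identity and comparing with $\mathring{\Hess}\,w=w\mathring{q}$ yields $w\,(\mathring{q'}-\mathring{q})=0$. Since a nonzero element of $W(q')$ cannot vanish on a nonempty open subset of the connected manifold $M$ (restrict to a geodesic to obtain a second-order linear ODE for $w$), we conclude $\mathring{q'}=\mathring{q}$, i.e. $q'=q-\lambda g$ for some smooth function $\lambda$. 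To upgrade $\lambda$ to a constant, note that $\mathring{W}(q)$ is $G$-invariant, being the solution space of a $G$-invariant system, and hence so is $W(q-\lambda g)$; therefore $W(q-(\lambda\circ\sigma)g)=W(q-\lambda g)$ for every $\sigma\in G$, and inserting a nonzero $w$ of this common space into both equations forces $\lambda\circ\sigma=\lambda$. Transitivity of $G$ then makes $\lambda$ a real constant, and $q'=q-\lambda g$ is $G$-invariant because $q$ and $g$ are. (This is exactly the reduction of the trace-free equation to the untraced one in the homogeneous setting, and may be quoted if it has been isolated as a lemma earlier.)

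\emph{Conclusion.} With $q'$ a $G$-invariant symmetric two-tensor and $W(q')=\mathring{W}(q)$ nontrivial, Theorem~\ref{Thm:W} applies to $(M,g,q')$, and its four alternatives correspond to the four cases of the corollary: a space of constant curvature is homogeneous and locally conformally flat, hence of type~(1); the direct product with a constant-curvature factor, with $W(q')=\mathring{W}(q)$ the functions on that factor, is case~(2); the quotient of a product with $\mathbb{R}$ is case~(3); and a one-dimensional extension is case~(4). In the last three cases we have displayed $\mathring{W}(q)=W(q-\lambda g)$ with $\lambda\in\mathbb{R}$, which is the ``moreover'' assertion, and by the dichotomy these are precisely the situations in which $(M,g)$ is not of type~(1). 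I expect the only genuine work to lie in the middle paragraph---showing that the a priori merely smooth $\lambda$ is constant and that $q'$ thereby inherits $G$-invariance; the rest is an assembly of Theorems~\ref{Thm:Essential}, \ref{thm:Takagi}, and~\ref{Thm:W}.
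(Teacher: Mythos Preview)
Your proof is correct and follows essentially the same route as the paper: split on essential versus inessential, invoke Theorem~\ref{Thm:Essential} together with Takagi's classification in the essential case, and in the inessential case identify $q'=q-\lambda g$, show $\lambda$ is constant by $G$-invariance, and apply Theorem~\ref{Thm:W}. The paper packages your middle paragraph as a separate proposition (the $\mathring{W}$ analogue of Proposition~\ref{Prop:TF-simple} together with Proposition~\ref{Prop:q-vs-W-inv}), but the content is the same.
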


In the case of  Ricci almost  solitons, Corollary \ref{cor:oF-structure} 
already follows from \cite[Theorem 1.1]{Homogen-Almost-Ricci}. For almost Einstein metrics, Corollary \ref{Corollary:oW-structure} generalizes Theorem 5.2 in \cite{Leitner} to the non-compact case. In dimension $4$, homogeneous
conformally Einstein spaces were classified in \cite{4d-Conformal} where is it
shown that if a space is not a symmetric space, then it is one of
three families of one-dimensional extensions. In higher dimensions, Corollary \ref{Corollary:oW-structure} reduces the problem of classifying homogeneous almost Einstein spaces and thus conformally Einstein spaces to studying one-dimensional extensions. We discuss this case further in  section \ref{Section-QE}, where we also
discuss the application of Corollary \ref{Corollary:oW-structure}
to more general ``generalized $m$-quasi-Einstein metrics."

As a final application of the theorems above, we consider the case
of a compact locally homogeneous manifold admitting non-trivial
functions in $F$, $\mathring{F}$, $W$, or $\mathring{W}$ for a
local isometry invariant $q$. First note that $F(q)$ can never be
non-trivial because if $f \in F(q)$  then $\Delta f = \tr q$ and $\tr q$ is constant as $q$ is a local isometry invariant tensor.  A function on a compact manifold with constant Laplacian is constant, so $f$ is constant. On the other hand,
the sphere supports invariant tensors $q$ such that  $\mathring{F}$, $W$ and $\mathring{W}$ all non-trivial.
In this case we get the following rigidity result. The proof follows
from inspecting the possibilities for simply connected examples in
Corollaries \ref{cor:oF-structure} and \ref{Corollary:oW-structure}
to admit nontrivial $\mathring{F}$, $W$ and $\mathring{W}$ that
are invariant under co-compact actions of deck transformations.

\begin{theorem} \label{Thm:CompactIntro} Suppose that $(M,g)$ is a compact locally homogeneous
manifold and $q$ a local isometry invariant symmetric two tensor. 
\begin{enumerate}
\item If $\mathring{F}(q)$ contains a non-constant function, then $(M,g)$
is a spherical space form. 
\item If $\mathring{W}(q)$ is non-trivial, then either $(M,g)$ is isometric to a direct product
of a homogeneous space $N$ and a spherical space form, isometric to $(N\times \mathbb{R})/\pi_1(M)$, or isometric
to $(S^{n-1}(\kappa)\times\bbR^{1})/\Gamma$. 
\end{enumerate}
In particular any positive function in  $\mathring{F}(q)$ or $\mathring{W}(q)$  must be constant. 
\end{theorem} Note that in the statement of part (2) we allow $N$
to be a point, so that the space could be isometric to a spherical
space form.

The paper is organized as follows.  In the next section we discuss preliminaries including the basic algebraic structure of the spaces $F$ and $W$ and the rigidity theorems for homogeneous spaces which we use to prove the structure theorems.  In the next four sections we prove the results for $F$, $\mathring{F}$, $W$, and $\mathring W$.  In the final section we discussion the application of the results to conformally Einstein and generalized $m$-Quasi Einstein metrics.  We also include an appendix with a discussion of of these spaces of functions on K\"ahler manifolds. 

\section{Preliminaries}

In this section we discuss some basic properties about the spaces
of functions $F(M,g,q)$, $\mathring{F}(M,g,q)$, $W(M,g,q)$, and
$\mathring{W}(M,g,q)$ as well as some rigidity results for homogeneous
spaces that will be the main tools in the proofs of our structure
theorems.

\subsection{Basic Structure}

First note that the spaces of functions  $F$ and $\mathring{F}$ are affine as $f_{1},f_{2}\in F$
(resp, $\mathring{F}$) implies $f_{1}-f_{2}\in V$ (resp, $\mathring{V}$
), where 
\begin{align*}
V & =\{v\mid\mathrm{Hess}v=0\}\\
\mathring{V} & =\{v\mid\mathring{\mathrm{Hess}v}=0\}.
\end{align*}
Both $V$ and $\mathring{V}$ are vector spaces of functions that
contain the constant functions. Moreover, it is well known that if
$V$ or $\mathring{V}$ contain a non-constant function, then the
metric must be special. If there is a non-constant function $v\in V$,
then $(M,g)$ must split as a product with a Euclidean factor, and
$v$ is a coordinate function in the Euclidean direction (See Proposition \ref{Prop:Vsplitting}). If there
is a non-constant function $v\in\mathring{V}$, then $(M,g)$ must
split as a warped product over a 1-dimensional base.  This was first
proven locally by Brinkmann \cite{Brinkmann} and later globally by
Tashiro \cite{Tashiro}. The complete study of the full space $\mathring{V}$
is due to Osgood-Stowe \cite{OS}.

The spaces $W$ and $\mathring{W}$ are vector spaces of functions.
In fact, note that $V$ and $\mathring{V}$ are special cases of $W$
and $\mathring{W}$ where $q=0$. Rigidity for metrics which admit
linearly independent solutions in $W$ was studied in \cite{He-Petersen-Wylie} (Also see Theorem \ref{Thm:WPR} below).
It gives a weaker warped product splitting than for $V$ or $\mathring{V}$.

A tensor $q$ is invariant under a subgroup, $G$, of isometries
of $(M,g)$, if  $\gamma^{*}q=q$ for all $\gamma \in G$.  If $q$ is invariant under $G$, then $G$ acts on the spaces $F$, $\mathring{F}$,
$W$, and $\mathring{W}$ via $f\mapsto\gamma^{*}f$, $\gamma\in G$.
Conversely, we also have that if $F$ or $W$ is invariant under the action of $G$ then so is $q$. 

\begin{proposition} \label{Prop:q-vs-W-inv} If $F(M,g,q)$ or $W(M,g,q)$
are nontrivial and invariant under the action of $G\subset\mathrm{Isom}(M,g)$,
then $q$ is also invariant under G. \end{proposition}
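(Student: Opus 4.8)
The plan is to exploit naturality of the Hessian under isometries: for any $\gamma\in G$ and any smooth function $h$ on $M$ one has $\mathrm{Hess}(\gamma^{*}h)=\gamma^{*}(\mathrm{Hess}\,h)$. With this the case of $F$ is immediate. Since $F(M,g,q)$ is nontrivial, fix some $f\in F(M,g,q)$. For $\gamma\in G$, invariance of $F$ gives $\gamma^{*}f\in F(M,g,q)$, so $\mathrm{Hess}(\gamma^{*}f)=q$; on the other hand the naturality identity gives $\mathrm{Hess}(\gamma^{*}f)=\gamma^{*}q$, and comparing the two yields $\gamma^{*}q=q$.

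For $W$, fix a nonzero $w\in W(M,g,q)$. For $\gamma\in G$, invariance of $W$ gives $\gamma^{*}w\in W(M,g,q)$, hence $\mathrm{Hess}(\gamma^{*}w)=(\gamma^{*}w)\,q$, while naturality gives $\mathrm{Hess}(\gamma^{*}w)=\gamma^{*}(wq)=(\gamma^{*}w)(\gamma^{*}q)$. Subtracting, $(\gamma^{*}w)\,(\gamma^{*}q-q)=0$ at every point of $M$, so $\gamma^{*}q=q$ on the open set $\{\gamma^{*}w\neq 0\}$. It then remains to see that this set is dense, after which $\gamma^{*}q=q$ on all of $M$ by continuity.

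The only nontrivial point is therefore a unique continuation statement: the zero set of a nonzero $w\in W(M,g,q)$ has empty interior. To see this, restrict the equation along geodesics: if $\sigma$ is a unit-speed geodesic then $(w\circ\sigma)''=\big(q(\dot\sigma,\dot\sigma)\circ\sigma\big)\,(w\circ\sigma)$, a linear second-order ODE, so if $w(p)=0$ and $\nabla w(p)=0$ then $w\equiv 0$ along every geodesic issuing from $p$; since these geodesics fill a neighborhood of $p$ via $\exp_{p}$, $w$ vanishes on a neighborhood of $p$. Now let $A$ be the interior of $\{w=0\}$. If $p\in\overline{A}$, then $w(p)=0$ and $\nabla w(p)=0$ by continuity, hence $p\in A$ by the previous sentence; thus $A$ is closed as well as open, and since $M$ is connected and $w\not\equiv 0$ we conclude $A=\emptyset$. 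I expect no serious obstacle beyond cleanly organizing this unique continuation step; the rest is a direct consequence of naturality of the Hessian.
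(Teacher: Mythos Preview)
Your proof is correct and follows essentially the same approach as the paper: both use naturality of the Hessian to obtain $(\gamma^{*}w)(\gamma^{*}q-q)=0$ and then invoke density of the set $\{\gamma^{*}w\neq 0\}$. The only difference is that the paper cites \cite[Proposition 1.1]{He-Petersen-Wylie} for the unique continuation fact, whereas you supply a clean self-contained proof of it via the second-order ODE along geodesics.
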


\begin{proof} We consider the case where $W$ is invariant. The case
for $F$ is similar. Fix a nontrivial $w\in W$ and $\gamma\in G$.
We have:

\begin{align*}
(w\circ\gamma)q & =\mathrm{Hess}(w\circ\gamma)=\gamma^{*}\mathrm{Hess}w=\gamma^{*}(wq)=(w\circ\gamma)(\gamma^{*}q).
\end{align*}
This shows that $\gamma^{*}q=q$ wherever $w\circ\gamma\neq0$. Since
this is a set of full measure unless $w\equiv0$ (see \cite[Proposition 1.1]{He-Petersen-Wylie}) we conclude that
$q$ is $\gamma$ invariant. \end{proof}

\subsection{Some rigidity results on homogeneous spaces}

In this section we discuss some rigidity results for certain functions
and vector fields on homogeneous spaces.  We first recall the algebraic formulation
of the rigidity we require  from the introduction. 

\begin{definition} A $G$-homogeneous space $(M=G/G_{x},g)$ is called
a one-dimensional extension if there is a closed subgroup, $H \subset G$ that contains $G_x$ such that  there is a surjective Lie group homomorphism from $G$ to the additive
real numbers whose kernel is $H$. \end{definition}

This algebraic property has the following geometric consequences.

\begin{proposition} \label{Def:Semi-Split} If a $G$-homogeneous
space $(M=G/G_{x},g)$ is a one-dimensional extension of $H$, then 
\begin{enumerate}
\item $G$ acts on $M$ as a semi-direct product group $G=H\rtimes\mathbb{R}$. 
\item $M$ is diffeomorphic to $\left(H/G_{x}\right)\times\mathbb{R}$, 
\item $g=g_{r}+dr^{2}$ where $g_{r}$ is a one-parameter family of homogeneous
metrics on $H/G_{x}$, 
\end{enumerate}
\end{proposition}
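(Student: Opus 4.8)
The plan is to unpack the algebraic hypothesis into a concrete fibration and then transport the geometry along it. Let $\phi\colon G\to\mathbb{R}$ be the given surjective homomorphism with kernel $H$, a closed subgroup containing $G_x$. Since $\mathbb{R}$ is simply connected and abelian, I would first choose a one-parameter subgroup $\sigma\colon\mathbb{R}\to G$ that splits $\phi$, i.e. $\phi\circ\sigma=\id_{\mathbb{R}}$: this exists because one can pick any $X$ in the Lie algebra $\mathfrak{g}$ with $d\phi(X)=1$ and set $\sigma(t)=\exp(tX)$, and then $\phi(\sigma(t))=t$ since both sides are homomorphisms $\mathbb{R}\to\mathbb{R}$ agreeing to first order. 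The image $\mathbb{R}\cong\sigma(\mathbb{R})$ meets $H=\ker\phi$ only in the identity, and $G=H\cdot\sigma(\mathbb{R})$ because any $\gamma\in G$ can be written $\gamma=(\gamma\sigma(\phi(\gamma))^{-1})\cdot\sigma(\phi(\gamma))$ with the first factor in $\ker\phi$. Since $H$ is normal (it is a kernel) and $H\cap\sigma(\mathbb{R})=\{e\}$, this exhibits $G=H\rtimes\mathbb{R}$ as an internal semidirect product, giving (1). The action on $M$ is then by this semidirect product simply by restriction of the given $G$-action.

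For (2), note that $G_x\subset H$, so the orbit map gives a $G$-equivariant fibration $M=G/G_x\to G/H\cong\mathbb{R}$, where the base identification is induced by $\phi$ and the fiber over the identity coset is $H/G_x$. Because the base $\mathbb{R}$ is contractible, this bundle is trivial; more concretely, the splitting $\sigma$ furnishes an explicit diffeomorphism $(H/G_x)\times\mathbb{R}\to M$, $(hG_x,t)\mapsto \sigma(t)h\cdot x$ (or $h\sigma(t)\cdot x$ — one must be careful which side, but either choice works after adjusting by the semidirect-product twisting), whose inverse sends $p\in M$ to the pair consisting of its base coordinate $\phi$-value and the point obtained by flowing back along $\sigma$. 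This is where I would be most careful: checking this map is well-defined on $G_x$-cosets and bijective uses exactly that $G_x\subset H$ and $H\cap\sigma(\mathbb{R})=\{e\}$, together with transitivity of $G$ on $M$.

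For (3), let $r\colon M\to\mathbb{R}$ be the base projection just constructed. The one-parameter subgroup $\sigma(\mathbb{R})$ acts on $M$ by isometries (it sits inside $G$), and under the diffeomorphism of (2) it acts as translation in the $\mathbb{R}$-factor; hence the vector field $\partial_r$ generating this flow is a Killing field, so $r$ has $\|\nabla r\|$ constant along each slice — after normalizing $X$ we may take $|\nabla r|\equiv 1$, and then $\nabla r=\partial_r$ and the metric splits orthogonally as $g=dr^2+g_r$ with $g_r$ a metric on each slice $\{r=\text{const}\}\cong H/G_x$. It remains to see each $g_r$ is $H$-homogeneous: $H$ preserves each slice (since $\phi$ is $H$-invariant, $H$ acts fiberwise) and acts transitively on it (since $H\cdot x$ is the whole fiber $H/G_x$), and it acts by isometries, so $g_r$ is $H$-homogeneous for every $r$. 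The one genuine subtlety is the normalization making $\partial_r$ unit-length: a priori $|\nabla r|$ is only slice-constant by homogeneity, but since $X$ was an arbitrary lift of $1\in\mathbb{R}$ one can rescale $X\mapsto cX$ and correspondingly reparametrize $\mathbb{R}$ to arrange $|\nabla r|\equiv 1$; I expect this bookkeeping, rather than any deep point, to be the main thing to get right.
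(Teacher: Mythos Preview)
Your approach is essentially the paper's: both construct the quotient map $r\colon M\to M/H\cong\mathbb{R}$, arrange for $r$ to be a distance function, and read off the metric splitting from that. Your splitting $\sigma(t)=\exp(tX)$ is exactly the ``one-parameter family $\gamma_t$'' the paper invokes for (1), and your bundle argument for (2) matches the paper's cohomogeneity-one observation.

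There is, however, one genuine slip in your step (3). After normalizing so that $|\nabla r|\equiv 1$, you assert $\nabla r=\partial_r$. This does not follow: $\partial_r$, the Killing field generating the $\sigma$-flow, satisfies $dr(\partial_r)=1$, but nothing forces it to be orthogonal to the $H$-orbits. In general $\partial_r=\nabla r+T$ with $T$ tangent to the slices, so in the product coordinates coming from \emph{your} diffeomorphism $(hG_x,t)\mapsto\sigma(t)h\cdot x$ the metric may well have cross terms $dr\otimes\theta+\theta\otimes dr$. Rescaling $X$ cannot fix this, since that only adjusts the length of $\partial_r$, not its direction relative to the slices.

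The repair is painless and is what the paper does implicitly: once $|\nabla r|\equiv 1$ (which you correctly obtain from $G$-homogeneity), $r$ is a genuine distance function, so standard theory gives $g=dr^2+g_r$ where the product structure is induced by the flow of $\nabla r$, not of $\sigma$. Since the level sets of $r$ are still the $H$-orbits $\cong H/G_x$, this furnishes both (2) and (3); you simply have to let the distance function, rather than $\sigma$, dictate the trivialization. The paper phrases this as ``by re-parametrizing the range we can assume $r$ is a distance function,'' which sidesteps the issue entirely.
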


\begin{proof} Let $\phi:G\to\mathbb{R}$ be a surjective Lie group
homomorphism with kernel $H$. Since $G_{x}\subset H$ it follows
that $M/H=(G/G_{x})/H=G/H=\mathbb{R}$. Therefore, the action of $H$
on $M$ has cohomogeneity one. Let $r:M\to M/H$. By re-parametrizing
the range, $M/H$, we can assume that $r$ is a distance function.
$H$ acts transitively on the level sets of $r$, which gives the
diffeomorphic splitting (2) as well as the metric of the form (3).

To see (1), let $\gamma_{t}$ be a one-parameter family of isometries
in $G$. It follows that $t\mapsto\phi(\gamma_{t})$ is an additive
group homomorphism from $\mathbb{R}$ to $\mathbb{R}$ and thus either
trivial or an isomorphism. Since $\phi$ is assumed to be surjective,
we can find a $\gamma_{t}$ such that this map is an isomorphism.
Let $\gamma\in G$. There is $t$ such that $\phi(\gamma_{-t})=\phi(\gamma)$,
which implies that $\gamma_{t}\circ\gamma\in H$. This shows that
$G$ is a semi-direct product group $G=H\rtimes\mathbb{R}$.

\end{proof}
Now we are ready to prove the main Lemma which we use to show that spaces are one-dimensional extensions.  It roughly says that when there is function which is ``almost" invariant by a transitive
group in the sense that it changes only by an additive or multiplicative
constant, then we obtain a one-dimensional extension.

\begin{lemma} \label{Lem:ssplit-rigid} Let $M$ be a $G$-homogeneous
space, assume that either 
\begin{enumerate}
\item there is a non-constant function $f$ such that for all $\gamma\in G$
there is $C_{\gamma}\in\mathbb{R}$ so that 
\[
\gamma^{*}f=f+C_{\gamma},\textrm{or}
\]
\item there is a non-constant function $w$ such that for all $\gamma\in G$
there is $C_{\gamma}\in\mathbb{R}$ so that 
\[
\gamma^{*}w=C_{\gamma}w.
\]
\end{enumerate}
In either case $(M,g)$ becomes a one-dimensional extension of $H$,
the subgroup of $G$ that fixes the function $f$ or $w$. Moreover,
in case (1) $f=ar+b$ and in case (2) $w=be^{ar}$ for some $a,b\in\mathbb{R}$.
\end{lemma}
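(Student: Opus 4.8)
The plan is to treat cases (1) and (2) in parallel, reducing both to Proposition~\ref{Def:Semi-Split} by constructing the required surjective homomorphism $\phi:G\to\mathbb{R}$. In case (1), define $\phi(\gamma)=C_\gamma$; in case (2), we should first observe that all values $C_\gamma$ are positive (since $w$ and $w\circ\gamma$ have the same sign on the complement of a measure-zero set by \cite[Proposition 1.1]{He-Petersen-Wylie}, or more simply because $\gamma$ is an isometry and $w\circ\gamma^{-1}=C_\gamma^{-1}w$ forces $C_\gamma\neq 0$, and connectedness of $G$ — or a continuity/degree argument — rules out negative values), so we may define $\phi(\gamma)=\log C_\gamma$. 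In either case the cocycle-type identity $\gamma_1^*\gamma_2^* f = f + (C_{\gamma_1}+C_{\gamma_2})$ (resp.\ the multiplicative analogue) shows $\phi$ is a group homomorphism, and it is smooth because $C_\gamma$ depends smoothly on $\gamma$ (it is recovered as $f(\gamma\cdot x)-f(x)$ for a fixed basepoint $x$, which is a smooth function of $\gamma$).

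Next I would check surjectivity and that the kernel is exactly $H$. The kernel of $\phi$ is $\{\gamma: C_\gamma=0\}$ in case (1), which is precisely the set of $\gamma$ with $\gamma^*f=f$, i.e.\ the stabilizer $H$ of the function $f$; similarly in case (2) $\ker\phi=\{\gamma:C_\gamma=1\}=H$. For surjectivity: since $f$ (resp.\ $w$) is non-constant and $G$ acts transitively, there exist points $x,y=\gamma\cdot x$ with $f(x)\neq f(y)$, hence $C_\gamma\neq 0$, so the image of $\phi$ is a nontrivial subgroup of $\mathbb{R}$; being the continuous image of the connected group $G$ it is a connected subgroup, hence all of $\mathbb{R}$. (One should note $G_x\subset H$ since $G_x$ fixes $x$ hence acts trivially on the orbit through $x$... more carefully, $G_x$ fixes $f$ because... actually this needs the observation that $f$ itself is $G_x$-invariant, which follows since $f$ is a function on $M=G/G_x$, so any $g\in G_x$ satisfies $g^*f=f$, i.e.\ $C_g=0$, so $G_x\subset\ker\phi=H$.) Thus $H$ is a closed subgroup containing $G_x$ with $G/H\cong\mathbb{R}$ via $\phi$, so $(M,g)$ is a one-dimensional extension of $H$ by definition, and Proposition~\ref{Def:Semi-Split} gives the geometric splitting $g=dr^2+g_r$.

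Finally I would identify the explicit form of $f$ and $w$. From Proposition~\ref{Def:Semi-Split}, $M\cong \mathbb{R}\times(H/G_x)$ with $r$ a distance function and $H$ acting transitively on each level set $\{r=\text{const}\}$. Since $H=\ker\phi$ fixes $f$ (resp.\ $w$), the function $f$ (resp.\ $w$) is constant on $H$-orbits, hence is a function of $r$ alone: $f=f(r)$, $w=w(r)$. Now the one-parameter subgroup of $G$ realizing a section of $\phi$ (translations in the $r$-direction) acts on $f$ by $f(r+t)=f(r)+ \phi(\gamma_t)= f(r)+ct$ for the appropriate normalization, which forces $f$ to be affine in $r$, $f=ar+b$; in case (2) the same argument gives $w(r+t)=e^{ct}w(r)$, so $w=be^{ar}$. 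The main obstacle I anticipate is the positivity of the multiplicative cocycle $C_\gamma$ in case (2) — ruling out the sign-flip possibility $C_\gamma<0$ cleanly — and the smoothness of $\phi$ needed to invoke the Lie-group statement of Proposition~\ref{Def:Semi-Split}; both are handled by evaluating at a fixed basepoint, but the positivity also genuinely uses connectedness of $G$ (or that $G$ acts effectively and an orientation/continuity argument on $w\circ\gamma$), which is worth stating carefully. Everything else is a formal unwinding of the definitions together with the already-established Proposition~\ref{Def:Semi-Split}.
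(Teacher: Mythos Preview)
Your approach is essentially the same as the paper's: construct the homomorphism $\phi(\gamma)=C_\gamma$ (resp.\ $\log C_\gamma$), identify its kernel as $H$, check $G_x\subset H$, invoke the one-dimensional extension structure, and then argue that $f$ (resp.\ $w$) depends only on $r$ and has the stated form.

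Two points where the paper's execution is slightly cleaner than yours. First, for the positivity of $C_\gamma$ in case~(2), you do not need connectedness of $G$ or a continuity argument: the relation $\gamma^*w=C_\gamma w$ shows that $G$ preserves the zero set of $w$, so by transitivity either $w\equiv 0$ or $w$ never vanishes; in the latter case $w$ has a fixed sign on the connected manifold $M$, and then $C_\gamma=w(\gamma^{-1}p)/w(p)>0$ automatically. This is simpler and more robust than the arguments you sketch. Second, for the final identification of $f$ and $w$, the paper observes directly that $\nabla f$ (resp.\ $\nabla w/w$) is $G$-invariant, hence has constant norm, giving $f'(r)=\text{const}$ (resp.\ $w'(r)/w(r)=\text{const}$). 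Your translation-subgroup argument $f(r+t)=f(r)+ct$ is also correct but requires a small extra check that the section of $\phi$ really acts by $r$-translation, which follows from Proposition~\ref{Def:Semi-Split} but is one more thing to verify.
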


\begin{proof} First consider case (1). The assumption $\gamma^{*}f=f+C_{\gamma}$,
gives a homomorphism $\gamma\mapsto C_{\gamma}$ into the additive
real numbers with kernel $H=\{\gamma\in G\mid\gamma^{*}f=f\}$. To
see that $G_{x}\subset H$ note that if $\gamma(x)=x$, then $\gamma^{*}f(x)=f(x)$
implying that $C_{\gamma}=0$. Observe that the image of $\gamma\mapsto C_{\gamma}$
is either trivial or $\bbR$ and in case it is trivial $f$ is forced
to be constant. Therefore, we have a one-dimensional extension of
$H$ and the diffeomorphic splitting $M=H/G_{x}\times\mathbb{R}$
with metric $g=g_{r}+dr^{2}$. As $f$ is invariant under $H$ we
must have $f=f(r)$, $\nabla f=f'(r)\nabla r$. Since the group $G$
preserves $\nabla f$ this implies that $f'(r)$ is constant, so $f=ar+b$
for a constants $a,b\in\mathbb{R}$. This completes case (1).

Case (2) is similar. Since $\gamma^{*}w=C_{\gamma}w$, the action
of $G$ preserves both the zeros and the critical points of $w$.
Since $G$ is transitive and $w$ is non-constant we must have that
$w$ has no zeros nor critical points so, by possibly switching to $-w$, we can assume
that $w$ is positive. The map $\gamma\mapsto C_{\gamma}$ is a group
homomorphism into the multiplicative group of positive real numbers.
But then $\mathrm{ln}(C_{\gamma})$ gives a homomorphism into the
additive reals whose kernel consists of the isometries that preserve
$w$. We then obtain $M=H/G_{x}\times\mathbb{R}$ with metric $g=g_{r}+dr^{2}$
and $w=w(r)$.

To see that $w=be^{ar}$ consider that any isometry $\gamma$ preserves
the vector field $\frac{\nabla w}{w}$ as 
\[
d\gamma\left(\frac{\nabla w}{w}(\gamma^{-1}x)\right)=\frac{d\gamma(\nabla w(\gamma^{-1}x))}{w(\gamma^{-1}x)}=\frac{C_{\gamma}\nabla w(x)}{C_{\gamma}w(x)}=\frac{\nabla w}{w}(x).
\]
So $|\nabla w|/w=|w'(r)|/w(r)$ is constant and so $w=be^{ar}$ for
some $a,b\in\mathbb{R}$. \end{proof}


Finally in this section we prove a fact about conformal fields on
homogeneous spaces. Recall that a vector field $X$ is a conformal
field if $\mathring{L_{X}g}=0$ which is equivalent to the 1-parameter
family of (local) diffeomorphisms generated by $X$ being conformal diffeomorphisms
of $g$. We have the following rigidity for conformal fields on homogeneous
spaces. This result was established and used in \cite[Proof of Theorem 1.1]{Homogen-Almost-Ricci}, 
but the resulting formula there does not appear to be entirely correct.

\begin{proposition} \label{Prop:CField} Let $(M,g)$ be a homogeneous
space and $X$ a conformal field, then either $(M,g)$ is locally
conformally flat, or $X$ is a Killing field. \end{proposition}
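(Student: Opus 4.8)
The plan is to analyze the flow $\varphi_t$ generated by the conformal field $V$ and extract, at each point, a scalar that records how $\varphi_t$ rescales the metric, then use homogeneity to promote pointwise information to global information. Writing $\mathring{L_V g}=0$ means $L_V g = \frac{2}{n}(\div V)\, g$, so the 1-parameter group $\varphi_t$ consists of conformal diffeomorphisms: $\varphi_t^* g = e^{2\sigma_t} g$ for some function $\sigma_t$ on $M$ with $\frac{d}{dt}\big|_{t=0}\sigma_t = \frac{1}{n}\div V =: u$. The key structural input is that $u = \frac{1}{n}\div V$ is a \emph{globally defined smooth function} built from $V$ and $g$, so for any isometry $\gamma$ of $(M,g)$ we have $\gamma^* u = \frac{1}{n}\div(d\gamma^{-1} V)$; in particular if we already knew $V$ were $G$-invariant we would be done, but that is exactly what we cannot assume. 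Instead I would work directly with $u$ and the conformal-field equations it satisfies.

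The heart of the argument is the classical fact that for a conformal field $V$ on any Riemannian manifold the potential $u=\frac1n \div V$ satisfies a second-order equation coupling $\hess u$ to the Ricci curvature (the integrability condition for conformal fields): schematically $\hess u = \frac{\Delta u}{n} g - \text{(Ricci terms)}$, and moreover $V$ is Killing precisely when $u$ is constant. So the dichotomy in the statement is really: either $u$ is constant (hence $V$ Killing), or $u$ is a non-constant function of a very rigid type. Here I would invoke the machinery already set up in the paper. On a homogeneous space the Ricci tensor is $G$-invariant, and the conformal integrability condition expresses $\mathring{\hess} u$ in terms of the $G$-invariant tensor $\mathring\Ric$ together with $u$ itself. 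If one can arrange this in the form $\mathring{\hess} u = u\, \mathring q$ or $\mathring{\hess} u = \mathring q$ for a $G$-invariant $q$ — which is what the conformal field equations deliver, after absorbing lower-order terms — then $u \in \mathring F(q)$ or $\mathring W(q)$ for $G$-invariant $q$, and Theorem~\ref{Thm:Essential} forces $(M,g)$ to be locally conformally flat unless the relevant space is inessential. In the inessential case one falls back to $V$ or $\mathring V$, Brinkmann--Tashiro splitting, and a direct check; and a space that splits off a line as a warped product and is homogeneous is again either locally conformally flat or forces $u$ constant. Either way we land on the stated alternative.

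The step I expect to be the main obstacle is making the reduction to an \emph{honest} $\mathring F$ or $\mathring W$ equation with a genuinely $G$-invariant right-hand side: the naive conformal integrability identity for $\hess u$ involves not just $\Ric$ but also a term of the form $\nabla_V(\text{something})$ or $V\cdot \scal$, i.e.\ it is not a priori expressible purely in terms of $u$ and $G$-invariant tensors without already knowing $V$ respects the homogeneous structure. I would handle this by first showing that $\scal$ is constant (true on any homogeneous space) so that the offending scalar terms drop out, and by differentiating the relation $\varphi_t^* g = e^{2\sigma_t} g$ twice to get a clean equation for $u$ alone. A secondary subtlety is that $u$ could have zeros or critical points, where the passage from $\mathring W$ to the global product structure needs care; this is exactly the kind of full-measure argument already used in Proposition~\ref{Prop:q-vs-W-inv} and in Lemma~\ref{Lem:ssplit-rigid}, so I would cite that circle of ideas rather than redo it. Once $u\in \mathring W(q)$ (or $\mathring F(q)$) for $G$-invariant $q$ is established, the rest is an appeal to the already-proven rigidity theorems plus Takagi's classification to pin down local conformal flatness, with the $u$-constant case yielding the Killing alternative.
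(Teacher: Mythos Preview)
Your proposal has a fatal circularity. You plan to feed $u=\frac{1}{n}\div V$ into the machinery of $\mathring F(q)$ or $\mathring W(q)$ and then invoke Theorem~\ref{Thm:Essential} to conclude local conformal flatness. But look at how Theorem~\ref{Thm:Essential} is actually proved in the paper: both Theorem~\ref{Thm:EssentialF} and Lemma~\ref{Lemma:oWEssential} produce a non-Killing conformal field (namely $\nabla w$ for $w\in\mathring V\setminus V$, or $w_1\nabla w_2-w_2\nabla w_1$) and then \emph{apply Proposition~\ref{Prop:CField}} to deduce local conformal flatness. So Proposition~\ref{Prop:CField} is logically upstream of Theorem~\ref{Thm:Essential}, not downstream; you cannot use the latter to prove the former.

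Even setting the circularity aside, the step you flag as the ``main obstacle'' is a genuine gap, not a technicality. The conformal-field integrability condition (Yano's formula) reads $L_V\Ric=-(n-2)\hess u-(\Delta u)g$, so the traceless Hessian of $u$ is governed by $L_V\mathring\Ric$, not by $\mathring\Ric$ or $u\,\mathring\Ric$. Since $V$ is only conformal, $L_V\mathring\Ric$ has no reason to be $G$-invariant or to equal $u$ times something $G$-invariant, and the constancy of $\scal$ does not rescue this. You never get an equation of the form $\mathring{\hess}\,u=\mathring q$ or $\mathring{\hess}\,u=u\mathring q$ with $G$-invariant $q$.

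The paper's argument avoids all of this by working directly with the conformally invariant $(1,3)$-tensor $C$ (Weyl in dimension $\geq 4$, Cotton in dimension $3$). Conformal invariance gives $L_VC=0$, and a short coordinate computation yields $D_V|C|^2=-2\,\tr(L_Vg)\,|C|^2$. Homogeneity makes $|C|^2$ constant, so either $\tr(L_Vg)\equiv 0$ (and $V$ is Killing) or $|C|^2=0$ at one point, hence everywhere by homogeneity, and $(M,g)$ is locally conformally flat. This is both more elementary and logically independent of the $F/W$ machinery.
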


\begin{proof} All two-dimensional spaces are locally conformally
flat, so there is nothing to prove in this case. In dimensions larger
than $2$ there is always a conformally invariant $(1,3)$ tensor, $C$,
on $(M,g)$ such that $C=0$ if and only if $(M,g)$ is locally conformally flat. In dimension $3$ it is the Cotton tensor, in higher
dimensions the Weyl tensor.  

The conformal invariance of $C$ implies that $L_{X}C=0$ as $X$ is
a conformal field. We claim that $D_{X}|C|^{2}=-2\mathrm{tr}(L_{X}g)|C|^{2}$.
To see this consider a point $p\in M$ where $V(p)\neq0$ and select
coordinates $x^{1},\dots,x^{n}$ such that $V=\partial_{1}$. The
Lie derivative of any tensor can now be calculated by computing the
directional derivatives of the components of the tensor in these coordinates.
With this in mind it follows that the components of the metric tensor
satisfy: $D_{X}g_{ij}=\mathrm{tr}(L_{X}g)g_{ij}$ and its inverse:
$D_{X}g^{ij}=-\mathrm{tr}(L_{X}g)g^{ij}$, while $D_{X}C_{ijk}^{l}=0$.
We can now calculate

\begin{align*}
D_{X}|C|^{2} & =D_{X}(g^{is}g^{jt}g^{ku}g_{lv}C_{ijk}^{l}C_{stu}^{v})\\
 & =(-3\mathrm{tr}(L_{X}g)+\mathrm{tr}(L_{X}g))(g^{is}g^{jt}g^{ku}g_{lv}C_{ijk}^{l}C_{stu}^{v})\\
 & =-2\mathrm{tr}(L_{X}g))(g^{is}g^{jt}g^{ku}g_{lv}C_{ijk}^{l}C_{stu}^{v}).
\end{align*}

Finally, the formula trivially holds on any open set where $X$ vanishes.
(In fact, a non-trivial conformal field cannot vanish on an open set as its zero
set has components that are either points or totally umbilic hypersurfaces.)
So the formula $D_{X}|C|^{2}=-2\mathrm{tr}(L_{X}g)|C|^{2}$ must hold
globally.

Since the space is homogeneous, $|C|^{2}$ is constant, so either
$\mathrm{tr}(L_{X}g)=0$ everywhere, and the field is Killing, or
there is a point where $|C|^{2}=0$. However, again by homogeneity,
if $C=0$ at a point then $C=0$ everywhere and then the space is locally
conformally flat. \end{proof}

Finally in this section we point out that locally conformally flat
homogeneous spaces have a rigid classification due to Takagi.

\begin{theorem}\cite[Theorem B]{Takagi} \label{thm:Takagi}Let $(M^{n},g)$ be a
homogeneous space which is locally conformally flat, then $(M,g)$
is isometric to either $S^{n}(\kappa)/\Gamma$, $\bbR^{n}/\Gamma$,
$H^{n}(-\kappa)$, $(S^{k}(\kappa)/\Gamma)\times H^{n-k}(-\kappa)$,
$(\bbR^{1}/\Gamma)\times H^{n-1}(-\kappa)$, or $(S^{n-1}(\kappa)\times\bbR^{1})/\Gamma$.
\end{theorem}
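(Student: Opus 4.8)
The plan is to classify the complete simply connected homogeneous locally conformally flat (LCF) spaces and then recover the quotients. Since $(M,g)$ is homogeneous it is complete, and $\mathrm{Isom}_{0}(M)$ lifts to a transitive action on the universal cover $\tilde M$, so $\tilde M$ is again a complete simply connected homogeneous LCF space; I classify these and then write $M=\tilde M/\Gamma$ for a discrete group $\Gamma\subset\mathrm{Isom}(\tilde M)$ at the very end. The structural fact I would use throughout is that, for $n\ge 3$, the curvature of an LCF metric is the Kulkarni--Nomizu product of its Schouten tensor $P=\tfrac{1}{n-2}\bigl(\Ric-\tfrac{\scal}{2(n-1)}g\bigr)$ with $g$; equivalently, for orthonormal $X\perp Y$, $K(X,Y)=\tfrac{1}{n-2}\bigl(\Ric(X,X)+\Ric(Y,Y)\bigr)-\tfrac{\scal}{(n-1)(n-2)}$ (in dimension $\le 2$ every homogeneous space is a space form, so assume $n\ge 3$). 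If $\tilde M$ is Einstein this identity makes the sectional curvature constant, so $\tilde M$ is $S^{n}(\kappa)$, $\mathbb{R}^{n}$, or $H^{n}(-\kappa)$. Hence I may assume $\Ric$ is not proportional to $g$, so by homogeneity it has at least two distinct constant eigenvalues.

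The next step is to use that LCF forces the Cotton tensor to vanish (automatically from $W=0$ when $n\ge 4$, and as part of the hypothesis when $n=3$), which says precisely that $P$ is a Codazzi tensor: $(\nabla_{X}P)(Y,Z)=(\nabla_{Y}P)(X,Z)$. So $\tilde M$ carries a nontrivial Codazzi tensor with at least two constant eigenvalues, and the local structure theory for such tensors (Derdzinski), equivalently the classical local structure of conformally flat metrics, produces at least locally a decomposition adapted to the eigenspaces: a metric product when all eigenvalue multiplicities are $\ge 2$, and a warped product $dr^{2}+h(r)^{2}g_{F}$ over a one-dimensional base with constant-curvature fibre when some eigenvalue is simple. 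Here homogeneity does the decisive work: if $h$ were nonconstant the level sets $\{r=\text{const}\}$ would be totally umbilic with varying mean curvature $h'/h$, so every isometry would have to preserve them, contradicting transitivity, unless $h'/h$ is constant; and since the round sphere and hyperbolic space carry no nontrivial homotheties while flat space does, a constant nonzero $h'/h$ forces $\tilde M$ to be one of the space forms already listed. Therefore, in the non-Einstein case $\tilde M$ is a nontrivial Riemannian product, and, being complete and simply connected, its de~Rham decomposition $\tilde M=\mathbb{R}^{n_{0}}\times M_{1}\times\cdots\times M_{p}$ has $p\ge 1$ with each $M_{i}$ irreducible of dimension $\ge 2$.

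Now a short ``companion'' argument pins down the factors. If $X$ is a unit vector tangent to some $M_{i}$ and $Y$ a unit vector tangent to a different factor (a Euclidean direction is allowed), then $K(X,Y)=0$, so the LCF curvature identity gives $\Ric(X,X)+\Ric(Y,Y)=\tfrac{\scal}{n-1}$ for all such pairs; hence $\Ric(X,X)$ is constant over unit $X\in TM_{i}$ as soon as $M_{i}$ has any companion, so every factor $M_{i}$ is Einstein. Restricting the identity expressing $R$ as the Kulkarni--Nomizu product of $P$ and $g$ to $TM_{i}$ then shows the intrinsic curvature of $M_{i}$ is itself a Kulkarni--Nomizu product of a symmetric $2$-tensor with $g_{M_{i}}$, and together with the Einstein condition this forces $M_{i}$ to have constant curvature $\kappa_{i}$, necessarily $\kappa_{i}\ne 0$ (otherwise $M_{i}$ would be reducible). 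Feeding these constant Ricci eigenvalues back into the LCF curvature identity for mixed, within-factor, and Euclidean planes is then an elementary computation that forces $n_{0}\le 1$, $p\le 2$, and moreover $\kappa_{1}=-\kappa_{2}$ with $n_{0}=0$ when $p=2$, and $\dim M_{1}\in\{n-1,n\}$ when $p=1$. The complete simply connected homogeneous LCF spaces are therefore exactly $\mathbb{R}^{n}$, $S^{n}(\kappa)$, $H^{n}(-\kappa)$, $\mathbb{R}\times S^{n-1}(\kappa)$, $\mathbb{R}\times H^{n-1}(-\kappa)$, and $S^{k}(\kappa)\times H^{n-k}(-\kappa)$ for $2\le k\le n-2$.

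It remains to recover $M=\tilde M/\Gamma$, where $\Gamma$ acts freely by isometries and its normalizer in $\mathrm{Isom}(\tilde M)$ still acts transitively on $\tilde M$. By de~Rham uniqueness the isometry group of each of the products above is the product of the isometry groups of its factors, and the transitive-normalizer condition then forces $\Gamma$ to project trivially into the isometry group of any hyperbolic or nontrivial flat factor (which have trivial centre), while it may still involve the centre of $O(n)$ in a spherical factor; running through the six simply connected cases this yields precisely Takagi's list. I expect the real obstacle to be the reducibility step of the second paragraph: converting the Codazzi property of the Schouten tensor, together with homogeneity, into an honest metric splitting --- in particular ruling out nontrivial warping when a Ricci eigenvalue is simple, and absorbing the case $n=3$ where no Weyl tensor is available --- is where the substance of the theorem lies, whereas the product combinatorics of the third paragraph and the quotient bookkeeping here are routine.
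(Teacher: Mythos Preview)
The paper does not prove this theorem: it is quoted verbatim from Takagi's paper \cite{Takagi} and used as a black box throughout (in the proofs of Theorem \ref{Thm:EssentialF}, Corollary \ref{cor:div_rigid}, and the final lemma). So there is no ``paper's own proof'' to compare against; your proposal is an independent attempt at Takagi's result.

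As an outline your approach is the natural modern one---reduce to the simply connected case, use that for an LCF metric the Schouten tensor is Codazzi with constant eigenvalues by homogeneity, split via de~Rham/Derdzinski, and then do the product combinatorics---and the companion argument in your third paragraph is clean. Two places would need real work to make this a proof. First, the warping/reducibility step: Derdzinski's structure theory for Codazzi tensors with constant eigenvalues directly gives a parallel splitting only when all multiplicities are at least two; when a Schouten eigenvalue is simple you invoke a local warped product $dr^{2}+h(r)^{2}g_{F}$, but you should justify that the remaining eigendistribution is integrable and umbilic (this is where the Codazzi equation is actually used) and that the local model globalizes on a complete simply connected manifold. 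Your homothety argument then correctly forces $F$ to be flat when $h'/h$ is a nonzero constant. Second, the quotient step: the phrase ``trivial centre'' is not the right mechanism. What you need is that no nontrivial discrete subgroup of $\mathrm{Isom}(H^{m})$ has a normalizer acting transitively on $H^{m}$ (any such element is hyperbolic or parabolic, and its normalizer fixes an axis or an ideal point); this is what forces $\Gamma$ to project trivially onto every hyperbolic factor and yields exactly the $\Gamma$-placements in Takagi's list. With those two points filled in, your argument would stand on its own.
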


\section{$F$} \label{Section:F}

Now we begin the study of the space of solutions to (\ref{eqn 1.1}),  $F(M,g,q)$.  We start by offering two examples of spaces that typify situations
where $q$ is invariant under a group of isometries but not all the
functions in $F\left(q\right)$ are.

\begin{example} \label{example:Euclidean} Let $f:\mathbb{R}^{n}\to\mathbb{R}$
such that $f(x)=\frac{A}{2}|x|^{2}+L(x)+c$ where $A,C$ are constants
and $L:\mathbb{R}^{n}\to \mathbb{R}$ is a linear function. Then $\mathrm{Hess}f=Ag_{0}$
where $g_{0}$ denotes the Euclidean dot product. Clearly $\mathrm{Hess}f$
is invariant under the full isometry group, but $f$ is not. \end{example}

\begin{example} \label{example:Hyperbolic} Let $g=dr^{2}+e^{2kr}g_{0}$,
where $g_{0}$ is the Euclidean metric on $\mathbb{R}^{n-1}$. Then
$g$ is the Euclidean metric if $k=0$ and is Hyperbolic space if
$k\neq0$. Consider $f=cr$ and $G=\{\phi\mid\phi(r,x)=(r+a,e^{-ka}\tau(x)),$
where $a\in\mathbb{R}$ and $\tau\in\mathrm{Isom}(\mathbb{R}^{n-1})\}$.
In this case $G$ is a group of isometries of $g$ that acts transitively
and $\mathrm{Hess}f=cke^{2kr}g_{0}$ which is invariant under the
group $G$. \end{example}

Our results come from considering the cases when the dimension of
$V$ is one and  larger than one separately. When the dimension is one we have an almost trivial
action of a transitive group of isometries while, when the dimension is larger
than one, we have a rigidity result for the metric. Example \ref{example:Hyperbolic}
is in the case where $V$ is one dimensional and Example \ref{example:Euclidean}
is in the case where $V$ is higher dimensional.

Let us now be more precise. First in the case where $\mathrm{dim}(V)=1$,
we can apply Lemma \ref{Lem:ssplit-rigid}.

\begin{proposition} \label{Prop:C} Let $(M,g)$ be a $G$-homogeneous
manifold and let $q$ be a $G$-invariant symmetric two tensor.  If  $\mathrm{dim}(V)=1$ and $f \in F(q)$ is non-constant,
then $(M,g)$ is a one-dimensional extension and $f=kr$. \end{proposition}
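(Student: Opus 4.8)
The plan is to verify the hypotheses of Lemma \ref{Lem:ssplit-rigid}(1) for the function $f$. Since $q$ is $G$-invariant, $G$ acts on $F(q)$; concretely, for any $\gamma\in G$ one has $\mathrm{Hess}(\gamma^{*}f)=\gamma^{*}(\mathrm{Hess}\,f)=\gamma^{*}q=q$, so $\gamma^{*}f\in F(q)$ as well. By the affine structure of $F$ recalled in Section 2.1, the difference $\gamma^{*}f-f$ then lies in $V=\{v:\mathrm{Hess}\,v=0\}$.

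Next I would bring in the dimension hypothesis. The space $V$ is a vector space containing the constant functions, so $\dim V=1$ forces $V$ to consist of exactly the constants. Hence for every $\gamma\in G$ there is a constant $C_{\gamma}\in\mathbb{R}$ with $\gamma^{*}f=f+C_{\gamma}$. Since $f$ is assumed non-constant, Lemma \ref{Lem:ssplit-rigid}(1) applies and yields that $(M,g)$ is a one-dimensional extension of $H=\{\gamma\in G:\gamma^{*}f=f\}$, together with the normal form $f=ar+b$ in terms of the distance parameter $r$ supplied by Proposition \ref{Def:Semi-Split}. Non-constancy of $f$ gives $a\neq0$; since $r$ is only determined up to an additive constant, we may absorb $b$ and write $f=kr$ with $k=a\neq0$.

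I do not expect any genuine obstacle: the real content has already been isolated in Lemma \ref{Lem:ssplit-rigid} (and Proposition \ref{Def:Semi-Split}), and this proposition is essentially the observation that $\dim V=1$ is exactly what is needed to trigger that lemma. The only small points to watch are the identification $V=\{\text{constants}\}$ from the dimension hypothesis and the harmless renormalization of the distance function that kills the additive constant $b$.
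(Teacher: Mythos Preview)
Your argument is correct and follows exactly the paper's approach: use $G$-invariance of $q$ to get $\gamma^{*}f\in F(q)$, deduce $\gamma^{*}f-f\in V=\{\text{constants}\}$ from $\dim V=1$, and invoke Lemma~\ref{Lem:ssplit-rigid}(1). Your additional remark about absorbing the constant $b$ into the distance parameter $r$ is a harmless normalization that the paper leaves implicit.
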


\begin{proof} Recall that $\gamma^{*}f=f\circ\gamma^{-1}$. Since
$q$ is invariant under $\gamma$ we have $\gamma^{*}f\in F$. Therefore,
$\gamma^{*}f-f\in V$ and this is a real number since $V$ consists
only of constants. This shows that $\gamma^{*}f=f+C_{\gamma}$ for
a constant $C$, so we can apply Lemma \ref{Lem:ssplit-rigid}. \end{proof}

The rigidity statement for complete spaces which have non-constant
functions in $V$ is the following.

\begin{proposition} \label{Prop:Vsplitting}Suppose $(M,g)$ is a complete Riemannian manifold
and suppose that $\mathrm{dim}(V)=k+1$ for some $k\geq1$, then $M$
splits isometrically as $\mathbb{R}^{k}\times N$ for some space $N$
and $\mathrm{Isom}(M)=\mathrm{Isom}(\mathbb{R}^{k})\times\mathrm{Isom}(N)$.
Moreover, $\dim(V(N))=1$ and $V(M)$ consists of the space of affine
functions $\mathbb{R}^{k}\to\mathbb{R}$. \end{proposition}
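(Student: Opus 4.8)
The plan is to reduce everything to the de Rham-type splitting induced by parallel gradient fields. First I would observe that if $v \in V$ is non-constant, then $\nabla v$ is a nontrivial parallel vector field, since $\hess v = 0$ means $\nabla(\nabla v) = 0$. Conversely, a constant vector field arising from $V$ is a gradient. Thus the gradients $\{\nabla v : v \in V\}$ form a vector space of parallel fields; modding out by the constants (which have zero gradient), I get an injection of $V/\mathbb{R}$ into the space of parallel vector fields, so there are at least $k$ linearly independent parallel fields $X_1, \dots, X_k$, with $X_i = \nabla v_i$ for some $v_i \in V$. After applying Gram–Schmidt (using that parallel fields have constant inner products, so the Gram matrix is constant and the process produces again parallel fields), I may assume $X_1, \dots, X_k$ are orthonormal and parallel, with $X_i = \nabla u_i$ still gradients of functions $u_i \in V$ (orthonormalizing is a constant-coefficient linear operation, so it preserves membership in $V$).

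Next I would invoke the standard consequence of completeness: a complete Riemannian manifold carrying $k$ pointwise-linearly-independent parallel vector fields splits isometrically as $\mathbb{R}^k \times N$, with the $\mathbb{R}^k$ factor spanned by the $X_i$ and $N$ the integral manifold of the orthogonal complement distribution (which is parallel, hence integrable and totally geodesic, and its leaves are complete). Under this splitting the functions $u_i$ restrict to (affine) coordinate functions on the $\mathbb{R}^k$ factor. I would then argue that $V(M)$ is \emph{exactly} the affine functions on $\mathbb{R}^k$: the inclusion $\supseteq$ is clear; for $\subseteq$, given any $v \in V(M)$, its gradient $\nabla v$ is parallel, hence (by the de Rham decomposition of the holonomy and the fact that the $N$-factor has no further parallel fields once we have extracted the maximal flat factor — which is where I must be careful to have taken $k$ \emph{maximal}, i.e. $\dim V = k+1$ forces exactly $k$ independent parallel gradients) lies in the span of the $X_i$, so $v$ is affine-linear in the Euclidean coordinates up to a constant. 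This simultaneously gives $\dim(V(N)) = 1$: any $v \in V(N)$ pulls back to an element of $V(M)$ constant in the $\mathbb{R}^k$ directions, hence constant.

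Finally, for the isometry group statement $\mathrm{Isom}(M) = \mathrm{Isom}(\mathbb{R}^k) \times \mathrm{Isom}(N)$, I would use that the splitting $\mathbb{R}^k \times N$ is canonical: the $\mathbb{R}^k$ factor is intrinsically characterized as the Euclidean–de Rham factor (the integral leaf of the span of \emph{all} parallel fields), and $N$ as the complementary factor which, by the maximality argument above, has no nontrivial parallel fields, hence no Euclidean de Rham factor of its own. Any isometry of $M$ permutes the de Rham factors and must preserve this canonical flat/non-flat decomposition, so it splits as a product of an isometry of $\mathbb{R}^k$ and one of $N$; conversely every such product is an isometry. The main obstacle I anticipate is the bookkeeping in the second paragraph: making precise that $\dim V = k+1$ forces the flat de Rham factor to be \emph{exactly} $k$-dimensional (not larger), which amounts to checking that on $N$ there are no nonconstant parallel gradient fields — equivalently that $\dim V(N) = 1$ — and this is really the content of the claim, so the argument is mildly circular-looking and needs to be organized as: extract a \emph{maximal} flat factor $\mathbb{R}^m$, deduce $\dim V(M) = m+1$ from the de Rham structure of the remaining factor, hence $m = k$.
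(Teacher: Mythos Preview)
Your proposal is correct and follows essentially the same route as the paper: parallel gradients of functions in $V$ force a Euclidean splitting, a dimension count identifies $\mathbb{R}^k$ as the full Euclidean de Rham factor (your maximality argument is exactly the paper's ``otherwise $\dim V>k+1$''), and the canonicity of that factor gives the isometry-group splitting. The paper's proof is three sentences long and leaves all of the bookkeeping you spell out implicit, but the ideas coincide.
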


\begin{proof} The metric splitting follows from the fact that all
elements in $V$ have parallel gradient. Moreover, $\bbR^{k}$ must
be the Euclidean de Rham factor as otherwise $\dim V>k+1$. This shows
that the isometry group splits. Finally if $\dim (V\left(N\right))>1$,
then also $\dim V>k+1$. \end{proof}

The previous two propositions show that if $f\in F(q)$ is a non-constant
function and $q$ is invariant under a transitive group of isometries, then
the metric is either a one-dimensional extension or splits as a product.
In the case of a product splitting, we do not assume that the tensor
$q$ necessarily splits, however a further application of Lemma \ref{Lem:ssplit-rigid}
allows us to determine the function $f$ when the metric splits.

\begin{proposition} \label{Prop:fsplit} Let $M=B\times F$ be a
direct product and let $G=G_{1}\times G_{2}$ where $G_{1}$, $G_{2}$
are transitive groups of isometries on $B$ and $F$ respectively.
Suppose that there is a function $f$ on $B\times F$ such that 
\[
(\gamma^{*}f)(x,y)-f(x,y)=\phi_{\gamma}(y)
\]
for all $\gamma\in G$, where $\phi$ is a function of $F$ that depends
on $\gamma$. Either 
\begin{enumerate}
\item $f=\psi(y)$ , or 
\item $B$ is a one-dimensional extension, $g_{B}=dr^{2}+g_{r}$, and $f=ar+\psi(y)$ 
\end{enumerate}
where $\psi$ is a function of $F$ . \end{proposition}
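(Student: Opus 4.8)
The plan is to decouple the $G_1$ and $G_2$ parts of the hypothesis and thereby reduce to a single function on $B$, to which Lemma \ref{Lem:ssplit-rigid} applies directly. Throughout, recall that $\gamma^{*}f=f\circ\gamma^{-1}$, and that for $\gamma=(\gamma_1,\gamma_2)\in G_1\times G_2$ acting on $B\times F$ one has $\gamma^{*}f(x,y)=f(\gamma_1^{-1}x,\gamma_2^{-1}y)$.

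The first and crucial step is to use only the isometries of the form $\gamma=(\id,\gamma_2)$. For these the hypothesis reads $f(x,\gamma_2^{-1}y)-f(x,y)=\phi_{(\id,\gamma_2)}(y)$, a quantity that does not depend on $x$. Fix a basepoint $y_0\in F$. Since $G_2$ acts transitively on $F$, for each $y$ we may pick $\gamma_2$ with $\gamma_2^{-1}y=y_0$, and then $f(x,y_0)-f(x,y)$ is a real number depending only on $y$. Writing $g(x):=f(x,y_0)$, this gives
\[
f(x,y)=g(x)+\psi_0(y)
\]
for a function $\psi_0$ on $F$; in particular there is really only one function on $B$ that needs to be analyzed.

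The second step applies the isometries $\gamma=(\gamma_1,\id)$ to the function $g$. Setting $y=y_0$ in the hypothesis gives $g\circ\gamma_1^{-1}-g=\phi_{(\gamma_1,\id)}(y_0)\in\mathbb{R}$ for every $\gamma_1\in G_1$, i.e. $\gamma_1^{*}g=g+C_{\gamma_1}$. If $g$ is constant, then $f(x,y)=\psi_0(y)+c$ for a constant $c$, so $f$ depends only on $y$ and we are in case (1). If $g$ is non-constant, then, $B$ being $G_1$-homogeneous, part (1) of Lemma \ref{Lem:ssplit-rigid} applies to $g$ on $B$: thus $B$ is a one-dimensional extension, so $g_B=dr^{2}+g_r$ by Proposition \ref{Def:Semi-Split}, and the lemma also yields $g=ar+b$ with $a,b\in\mathbb{R}$ (and $a\neq 0$ since $g$ is non-constant). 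Hence $f(x,y)=ar(x)+(b+\psi_0(y))$, which is case (2) with $\psi:=b+\psi_0$.

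The argument is short, and I do not expect a serious obstacle; the one point to get right is the first reduction. A priori one might worry that the one-dimensional extension structure, and hence the radial coordinate $r$, extracted from the slice $y=y_0$ could depend on the choice of $y_0$ or be incompatible across slices. The observation that removes this concern is precisely the one above: the $(\id,\gamma_2)$-invariance forces $f(\cdot,y)-f(\cdot,y_0)$ to be constant on $B$, so all slices carry the same function on $B$ up to an additive constant, and only $g$ ever has to be understood. The remaining care is purely bookkeeping: one must note that $(\gamma_1,\id)^{*}$ acting on $f(\cdot,y_0)$ coincides with $\gamma_1^{*}$ acting on $g$ as a function on $B$, so that the hypotheses of Lemma \ref{Lem:ssplit-rigid} are met verbatim.
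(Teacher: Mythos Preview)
Your proof is correct and uses the same key ingredient (Lemma \ref{Lem:ssplit-rigid}) as the paper, but you organize the two groups of isometries in the opposite order, and this buys you a cleaner argument. The paper first applies the $(\gamma_1,\id)$ isometries to the slice $y=y_0$, obtains either a constant or a one-dimensional extension there, and then uses the $(\id,\gamma_2)$ isometries to propagate to the other slices; in the non-constant case this produces $f(x,y)=a(y)r+b(y)$, and a further computation with $(\id,\gamma_2)$ is needed to show that $a(y)$ is actually constant. You instead apply the $(\id,\gamma_2)$ isometries first, which immediately forces the additive decomposition $f(x,y)=g(x)+\psi_0(y)$; after that a single application of Lemma \ref{Lem:ssplit-rigid} to the one function $g$ finishes the proof, and the constancy of the $r$-coefficient is automatic. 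Your route is slightly shorter and avoids the slice-by-slice bookkeeping, at no cost.
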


\begin{proof} Fix a point $y_{0}\in F$, and let $f_{0}:B\times\{y_{0}\}\to\mathbb{R}$
be defined as $f_{0}(x)=f(x,y_{0})$. Let $\gamma_{1}\in G_{1}$,
by assumption we have 
\begin{align*}
((\gamma_{1}\times\mathrm{id})^{*}f)(x,y_{0})-f(x,y_{0}) & =\phi_{1}(y_{0}),\\
((\gamma_{1})^{*}f_{0})(x)-f_{0}(x) & =\phi_{1}(y_{0}).
\end{align*}
So, applying Lemma \ref{Lem:ssplit-rigid} we get that either $f_{0}$
is constant in $x$ or $B\times\{y_{0}\}$ is a one-dimensional extension
and $f_{0}=a(0)r+b(0)$.

If $f_{0}(x)=d$ for a constant $d$, then let $\gamma_{2}\in G_{2}$
and consider 
\begin{align*}
((\mathrm{id}\times\gamma_{2})^{*}f)(x,y_{0})-f(x,y_{0}) & =\phi_{2}(y_{0}),\\
f(x,\gamma_{2}(y_{0}))-d & =\phi_{2}(y_{0}).
\end{align*}
Since $G_{2}$ acts transitively, this implies that $f$ is constant
in the $x$ direction everywhere.

On the other hand, if $f_{0}$ is non-constant and $B\times\{y_{0}\}$
is a one-dimensional extension, then $B\times\{y\}$ is a one-dimensional
extension for all $y$ since $M$ is assumed to be a product metric.
Applying Lemma \ref{Lem:ssplit-rigid} to each $f_{y}(x)=f(x,y)$
we obtain that $f(x,y)=a(y)r+b(y)$ where $a,b$ could a priori be
functions of $y$. But then $a$ must be constant as 
\begin{align*}
(\mathrm{id}\times\gamma_{2})^{*}(f)(x,y_{0})-f(x,y_{0}) & =((\gamma_{2}^{*}a)(y_{0})-a(y_{0}))r+(\gamma_{2}^{*}b)(y_{0})-b(y_{0}).
\end{align*}
Since the right hand side is assumed to only be a function of $y$
it follows that $(\gamma_{2}^{*}a)(y_{0})=a(y_{0})$ for all $\gamma_{2}\in G_{2}$
and $a$ is constant.

\end{proof}

This gives us the following theorem.

\begin{theorem} \label{Thm:F} Let $(M,g)$ be a $G$-homogeneous
manifold and let $q$ be a $G$-invariant symmetric two tensor. Suppose that  $f\in F(q)$ is a non-constant function
 then either 
\begin{enumerate}
\item $(M,g)$ is isometric to a product, $N\times\mathbb{R}^{k}$ where
$f$ is constant on $N$, 
\item $(M,g)$ is a one-dimensional extension, $g=dr^{2}+g_{r}$, and $f(x,y)=ar+b$
, or 
\item $(M,g)$ is isometric to a product, $N\times\bbR^{k}$ where $N$
is a one-dimensional extension and $f(x,y)=ar(x)+v(y)$ where $v$
is a function on $\mathbb{R}^{n}$ and $r$ is a distance
function on $N$. 
\end{enumerate}
\end{theorem}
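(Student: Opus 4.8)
The plan is to split the argument according to whether $\dim V = 1$ or $\dim V \geq 2$, mirroring the organization of the preliminary results. Since $q$ is $G$-invariant, $G$ acts on $F(q)$, so $\gamma^{*}f - f \in V$ for every $\gamma \in G$, and $V$ is a $G$-invariant space of functions containing the constants.

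First suppose $\dim V = 1$. Then $V$ consists only of constants, so $\gamma^{*}f = f + C_{\gamma}$ for every $\gamma \in G$. This is exactly the hypothesis of Lemma \ref{Lem:ssplit-rigid}(1) (equivalently Proposition \ref{Prop:C}), and we conclude that $(M,g)$ is a one-dimensional extension with $g = dr^{2} + g_{r}$ and $f = ar + b$, which is conclusion (2).

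Now suppose $\dim V = k+1 \geq 2$. By Proposition \ref{Prop:Vsplitting}, $(M,g)$ splits isometrically as $N \times \mathbb{R}^{k}$ with $\mathrm{Isom}(M) = \mathrm{Isom}(N) \times \mathrm{Isom}(\mathbb{R}^{k})$, with $\dim V(N) = 1$, and with $V(M)$ equal to the space of affine functions on the $\mathbb{R}^{k}$-factor. Hence, writing $\gamma = (\gamma_{N}, \gamma_{E}) \in G$, the function $\gamma^{*}f - f$ is affine in the $\mathbb{R}^{k}$-variable and independent of the $N$-variable, which is the hypothesis of Proposition \ref{Prop:fsplit} with base $N$ and fiber $\mathbb{R}^{k}$ --- the one caveat being that $G$ is only a subgroup of the product $\mathrm{Isom}(N) \times \mathrm{Isom}(\mathbb{R}^{k})$, not necessarily the full product. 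I would deal with this by observing that for a fixed $x_{0} \in \mathbb{R}^{k}$ the subgroup $\{\gamma \in G : \gamma_{E}x_{0} = x_{0}\}$ still projects onto a transitive group of isometries of $N$ (since $G$ is transitive on $M$ it carries $(p_{0},x_{0})$ to $(p,x_{0})$ for every $p \in N$), and symmetrically for a fixed point of $N$; this is all the argument of Proposition \ref{Prop:fsplit} uses. Applying Lemma \ref{Lem:ssplit-rigid}(1) to $f(\cdot,x_{0})$ on $N$ and then transporting around $\mathbb{R}^{k}$ with $G$ yields two alternatives: either $f$ depends only on the $\mathbb{R}^{k}$-variable, which is conclusion (1); or $N$ is a one-dimensional extension with $g_{N} = dr^{2} + g_{r}$ and $f(p,x) = a\,r(p) + v(x)$ for a single constant $a$ and a function $v$ on $\mathbb{R}^{k}$, which is the shape of conclusion (3).

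To complete conclusion (3), I would take the $\mathbb{R}^{k}$--$\mathbb{R}^{k}$ block of the equation $\mathrm{Hess}f = q$: on a product metric this block equals the Euclidean Hessian of $v$, and since $q$ is $G$-invariant and $G$ preserves each factor, $\mathrm{Hess}v$ is invariant under the projection of $G$ to $\mathrm{Isom}(\mathbb{R}^{k})$, which acts transitively on $\mathbb{R}^{k}$; this determines the form of $v$. I expect the main obstacle to be precisely the bookkeeping forced by $G$ being only a subgroup of $\mathrm{Isom}(N) \times \mathrm{Isom}(\mathbb{R}^{k})$ --- keeping track that the relevant stabilizer subgroups remain transitive on each factor so that Lemma \ref{Lem:ssplit-rigid} and Proposition \ref{Prop:fsplit} genuinely apply --- together with the final identification of $v$ from the Euclidean block of $q$.
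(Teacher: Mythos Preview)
Your argument follows the paper's: split on $\dim V$, apply Proposition~\ref{Prop:C} when $\dim V=1$, and combine Proposition~\ref{Prop:Vsplitting} with Proposition~\ref{Prop:fsplit} when $\dim V>1$. Where you worry about $G$ not being a product and work with stabilizer subgroups, the paper simply asserts $G=G_1\times G_2$ on the grounds that unit tangent vectors to the $\mathbb{R}^k$-factor are characterized as gradients of functions in $V$, and then applies Proposition~\ref{Prop:fsplit} directly. The paper does not separately argue that $v$ is affine in case~(3) --- Proposition~\ref{Prop:fsplit} only yields $f=ar+\psi(y)$ with $\psi$ an unspecified function on the fiber --- and your Euclidean-block argument shows only that $\mathrm{Hess}\,v$ is constant, hence $v$ at most quadratic.
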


\begin{proof} We have already seen that the theorem is true when
$\mathrm{dim}(V)=1$. So suppose $\mathrm{dim}(V)>1$ and note that
the metric splits as a direct product, $N\times\mathbb{R}^{k}$. Moreover,
$G=G_{1}\times G_{2}$ because unit tangent vectors
to the $\bbR^{k}$ factor are characterized as gradients to functions
in $V$. We also have that $\gamma^{*}f-f$ is a function of the $\mathbb{R}^{k}$
factor for any $\gamma$. So we may apply Proposition \ref{Prop:fsplit}
to obtain the result. \end{proof}

%


The natural question coming from Theorem \ref{Thm:F} is what conditions imply that a one-dimensional extension is a product, the next proposition gives two such conditions. 

\begin{proposition} \label{Prop:ssplit-properties} Let $(M,g)$
be a one-dimensional extension. The following properties hold: 
\begin{enumerate}
\item $\Delta r$ is constant, 
\item $\mathrm{Ric}(\nabla r,\nabla r)\leq0$, 
\item If $\mathrm{Ric}(\nabla r,\nabla r)=0$, then $g=g_{0}+dr^{2}$ is
a product, 
\item If $\mathrm{div}(\nabla\nabla r)=0$, then $g=g_{0}+dr^{2}$ is a
product. 
\end{enumerate}
\end{proposition}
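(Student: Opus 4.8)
The plan is to run everything off the geometric picture supplied by Proposition~\ref{Def:Semi-Split}: since $M$ is a one-dimensional extension of $H$, the function $r$ is a globally defined distance function ($|\nabla r|\equiv 1$, so its integral curves are geodesics and $\mathrm{Hess}\,r(\nabla r,\cdot)=0$), the metric is $g=g_r+dr^2$, the group $H$ acts transitively on the level sets of $r$, and $H$ is normal in $G$ since it is the kernel of the homomorphism $\phi:G\to\mathbb R$. The first and main step is (1). Because $H$ is normal, every $\gamma\in G$ sends $H$-orbits to $H$-orbits, hence permutes the level sets of $r$, so $\gamma^*r$ is a function of $r$ alone; writing $\gamma^*r = h_\gamma(r)$ and comparing gradients ($\gamma$ an isometry with $|\nabla r|\equiv 1$ forces $|h_\gamma'|\equiv 1$) gives $\gamma^*r=\pm r+C_\gamma$. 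Therefore $\gamma^*(\Delta r)=\Delta(\gamma^*r)=\pm\Delta r$ for every $\gamma\in G$, and since $G$ acts transitively $|\Delta r|$ is constant on $M$; as $\Delta r$ is continuous and $M$ connected, $\Delta r$ itself is constant. (Alternatively, $H$-invariance gives $\Delta r=\psi(r)$, and the $\mathbb R$-factor acting by $r\mapsto r+t$ forces $\psi(r+t)=\psi(r)$ for all $t$, hence $\psi$ constant.)

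For (2) and (3) I would apply the Bochner formula to $r$: since $|\nabla r|^2\equiv 1$,
\[
0=\tfrac12\Delta|\nabla r|^2=|\mathrm{Hess}\,r|^2+\langle\nabla r,\nabla(\Delta r)\rangle+\mathrm{Ric}(\nabla r,\nabla r).
\]
By (1) the middle term vanishes, so $\mathrm{Ric}(\nabla r,\nabla r)=-|\mathrm{Hess}\,r|^2\le 0$, which is (2). If moreover $\mathrm{Ric}(\nabla r,\nabla r)=0$, then $|\mathrm{Hess}\,r|^2=0$, so $r$ is a non-constant function with $\mathrm{Hess}\,r\equiv 0$; since homogeneous manifolds are complete, $\dim V\ge 2$ and Proposition~\ref{Prop:Vsplitting} shows $M$ splits isometrically with $r$ a Euclidean coordinate, i.e. $g=dr^2+g_0$. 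This proves (3).

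Finally, (4) should reduce to (3) through the commutation identity $\mathrm{div}(\mathrm{Hess}\,r)=d(\Delta r)+\mathrm{Ric}(\nabla r,\cdot)$: if $\mathrm{div}(\nabla\nabla r)=0$, then using $d(\Delta r)=0$ from (1) we get $\mathrm{Ric}(\nabla r,\cdot)=0$, in particular $\mathrm{Ric}(\nabla r,\nabla r)=0$, and (3) yields the product splitting. The only genuinely delicate point is step (1): one must extract the invariance of $r$ up to sign and an additive constant from the \emph{algebraic} hypothesis (normality of $H$, transitivity of $G$) rather than from any a priori regularity of the family $g_r$; once that constancy of $\Delta r$ is secured, (2)--(4) follow immediately from the Bochner formula, the commutation identity, and the already-established splitting result.
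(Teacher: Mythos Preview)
Your proof is correct and follows essentially the same path as the paper: Bochner's formula for (2)--(3) and the commutation identity $\mathrm{div}(\mathrm{Hess}\,r)=d(\Delta r)+\mathrm{Ric}(\nabla r,\cdot)$ for (4). For (1) the paper is slightly more direct: from the semi-direct product structure $G=H\rtimes\mathbb R$ already established in Proposition~\ref{Def:Semi-Split} one has $\gamma^*r=r+C_\gamma$ (no sign ambiguity), so $\nabla r$ and hence $\mathrm{Hess}\,r$ are $G$-invariant and $\Delta r=\mathrm{tr}(\mathrm{Hess}\,r)$ is constant---your ``alternative'' argument is exactly this, and the $\pm$ in your first argument is an unnecessary precaution.
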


\begin{proof} The transitive group $G$ preserves $\nabla r$ and
$\nabla\nabla r$ is invariant by $G$ so $\Delta r=\mathrm{tr}(\nabla\nabla r)$
is constant.


To see (2) and (3) consider the Bochner formula applied to $r$: 
\[
\frac{1}{2}\Delta|\nabla r|^{2}=\mathrm{Ric}(\nabla r,\nabla r)+|\mathrm{Hess}r|^{2}+g(\nabla\Delta r,\nabla r).
\]
Since $|\nabla r|$ and $\Delta r$ are constant, we obtain 
\[
\mathrm{Ric}(\nabla r,\nabla r)=-|\mathrm{Hess}r|^{2}.
\]
So if $\mathrm{Ric}(\nabla r,\nabla r)=0$ then $|\mathrm{Hess}r|^{2}=0$,
which implies that $M$ splits isometrically as $N\times\mathbb{R}$.

Finally, for (4) note that 
\[
\mathrm{div}(\nabla\nabla r)=\nabla\Delta r+\mathrm{Ric}(\nabla r).
\]
So, as $\Delta r$ is constant, the condition $\mathrm{div}(\nabla\nabla r)=0$
implies that $\mathrm{Ric}(\nabla r)=0$ and we have a product splitting.
\end{proof}



This allows us to prove Theorem \ref{Thm:Soliton}

\begin{proof}[Proof of Theorem \ref{Thm:Soliton}] We have that
$\mathrm{Hess}f=q$ for a tensor $q$ that is invariant under a transitive
group of isometries.  Assume that $f$ is
non-constant, then since $q$ is invariant under isometries Theorem
\ref{Thm:F} implies that either $M$ is a one-dimensional extension
or $M$ splits as a product metric $M=N\times\mathbb{R}^{k}$, $g=g_{1}+g_{2}$.
Assume also that this splitting is maximal in the sense that $M$
does not split off more than $k$ Euclidean factors.

If $\mathrm{div}(q)=0$ then  $\mathrm{div}(\nabla\nabla r)=0$, so by Proposition
\ref{Prop:ssplit-properties} the one-dimensional extension in the splitting is itself a product
$\mathbb{R}\times N$, where $r$ is the coordinate in the $\mathbb{R}$
direction. But, this contradicts the maximality of the splitting. 

Therefore, we have $M=N\times\mathbb{R}^{k}$, by Theorem \ref{Thm:F}
we also have a splitting of the function $f$ of the form $f=f_{1}\times f_{2}$
where $f_{1}$ is a function on $N$ and $f_{2}$ is a function on
$\mathbb{R}^{k}$. In particular, $q=\mathrm{Hess}f=\mathrm{Hess}(f_{1})+\mathrm{Hess}(f_{2})$
so $q$ splits as $q_{1}+q_{2}$ where $q_{1}$ is a tensor on $N$
and $q_{2}$ is a tensor on $\mathbb{R}^{k}$. In particular, $\mathrm{div}q=\mathrm{div}(q_{1})+\mathrm{div}(q_{2})$,
so $\mathrm{div}(q_{1})=0$. If $f_{1}$ is non-constant then, by
Theorem \ref{Thm:F}, $(N,g_{1})$ is a one-dimensional extension
with $\mathrm{Hess}f_{1}=q_{1}$ and $\mathrm{div}q_{1}=0$. So we
also obtain that the one-dimensional extension in this case is a product,
again contradicting the maximality of the splitting. Therefore, for
the maximal splitting, we must have that $f_{1}$ is constant on the
$N$ factor. \end{proof} 

\section{Traceless $F$}

Now we consider spaces of functions $\mathring{F}(M,g,q)$ of solutions to (\ref{eqn 1.1a}).
Given our established results about the corresponding space $F(M,g,q)$,
we consider the question of when $\mathring{F}(M,g,q)\neq F(M,g,q)$.
There is a trivial way to produce such examples by adding a factor
of $g$ to $q$. Namely, if $f\in F(M,g,q-\phi g)$ for $\phi\in C^{\infty}(M)$, $\phi \neq 0$, 
then $f\notin F(M,g,q)$, but $f\in\mathring{F}(M,g,q)$. This motivates
the following definition.

\begin{definition} Let $(M,g)$ be a Riemannian manifold and $q$
a symmetric two-tensor, then $\mathring{F}(M,g,q)$ is inessential if $\mathring{F}(M,g,q)=F(M,g,q')$
for some quadratic form $q'$. $\mathring{F}(M,g,q)$ is essential
if it is not inessential. \end{definition}

The next proposition shows that essential spaces are easily characterized
in terms of the spaces $\mathring{V}$ and $V$. It also shows that
the property of $\mathring{F}$ being essential is a property of the
space $(M,g)$ but not the choice of $q$.


\begin{proposition} \label{Prop:TF-simple} Let $(M,g)$ be a Riemannian manifold and $q$
a symmetric two-tensor, then the following are equivalent: 
\begin{enumerate}
\item $\mathring{F}(M,g,q)$ is essential, 
\item $\mathring{F}(M,g,q)\neq F(M,g,q-\phi g)$ for all $\phi\in C^{\infty}(M)$, 
\item The map $\Delta:\mathring{F}(M,g,q)\to C^{\infty}(M)$ is non-constant,
and 
\item $\mathring{V}\neq V$. 
\end{enumerate}
Moreover, if $\mathring{F}(M,g,q)=F(M,g,q')$ is inessential and $q$
is invariant under $G\subset\mathrm{Isom}(M,g)$ then $q'$ is also
invariant under $G$. \end{proposition}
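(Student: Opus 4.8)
The plan is to reduce the whole statement to a dictionary between $\mathring{F}(M,g,q)$ and the ordinary solution spaces $F(M,g,q-\phi g)$. The first step I would carry out is the elementary pointwise fact that, writing $n=\dim M$, a smooth $f$ satisfies $\mathring{\Hess f}=\mathring q$ if and only if $\Hess f=q-\phi g$ with $\phi=\tfrac1n(\tr q-\Delta f)$; this is just equating trace-free parts and solving for the pure-trace discrepancy. This gives $\mathring F(M,g,q)=\bigcup_{\phi\in C^\infty(M)}F(M,g,q-\phi g)$, and --- the one point that is not pure bookkeeping --- that whenever $\mathring F(M,g,q)$ is non-empty and equals $F(M,g,q')$ for some symmetric two-tensor $q'$, the tensor $q'$ itself has the form $q-\phi g$: any $f_0$ in the common space satisfies $\Hess f_0=q'$ and $\mathring{\Hess f_0}=\mathring q$, so $\mathring{q'}=\mathring q$ and hence $q'=q-\tfrac1n(\tr q-\tr q')\,g$. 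Throughout I would assume $\mathring F(M,g,q)\neq\varnothing$, which is the case of interest; and I would record the trivial inclusion $V\subseteq\mathring V$ together with the affine structure of $F$ over $V$ and of $\mathring F$ over $\mathring V$ from Section 2.

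With this in hand the equivalences fall out one at a time. The equivalence (1)$\Leftrightarrow$(2) is immediate, since ``$\mathring F=F(q')$ for some $q'$'' is, by the structural point above, the same as ``$\mathring F=F(q-\phi g)$ for some $\phi\in C^\infty(M)$''. For (1)$\Leftrightarrow$(3): if $\mathring F=F(q')$ then every $f$ in it has $\Hess f=q'$, hence $\Delta f=\tr q'$, one and the same function for all such $f$, so the map $\Delta$ is constant on $\mathring F$; conversely, if $\Delta\equiv h$ on $\mathring F$ then every $f\in\mathring F$ satisfies $\Hess f=\mathring q+\tfrac hn g=:q'$, so $\mathring F\subseteq F(q')$, and since this $q'$ is of the form $q-\phi g$ the dictionary gives the reverse inclusion; hence $\mathring F=F(q')$ is inessential. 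So being essential is equivalent to $\Delta$ being non-constant on $\mathring F$.

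For (1)$\Leftrightarrow$(4), because $V\subseteq\mathring V$ always, the claim is that $\mathring F$ is inessential if and only if $\mathring V=V$. If $\mathring F=F(q')$, fix $f_0\in\mathring F$: for $v\in\mathring V$ we have $\mathring{\Hess}(f_0+v)=\mathring q$, so $f_0+v\in\mathring F=F(q')$, whence $\Hess(f_0+v)=q'=\Hess f_0$ and $\Hess v=0$, i.e.\ $v\in V$. Conversely, if $\mathring V=V$, fix $f_0\in\mathring F$: every $f\in\mathring F$ has $f-f_0\in\mathring V=V$, so $\Hess f=\Hess f_0=:q'$, giving $\mathring F\subseteq F(q')$, and the dictionary again supplies the reverse inclusion, so $\mathring F=F(q')$ is inessential.

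For the final assertion, suppose $\mathring F(M,g,q)=F(M,g,q')$ is inessential (hence non-empty) and $q$ is $G$-invariant. Since $g$ and $\tr q$ are $G$-invariant, so is $\mathring q$, and therefore $\mathring F(M,g,q)$ is a $G$-invariant set, because $\mathring{\Hess(\gamma^*f)}=\gamma^*(\mathring{\Hess f})=\gamma^*\mathring q=\mathring q$ for $\gamma\in G$. Choosing $f_0\in\mathring F=F(q')$ we have $q'=\Hess f_0$, and then $\gamma^*q'=\gamma^*\Hess f_0=\Hess(\gamma^*f_0)=q'$ for every $\gamma\in G$, because $\gamma^*f_0$ again lies in $\mathring F=F(q')$; so $q'$ is $G$-invariant. (Equivalently, one may apply Proposition \ref{Prop:q-vs-W-inv} to the nontrivial $G$-invariant space $F(M,g,q')$.) The only step where I expect even mild friction is the structural observation in the first paragraph that an inessential $\mathring F$ must be realized by a $q'$ differing from $q$ by a multiple of $g$; once that is in place the remaining work is trace bookkeeping plus the affine structure of the solution spaces, together with a little care about the degenerate case $\mathring F=\varnothing$ --- which is inessential and makes (1)--(3) fail, and is the reason one restricts to non-empty $\mathring F$ so that (4) carries the same content as the other three conditions.
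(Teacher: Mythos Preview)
Your proof is correct and follows essentially the same approach as the paper: the ``dictionary'' you set up in the first paragraph (that any $q'$ realizing $\mathring F(q)=F(q')$ must have the form $q-\phi g$) is exactly the paper's argument for (2)$\Rightarrow$(1), and the remaining equivalences are handled by the same trace and affine-structure bookkeeping, just organized as (1)$\Leftrightarrow$(3) and (1)$\Leftrightarrow$(4) directly rather than chained through (3)$\Leftrightarrow$(4). Your explicit flagging of the degenerate case $\mathring F=\varnothing$ (where (4) can hold while (1)--(3) fail) is a point the paper leaves implicit.
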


\begin{proof}

(1) $\Rightarrow$ (2) is obvious. To see (2) $\Rightarrow$ (1) consider that if (1) is not true
then $\mathring{F}(M,g,q)=F(M,g,q')$.  So $\Hess f=q'$ and 
\[
\mathring{q}=\mathring{\Hess f}=q'-\frac{\tr(q')}{n}g.
\]
So $q'=q+\frac{\tr(q')-tr(q)}{n}g$ which would contradict (2). 

(1) and (3) are equivalent because if two quadratic forms have the
same trace free part, then they are the same if and only if they have
the same trace.

To see that (3) and (4) are equivalent note that $w\in\mathring{V}$
is an element of $V$ if and only if $\Delta w=0$. If $f,f'\in\mathring{F}(M,g,q)$
then $f-f'\in\mathring{V}$, so $\Delta$ being non-constant on $\mathring{F}(M,g,q)$
is equivalent to there being a function in $\mathring{V}$ with non-zero
Laplacian.

The final statement follows from Proposition \ref{Prop:q-vs-W-inv}.
\end{proof}

The next example shows that for simply connected spaces of constant
curvature, $\mathring{F}$ is essential.

\begin{example} Let $(M^{n},g)$ be a simply connected space of constant
curvature. Then $\mathrm{dim}(\mathring{V})=n+2$ and $\mathring{F}(M,g,q)$ is essential. If $(M^{n},g)$ is Euclidean
space then $V$ is the $n+1$ dimensional space of affine functions and  $\mathring{V}$ is spanned by $V$ along with the function $|x|^{2}$. If $M^{n}$ is a sphere or hyperbolic space then $V$ just contains constant functions.  For the sphere $\mathring{V}$
also contains the restriction the coordinate functions in $\mathbb{R}^{n+1}$
while  for  hyperbolic
space $\mathring{V}$ contains the restriction of the coordinate
functions in $\mathbb{R}^{1,n}$.  See \cite{He-Petersen-Wylie} for more details.  \end{example}

On the other hand $\mathring{F}$ is inessential for product spaces.

\begin{proposition} \label{Prop:Products} If $(M,g)=(M_{1}^{n_{1}}\times M_{2}^{n_{2}},g_{1}+g_{2})$,
then $\mathring{V}=V$, so $\mathring{F}$ is inessential. \end{proposition}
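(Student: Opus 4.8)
The plan is to show directly that on a product $(M,g) = (M_1^{n_1} \times M_2^{n_2}, g_1 + g_2)$ any function $v$ with $\mathring{\hess} v = 0$ already satisfies $\hess v = 0$, i.e. $\mathring V \subseteq V$; the reverse inclusion is automatic, and then inessentiality of $\mathring F$ follows from Proposition \ref{Prop:TF-simple}, part (4). So suppose $\hess v = \frac{\Delta v}{n} g$ with $n = n_1 + n_2$. First I would recall the block structure of the Hessian on a Riemannian product: for the coordinate vector fields $X_1, Y_1$ tangent to $M_1$ and $X_2, Y_2$ tangent to $M_2$, the Levi-Civita connection splits, so $\hess v(X_1, Y_2) = X_1 Y_2 v$ has no Christoffel correction (the connection between the two factors vanishes), while $\hess v(X_i, Y_i)$ restricted to a fixed leaf $M_i \times \{p\}$ or $\{p\} \times M_i$ agrees with the intrinsic Hessian of the restricted function plus the appropriate normal-derivative terms — which also vanish since the leaves are totally geodesic.

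The key computation is on the mixed block. From $\mathring{\hess} v = 0$ we get $\hess v(X_1, Y_2) = \frac{\Delta v}{n} g(X_1, Y_2) = 0$ for all $X_1$ tangent to $M_1$ and $Y_2$ tangent to $M_2$, since $g(X_1, Y_2) = 0$. Hence $X_1 Y_2 v = 0$ for all such fields, which forces $v(x_1, x_2) = v_1(x_1) + v_2(x_2)$ for functions $v_i$ on $M_i$ (locally, and then globally on the product). Now restrict the equation $\hess v = \frac{\Delta v}{n} g$ to a leaf $M_1 \times \{x_2\}$: because this leaf is totally geodesic, the intrinsic Hessian of $v_1$ equals $\hess v$ restricted to tangent directions, so $\hess^{g_1} v_1 = \frac{\Delta v}{n} g_1$. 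Taking the intrinsic trace gives $\Delta_{g_1} v_1 = \frac{n_1}{n} \Delta v$. Symmetrically $\Delta_{g_2} v_2 = \frac{n_2}{n} \Delta v$ on each $M_2$-leaf. But $\Delta v = \Delta_{g_1} v_1 + \Delta_{g_2} v_2$, and since $v_1$ depends only on $x_1$ and $v_2$ only on $x_2$, the function $\Delta_{g_1} v_1$ depends only on $x_1$ while $\frac{n_1}{n}\Delta v$ must then also depend only on $x_1$; similarly $\Delta_{g_2} v_2$ depends only on $x_2$. Writing $\Delta_{g_1} v_1 = \frac{n_1}{n}(\Delta_{g_1}v_1 + \Delta_{g_2} v_2)$ shows $\Delta_{g_1} v_1$ and $\Delta_{g_2} v_2$ are each constant, hence $\Delta v$ is constant.

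It remains to upgrade "$\Delta v$ constant" to "$\Delta v = 0$." Here I would use that $v_1$ on $M_1$ satisfies $\hess^{g_1} v_1 = c_1 g_1$ with $c_1 = \frac{n_1}{n}\Delta v$ constant, so $v_1 \in V(M_1, g_1, c_1 g_1)$ in the sense that its full (not just trace-free) Hessian is a constant multiple of the metric; taking one more covariant derivative and using that $\nabla(c_1 g_1) = 0$ shows $\hess^{g_1} v_1$ is parallel, and then the Bochner-type identity (as in the proof of Proposition \ref{Prop:ssplit-properties}) applied to $v_1$ on a leaf gives control — but more simply, the mixed partials already gave $v = v_1 + v_2$, so $\hess v = \hess^{g_1} v_1 + \hess^{g_2} v_2 = c_1 g_1 + c_2 g_2$ with $c_1 = \frac{n_1}{n}(c_1 n_1 + c_2 n_2)/n_1 \cdot$... — let me instead just observe $c_1 + c_2$... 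The cleanest route: from $\Delta_{g_1} v_1 = n_1 c_1$ and the above, $n_1 c_1 = \frac{n_1}{n}(n_1 c_1 + n_2 c_2)$, i.e. $n c_1 = n_1 c_1 + n_2 c_2$, giving $n_2 c_1 = n_2 c_2$, so $c_1 = c_2 =: c$ and $\hess v = c g$, i.e. $v \in F(M, g, cg)$. This does not immediately give $c = 0$ unless $M$ is, say, compact or complete with appropriate curvature; however, what we actually need for the proposition is only $\mathring V = V$, and $\hess v = cg$ with the same $c$ in both factors means $v \in V$ precisely when $c = 0$. So the real content — and the step I expect to be the main obstacle — is ruling out $c \neq 0$: I would argue that if $c \neq 0$ then $v_1/c$ has Hessian equal to $g_1$, forcing (by Proposition \ref{Prop:Vsplitting} applied after noting such a function makes $M_1$ split off a line with a specific non-flat behavior, or by a direct Bochner argument showing $\Ric(\nabla v_1, \nabla v_1) = -(n_1-1)c^2 + \ldots$) a contradiction with homogeneity-free reasoning is \emph{not} available here since no homogeneity is assumed in this proposition. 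Re-examining, the proposition as stated has no homogeneity hypothesis, so the argument must be purely local and the conclusion $\mathring V = V$ must hold even allowing $c \neq 0$ — meaning I must have mis-stepped, and in fact the correct resolution is that $\hess v = cg$ on a product \emph{does} force $c = 0$ locally: differentiating, $0 = \nabla_X \hess v(Y,Z) - \nabla_Y \hess v(X,Z) = -c\,g$-terms vanish, but $\riem$ acting gives $\hess v(\cdot,\cdot)$ related to $\ric$; on a product with $n_1, n_2 \geq 1$ one of the factors... Actually the clean fact is: if $\hess v = cg$ globally on a product then restricting to a leaf and using that the leaf's second fundamental form is zero shows the leaf function has $\hess = c g_i$, and iterating the product structure, $v = v_1 + v_2$ with each $\hess^{g_i} v_i = c g_i$; now the commutation $\nabla^2_{X_1, X_2} v - \nabla^2_{X_2,X_1} v$ involves no curvature (mixed directions, flat connection between factors) and is automatically zero, giving no obstruction — so I concede the genuinely delicate point is whether $c$ can be nonzero, and I would handle it by invoking that the statement only claims $\mathring V = V$ \emph{as a consequence used later for homogeneous $(M,g)$}, where Proposition \ref{Prop:Vsplitting} then pins down $c = 0$; thus in the homogeneous setting of all applications, $v \in V$. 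I will present the argument establishing $v = v_1 + v_2$ and $\hess v = cg$, then cite Proposition \ref{Prop:Vsplitting} (in the homogeneous case) or a direct completeness argument to conclude $c = 0$ and hence $\mathring V = V$.
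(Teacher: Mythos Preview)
Your computation is essentially identical to the paper's: the vanishing of the mixed Hessian block gives the additive splitting $v=v_1+v_2$, restriction to each factor yields $\mathring{\hess_{g_i}}v_i=0$, and comparing traces produces the chain $\frac{\Delta_{g_1}v_1+\Delta_{g_2}v_2}{n}=\frac{\Delta_{g_1}v_1}{n_1}=\frac{\Delta_{g_2}v_2}{n_2}$, hence $\hess v=cg$ for a single constant $c$. At that point the paper simply asserts ``which shows that $\Delta_{g_1}f_1=0$ and $\Delta_{g_2}f_2=0$'' and stops. You are right to balk: that implication does not follow from the displayed equalities alone, and the proposition as stated is in fact false. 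The paper's own preceding example supplies the counterexample: on $\mathbb{R}^n=\mathbb{R}^{n_1}\times\mathbb{R}^{n_2}$ the function $|x|^2$ lies in $\mathring{V}\setminus V$.

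Your instinct that an extra hypothesis must enter is correct, though the specific route you suggest via Proposition~\ref{Prop:Vsplitting} does not do the job (that result concerns $V$, not solutions of $\hess v=cg$, and homogeneity alone does not exclude the Euclidean case). The clean fix is this: $\hess_{g_i}v_i=cg_i$ with $c\neq 0$ makes $\nabla v_i$ a non-Killing homothetic gradient field on $M_i$, and a complete Riemannian manifold admitting such a field is Euclidean (Tashiro \cite{Tashiro}). Thus if at least one factor is complete and non-Euclidean, $c=0$ follows and the proof concludes. This extra hypothesis holds in the only application (Theorem~\ref{Thm:EssentialF}): the product cases in Takagi's list all have a non-Euclidean factor, and if the universal cover is itself Euclidean it already has constant curvature, which is the desired conclusion anyway. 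So your argument is correct up to the point you flagged, and the flag is warranted; supply the homothety step and you are done.
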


\begin{proof}

Consider $f(x_{1},x_{2})\in\mathring{V}$. Then $\mathrm{Hess} f (X,U) = 0$ for $X \in TM_1$ and $U \in TM_2$ so by \cite[Lemma 2.1]{Petersen-Wylie-Symm}
 $f(x_{1},x_{2})=f_{1}(x_{1})+f_{2}(x_{2})$. Thus 
\[
\hess_{g}(f)=\hess_{g_{1}}f_{1}+\hess_{g_{2}}f_{2}=\frac{\Delta_{g_{1}}f_{1}+\Delta_{g_{2}}f_{2}}{n}g.
\]
If we restrict this equation to $M_{1}$ and $M_{2}$ this tells us
that $\mathring{\hess_{g_{1}}f_{1}}=0$ and $\mathring{\hess_{g_{2}}f_{2}}=0$.
Thus 
\[
\frac{\Delta_{g_{1}}f_{1}+\Delta_{g_{2}}f_{2}}{n}=\frac{\Delta_{g_{1}}f_{1}}{n_{1}}=\frac{\Delta_{g_{2}}f_{2}}{n_{2}},
\]
which shows that $\Delta_{g_{1}}f_{1}=0$ and $\Delta_{g_{2}}f_{2}=0$.
Consequently, $\mathring{V}=V$.

\end{proof}

This gives us the following characterization of essential $\mathring{F}$
in the homogeneous case.

\begin{theorem} \label{Thm:EssentialF} Suppose that $(M,g)$ is
a homogeneous Riemannian manifold. If $\mathring{F}(M,g,q)$ is essential,
then $(M,g)$ is a space of constant curvature. \end{theorem}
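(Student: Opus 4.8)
The plan is to combine the structural dichotomy for homogeneous spaces carrying a non-constant function in $V$ or $\mathring V$ with Proposition \ref{Prop:TF-simple} and Proposition \ref{Prop:Products}. By Proposition \ref{Prop:TF-simple}, $\mathring F(M,g,q)$ being essential is equivalent to $\mathring V \neq V$, in particular there is a non-constant function $v \in \mathring V$ with $\Delta v \neq 0$. By the Brinkmann--Tashiro theorem (cited in the Basic Structure subsection), the existence of a non-constant $v \in \mathring V$ forces $(M,g)$ to split as a warped product $g = dr^2 + \rho(r)^2 g_N$ over a one-dimensional base, with $v = v(r)$. The first step is therefore to record this warped product splitting and to note that, since $\rho$ is a geometric invariant of the metric, the group $G$ acting transitively by isometries must preserve the function $r$ up to an additive constant: each $\gamma \in G$ sends the $r$-coordinate to $\pm r + c_\gamma$, and orientation-reversal can be ruled out (or reduced) by the transitivity, so effectively $\gamma^* r = r + c_\gamma$.

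Next I would use this ``almost invariance'' of $r$ together with Lemma \ref{Lem:ssplit-rigid}, or rather its underlying idea: if the image of $\gamma \mapsto c_\gamma$ is nontrivial then $(M,g)$ is a one-dimensional extension, and if it is trivial then $r$ itself is $G$-invariant, hence constant, contradicting that $r$ is a coordinate. So $(M,g)$ is a one-dimensional extension with $g = dr^2 + g_r$, and comparing with the warped-product form forces $g_r = \rho(r)^2 g_N$ for a fixed homogeneous metric $g_N$ on $N = H/G_x$. Now I would invoke the rigidity of the warped product structure: the $G$-invariance of the full curvature data, combined with the warping, should pin down $\rho$ and $g_N$. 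Concretely, in a warped product $dr^2 + \rho^2 g_N$ the radial sectional curvature is $-\rho''/\rho$ and the mixed/tangential curvatures involve $\rho'$, $\rho''$, and the curvature of $g_N$; homogeneity forces $|\riem|^2$ (and indeed each invariant) to be constant along $r$, which severely constrains $\rho$. The cleanest route is probably to combine this with Proposition \ref{Prop:Products}: if the one-dimensional extension were ever a metric product we would be back to the inessential case, so we may assume $\rho$ is non-constant, and then constancy of the curvature invariants of a warped product with non-constant warping along the $r$-direction forces $\rho'' = \kappa \rho$ and $g_N$ of constant curvature, i.e. $(M,g)$ is a space of constant curvature.

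The main obstacle I anticipate is the last step: showing that a \emph{homogeneous} warped product $dr^2 + \rho(r)^2 g_N$ with non-constant $\rho$ must actually be a space of constant curvature, rather than merely an Einstein or curvature-homogeneous warped product. The point to exploit is that it is not just curvature-homogeneous but genuinely $G$-homogeneous with $G$ respecting the $r$-foliation (from the one-dimensional extension structure), so one has the full isometry flow $r \mapsto r + c$ available; applying this flow to the warped-product curvature formulas gives functional equations in $\rho$ (e.g. $-\rho''/\rho = \mathrm{const}$, and the tangential curvature $(k_N - (\rho')^2)/\rho^2 = \mathrm{const}$ where $k_N$ is the sectional curvature of $g_N$) whose simultaneous solution forces constant curvature and $g_N$ of the matching constant curvature. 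I would organize this as: (i) $-\rho''/\rho = \kappa$ constant, so $\rho$ is an exponential, cosh, or sinh type function; (ii) feeding this into the tangential equation forces $g_N$ to have constant curvature and fixes the constant; (iii) concluding that all sectional curvatures of $(M,g)$ equal $-\kappa$ (or $0$), hence $(M,g)$ has constant curvature. Alternatively, and perhaps more efficiently, one can cite that $\mathring V \ni v$ non-constant with $\Delta v \neq 0$ on a homogeneous space already appears in \cite{He-Petersen-Wylie}; but the self-contained argument via the one-dimensional extension plus warped-product curvature constancy is the approach I would present.
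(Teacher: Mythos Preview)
Your approach diverges substantially from the paper's and contains a genuine gap.

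The paper's proof is short and does not use the Brinkmann--Tashiro warped product splitting at all. From $\mathring V \neq V$ it takes $w \in \mathring V \setminus V$ and observes that $\nabla w$ is a conformal field (since $L_{\nabla w} g = 2\Hess w = \frac{2\Delta w}{n} g$) which is \emph{not} Killing (since $\Delta w \not\equiv 0$). Proposition \ref{Prop:CField} then forces $(M,g)$ to be locally conformally flat. Takagi's classification (Theorem \ref{thm:Takagi}) says the universal cover is either a space form or a product of space forms; Proposition \ref{Prop:Products} rules out the product case because products are inessential. That is the whole argument.

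Your route has a real problem at the step ``since $\rho$ is a geometric invariant of the metric, $G$ must preserve $r$ up to an additive constant.'' This is exactly what fails in the target case. On $\mathbb{R}^n$, $S^n$, or $H^n$ there are many distinct warped product splittings $dr^2 + \rho(r)^2 g_N$ (one for each non-constant $v \in \mathring V$, and $\dim \mathring V = n+2$), and the full isometry group does \emph{not} preserve the $r$-foliation coming from any particular one: translations in $\mathbb{R}^n$ do not send the radial coordinate $|x|$ to $|x| + c$. So the inference ``$G$ preserves the warped product foliation, hence $(M,g)$ is a one-dimensional extension with $g_r = \rho(r)^2 g_N$'' is circular: the premise holds only when the warped product structure is rigid, which is precisely the non-constant-curvature situation you are trying to exclude. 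Your acknowledged ``main obstacle'' at the end is a symptom of the same issue; the functional equations for $\rho$ you propose only become available once you know $G$ flows along $\partial_r$, and you have not established that.

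The idea you are missing is to trade the warped product structure for the conformal field $\nabla v$ and invoke Proposition \ref{Prop:CField}, which packages the needed curvature rigidity (via the Weyl/Cotton tensor argument) in one step and feeds directly into Takagi's list.
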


\begin{proof}

Suppose that $\mathring{F}$ is essential. Let $w\in\mathring{V}\neq V$,
then $\nabla w$ is a conformal field which is not Killing. By Proposition
\ref{Prop:CField}, $(M,g)$ is locally conformally flat. By Takagi,
the universal cover of $M$ is either a space of constant curvature
or a product of spaces of constant curvature. Note that if $\pi:\widetilde{M}\to M$
is the universal cover of $M$, $w\in\mathring{V}(M)$ implies $(w\circ\pi)\in\mathring{V}(\widetilde{M})$
and $v\in{V}(M)$ implies $(v\circ\pi)\in V(\widetilde{M})$. Therefore,
if $\mathring{V}(M)\neq V(M)$ then $\mathring{V}(\widetilde{M})\neq V(\widetilde{M})$,
so $M$ essential implies that $\widetilde{M}$ is. Then by Proposition
\ref{Prop:Products}, the universal cover does not split as a product
and so must be a space of constant curvature.

\end{proof}

\begin{theorem}\label{Thm:TF} Let $(M,g)$ be a $G$-homogeneous
Riemannian manifold  and  $q$ be a $G$-invariant symmetric two-tensor. If $f\in\mathring{F}$ is a non-constant
function then
either 
\begin{enumerate}
\item $(M,g)$ is a space of constant curvature, 
\item $(M,g)$ is isometric to a product, $N\times\mathbb{R}^{k}$ where
$f$ is constant on $N$, 
\item $(M,g)$ is a one-dimensional extension, $g=dr^{2}+g_{r}$, and $f(x,y)=ar+b$
, or 
\item $(M,g)$ is isometric to a product, $N\times\bbR^{k}$ where $N$
is a one-dimensional extension and $f(x,y)=ar(x)+v(y)$, where $v$
is a function on $\mathbb{R}^{n}$ and $r$ is a distance
function on $N$. 
\end{enumerate}
\end{theorem}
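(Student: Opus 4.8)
The plan is to reduce Theorem~\ref{Thm:TF} to the already-established Theorem~\ref{Thm:EssentialF} and Theorem~\ref{Thm:F} by splitting into the essential and inessential cases for $\mathring{F}(M,g,q)$. If $\mathring{F}(M,g,q)$ is essential, then by Theorem~\ref{Thm:EssentialF} the space $(M,g)$ is of constant curvature, which is conclusion (1), and there is nothing further to prove.

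So the substance of the argument is the inessential case. Here, by definition (and by Proposition~\ref{Prop:TF-simple}), we have $\mathring{F}(M,g,q)=F(M,g,q')$ for some symmetric two-tensor $q'$, and moreover, since $q$ is $G$-invariant, the final statement of Proposition~\ref{Prop:TF-simple} guarantees that $q'$ is also $G$-invariant. Now $f\in\mathring{F}(M,g,q)=F(M,g,q')$ is non-constant and $q'$ is $G$-invariant, so Theorem~\ref{Thm:F} applies directly and places $(M,g)$ (together with the form of $f$) in one of the three cases (1)--(3) of that theorem, which are exactly conclusions (2)--(4) of the present statement. Combining the two cases gives the four alternatives claimed.

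The only real point requiring care is the appeal to Proposition~\ref{Prop:TF-simple} for $G$-invariance of $q'$: one should note that in the inessential case one cannot simply take $q'$ arbitrary, but the proposition's last assertion (which itself rests on Proposition~\ref{Prop:q-vs-W-inv}) tells us that \emph{any} $q'$ realizing $\mathring{F}(M,g,q)=F(M,g,q')$ is automatically $G$-invariant, so we are free to feed it into Theorem~\ref{Thm:F}. A secondary bookkeeping point is that the cases are not mutually exclusive --- a constant curvature space may also be a one-dimensional extension or split off a Euclidean factor --- but the statement only claims that at least one alternative holds, so no disjointness needs to be checked. I do not anticipate a genuine obstacle here; the theorem is essentially a corollary assembled from Theorem~\ref{Thm:EssentialF} and Theorem~\ref{Thm:F}, and the proof is short.

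\begin{proof} If $\mathring{F}(M,g,q)$ is essential, then by Theorem~\ref{Thm:EssentialF} $(M,g)$ is a space of constant curvature, giving case (1). If $\mathring{F}(M,g,q)$ is inessential, then $\mathring{F}(M,g,q)=F(M,g,q')$ for some symmetric two-tensor $q'$, and by the last statement of Proposition~\ref{Prop:TF-simple} the tensor $q'$ is $G$-invariant. Since $f\in\mathring{F}(M,g,q)=F(M,g,q')$ is non-constant and $q'$ is a $G$-invariant symmetric two-tensor, Theorem~\ref{Thm:F} shows that $(M,g)$ and $f$ fall into one of the cases (1)--(3) of Theorem~\ref{Thm:F}, which are precisely cases (2)--(4) in the statement. \end{proof}
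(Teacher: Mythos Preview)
Your proof is correct and follows essentially the same approach as the paper: split into the essential and inessential cases, invoke Theorem~\ref{Thm:EssentialF} in the former and Theorem~\ref{Thm:F} (via Proposition~\ref{Prop:TF-simple} to ensure $q'$ is $G$-invariant) in the latter. The paper's own proof is a two-sentence version of exactly this argument.
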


\begin{proof} If $\mathring{F}$ is essential, then by Theorem \ref{Thm:EssentialF}
$(M,g)$ is a space of constant curvature. If $F$ is inessential,
then $\mathring{F}(q)=F(q')$ where $q'$ is also invariant by $G$,
then Theorem \ref{Thm:F} implies the result.

\end{proof}


This allows us to prove Corollary \ref{cor:oF-structure}

\begin{proof}[Proof of Corollary \ref{cor:oF-structure}] By Theorem
\ref{Thm:EssentialF} either $(M,g)$ is constant curvature or $\mathring{F}(q) = F(q')$ and by (2) of Proposition \ref{Prop:TF-simple} $q' = q - \phi g$ for a function $\phi$.  But then since $q$ and $q$ are both invariant by the transitive group $G$ we must have $\phi$ constant. In particular, $\mathrm{div}(q') = \mathrm{div}(q)$, so $q'$ is also divergence free and the Corollary follows from applying Theorem \ref{Thm:Soliton} to $F(q')$.


\end{proof}

%

\section{$W$}

Now we consider the space $W(M,g,q)$ of solutions to equation (\ref{eqn 1.2}).
When this is a one-dimensional space we have the following
statement.

\begin{theorem} \label{Thm:Wdim1} Let $(M,g)$ be a $G$-homogeneous
manifold and let $q$ be a non-zero $G$-invariant two-tensor. If $\mathrm{dim}(W)=1$,
then $(M,g)$ is a one-dimensional extension
and $W=\{be^{ar}\mid b\in\mathbb{R}\}$. \end{theorem}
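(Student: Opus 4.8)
The plan is to mimic the proof of Proposition \ref{Prop:C}, replacing the additive structure coming from $V$ with the multiplicative structure appropriate to $W$. First I would fix a non-constant $w \in W(M,g,q)$; since $\dim W = 1$, this $w$ spans $W$. Because $q$ is $G$-invariant, for any $\gamma \in G$ we have $\gamma^* w \in W(M,g,q)$ as well: indeed $\hess(\gamma^* w) = \gamma^*(\hess w) = \gamma^*(wq) = (\gamma^* w)(\gamma^* q) = (\gamma^* w) q$. Since $W$ is one-dimensional and spanned by $w$, there is a scalar $C_\gamma \in \mathbb{R}$ with $\gamma^* w = C_\gamma w$. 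This is exactly hypothesis (2) of Lemma \ref{Lem:ssplit-rigid}, so that lemma immediately gives that $(M,g)$ is a one-dimensional extension of $H$, the subgroup fixing $w$, and moreover $w = b e^{ar}$ for constants $a, b \in \mathbb{R}$.

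Next I would check that $w$ is genuinely of this exponential form with $a \neq 0$ and conclude that $W = \{ b e^{ar} \mid b \in \mathbb{R}\}$. If $a = 0$ then $w$ would be constant, contradicting our choice of a non-constant spanning element (and also forcing $q = \hess w / w = 0$, excluded by hypothesis). So $a \neq 0$, and after scaling we may take $w = e^{ar}$; every element of the one-dimensional space $W$ is then a real multiple of this, giving the claimed description $W = \{be^{ar} \mid b \in \mathbb{R}\}$.

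The only subtlety — and the step I expect to require the most care — is verifying that the hypotheses of Lemma \ref{Lem:ssplit-rigid}(2) are fully met, in particular that the homomorphism $\gamma \mapsto C_\gamma$ genuinely has image all of $\mathbb{R}$ (equivalently, that $w$ is not $G$-invariant). If the image of $\gamma \mapsto \ln C_\gamma$ in the additive reals were trivial, then $w$ would be $G$-invariant, hence constant since $G$ acts transitively, again contradicting non-constancy. So the image is all of $\mathbb{R}$ and the argument in Lemma \ref{Lem:ssplit-rigid} applies without modification. One should also note, as in that lemma, that $w$ has no zeros: the zero set of $w$ is $G$-invariant, and if it were nonempty it would be all of $M$ by transitivity, so $w \equiv 0$; thus we may assume $w > 0$ and the logarithm is legitimate. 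With these points in place the theorem follows directly.
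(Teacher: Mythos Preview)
Your proposal is correct and follows exactly the same approach as the paper's proof: pick a non-constant $w\in W$, use $G$-invariance of $q$ and $\dim W=1$ to obtain $\gamma^* w=C_\gamma w$, then invoke Lemma~\ref{Lem:ssplit-rigid}(2). The paper's proof is a terse three sentences; your version simply fills in the details (existence of a non-constant $w$ from $q\neq 0$, nonvanishing of $w$, and $a\neq 0$) that the paper leaves implicit.
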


\begin{proof} Let $G$ be a transitive group of isometries and $w$
be a non-constant function in $W$. Since $W$ is one-dimensional
and $G$ acts on $W$, for $\gamma\in G$, we have  $w\circ\gamma=C_{\gamma}w$
for some constant $C_{\gamma}$. The theorem now follows from Lemma
\ref{Lem:ssplit-rigid}. \end{proof}

When $\mathrm{dim}(W)>1$ we have the following result of He-Petersen-Wylie.

\begin{theorem} \label{Thm:WPR} \cite[Theorem A and B and Proposition 6.5]{He-Petersen-Wylie}
Suppose $(M,g)$ is a complete Riemannian manifold
such that $\mathrm{dim}(W)=k+1$, $k\geq1$. If $k>1$ or $M$ is simply connected, then $M$ is isometric
to a warped product $B\times_{u}F$ where $F$ is a space of constant
curvature. Moreover, 
\[
W=\{w(x,y)=u(x)v(y)\mid v\in W(F,-\tau g_{F})\}
\]
If $k=1$, then $M$ is isometric to $(B\times_u \mathbb{R})/\pi_1(M) $, where $u>0$ and $\pi_1(M)$ acts by translations on $\mathbb{R}$.
\end{theorem}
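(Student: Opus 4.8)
Since this statement is essentially \cite[Theorem A and B and Proposition 6.5]{He-Petersen-Wylie}, the proof I would give follows that reference; here is the strategy. The equation $\mathrm{Hess}\,w=wq$ is linear, so $W$ is a vector space, and by \cite[Proposition 1.1]{He-Petersen-Wylie} a non-trivial $w\in W$ vanishes only on a closed set of measure zero. The plan is to build the warped-product structure on the dense open set where a fixed $w$ is nonzero, and then extend it by continuity and completeness of $(M,g)$.

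Two identities drive everything. First, for $w_1,w_2\in W$, differentiating $w_1\nabla w_2-w_2\nabla w_1$ and using the equation makes the two $wq$-terms cancel, leaving the skew-symmetric tensor $dw_1\otimes dw_2-dw_2\otimes dw_1$; hence $w_1\nabla w_2-w_2\nabla w_1$ is a Killing field, and with $\dim W=k+1\geq 2$ one obtains a $\binom{k+1}{2}$-dimensional space of Killing fields. Second, fix $0\neq w\in W$ and put $\tilde g=w^{-2}g$ on $\{w\neq 0\}$. A direct computation gives $\mathrm{Hess}^{g}(\bar w/w)=-\tfrac1w\big(dw\otimes d(\bar w/w)+d(\bar w/w)\otimes dw\big)$ for every $\bar w\in W$, and since $\tilde g=e^{2\phi}g$ with $\phi=-\ln|w|$, the conformal-change formula for the Hessian cancels exactly these terms, leaving $\mathrm{Hess}^{\tilde g}(\bar w/w)=-w\,g(\nabla w,\nabla(\bar w/w))\,\tilde g$. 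Thus $\bar w\mapsto\bar w/w$ realizes $W$ as a $(k+1)$-dimensional space of concircular functions of $\tilde g$ containing the constants, so $\dim\mathring V(\tilde g)\geq k+1$.

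One then applies the structure theory for metrics carrying a large space of concircular functions (Brinkmann and Tashiro in low dimensions; the relevant higher-dimensional analysis of $\mathring V$ is in \cite{OS} and \cite{He-Petersen-Wylie}): $(\{w\neq0\},\tilde g)$, hence $(M,g)$ after undoing the conformal change by $w^2$, is a warped product $B\times_u F$ whose fiber $F$ is a \emph{space of constant curvature} of dimension $k$ --- the Killing fields $w_i\nabla w_j-w_j\nabla w_i$ restrict on each fiber to a space of Killing fields large enough to force this rigidity. The level sets of $w$, the warping function $u$, and the constant $\tau$ emerge from the construction, and restricting $\mathrm{Hess}_M w=wq$ to the umbilic fibers identifies $q$ there as a multiple of $g_F$ and gives $W=\{u(x)v(y):v\in W(F,-\tau g_F)\}$. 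The delicate point --- and the main obstacle --- is exactly this recognition step: promoting ``$(\{w\neq0\},\tilde g)$ carries a $(k+1)$-dimensional space of concircular functions'' to ``the fiber is a genuine constant-curvature space,'' ruling out the intermediate warped behaviour, and then using completeness of $(M,g)$ to glue the local warped-product pieces (built from the possibly incomplete $\tilde g$) into one global splitting.

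Finally, the case $k=1$ follows by passing to the universal cover $\widetilde M$: there $\dim W(\widetilde M)\geq 2$ and $\widetilde M$ is simply connected, so the case just treated gives $\widetilde M=B\times_u\mathbb R$ with one-dimensional fiber $\mathbb R$, and $u>0$ since a collapsing $\mathbb R$-factor is incompatible with completeness, with $W(\widetilde M)=\{u(x)v(y):v\in W(\mathbb R,-\tau g_{\mathbb R})\}$. Deck transformations of $\widetilde M\to M$ preserve $W(\widetilde M)$, hence the distinguished $\mathbb R$-direction and the function $u$ on $B$; being isometries compatible with this data, they must act on the $\mathbb R$-factor by translations, so $M=(B\times_u\mathbb R)/\pi_1(M)$.
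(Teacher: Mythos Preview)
The paper does not prove this theorem; it is quoted verbatim from \cite{He-Petersen-Wylie} (as Theorem A, Theorem B, and Proposition 6.5 there) and used as a black box in the proof of Theorem~\ref{Thm:W}. There is therefore no ``paper's own proof'' to compare against.

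That said, your sketch is a faithful outline of the argument in \cite{He-Petersen-Wylie}: the two driving mechanisms you identify --- that $w_1\nabla w_2-w_2\nabla w_1$ is Killing (indeed $\nabla(w_1\nabla w_2-w_2\nabla w_1)=dw_1\otimes dw_2-dw_2\otimes dw_1$, which is skew), and that the conformal change $\tilde g=w^{-2}g$ sends $W$ to a $(k+1)$-dimensional subspace of $\mathring V(\tilde g)$ --- are exactly the ingredients used there, and the passage through the Brinkmann--Tashiro/Osgood--Stowe structure theory for concircular functions is the correct route to the warped-product splitting with constant-curvature fiber. Your handling of the $k=1$ case via the universal cover and the action of $\pi_1(M)$ preserving $W$ is also the right idea. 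The one place your sketch is honest about being soft is the ``recognition step'' (promoting the local concircular structure to a global warped product with constant-curvature fiber and identifying $\tau$); that is indeed where most of the work in \cite{He-Petersen-Wylie} lies, and you would need to invoke those results rather than redo them.
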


Before applying these theorems, we need some basic results about warped
products which are homogeneous.

By a warped product, $M=B\times_{u}F$ we mean a metric of the form
$g_{M}=g_{B}+u^{2}g_{F}$ where $u:B\to\mathbb{R}$. In general, it
is possible to obtain a smooth metric $g_{M}$ even in case $u$ vanishes
on the boundary of $B$. However, in this paper we will be able to
conclude that $u>0$ and $M$ is diffeomorphic to $B\times F$. Let
$\gamma$ be a map of $B\times_{u}F$, we will say that $\gamma$
respects the warped product splitting if $\gamma=\gamma_{1}\times\gamma_{2}$
with $\gamma_{1}:B\to B$ and $\gamma_{2}:F\to F$. A group of isometries
is said to respect the splitting if all its elements do. We have the
following simple result about the isometries of a warped product that
respect the splitting.

\begin{proposition}  \cite[Lemma 5.1]{He-Petersen-Wylie-Unique}\label{Prop:IsomWP} Suppose $M=B\times_{u}F$
with $u>0$, then a map $\gamma$ which respects the splitting is
an isometry of $g_{M}$ if and only if (1) $\gamma_{1}\in\mathrm{Isom}(g_{B})$,
(2) there is a $C\in\mathbb{R}^{+}$ such that $\gamma_{1}^{*}(u)=Cu$,
and (3) $\gamma_{2}$ is a $C$-homothety of $g_{F}$. \end{proposition}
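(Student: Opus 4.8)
The plan is to prove both implications by a single direct computation with the warped product metric $g_{M}=g_{B}+u^{2}g_{F}$, using that a map $\gamma=\gamma_{1}\times\gamma_{2}$ which respects the splitting has block-diagonal differential with respect to $T_{(p,q)}M=T_{p}B\oplus T_{q}F$. Concretely, for $X,X'\in T_{p}B$ and $Y,Y'\in T_{q}F$,
\[
(\gamma^{*}g_{M})\big((X,Y),(X',Y')\big)=(\gamma_{1}^{*}g_{B})(X,X')+u(\gamma_{1}(p))^{2}\,(\gamma_{2}^{*}g_{F})(Y,Y'),
\]
while the mixed terms $g_{M}\big((X,0),(0,Y)\big)$ vanish identically both for $g_{M}$ and for $\gamma^{*}g_{M}$, so they contribute nothing. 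First I would treat the converse: assuming (1)--(3) and inserting $\gamma_{1}^{*}g_{B}=g_{B}$, $u\circ\gamma_{1}=Cu$, and $\gamma_{2}^{*}g_{F}=C^{-2}g_{F}$ collapses the right-hand side to $g_{B}(X,X')+u(p)^{2}g_{F}(Y,Y')=g_{M}\big((X,Y),(X',Y')\big)$, so $\gamma$ is an isometry.

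For the forward direction I would exploit the same identity in reverse. Setting $Y=Y'=0$ in $\gamma^{*}g_{M}=g_{M}$ gives $\gamma_{1}^{*}g_{B}=g_{B}$, which is (1). Setting $X=X'=0$ gives, at each $p\in B$ and $q\in F$,
\[
u(\gamma_{1}(p))^{2}\,(\gamma_{2}^{*}g_{F})|_{q}(Y,Y')=u(p)^{2}\,(g_{F})|_{q}(Y,Y')\qquad\text{for all }Y,Y'\in T_{q}F.
\]
The only step that is not pure bookkeeping is pulling the constant out of this: fixing $q$ and a pair $Y,Y'$ with $(g_{F})|_{q}(Y,Y')\neq0$ (which exists when $\dim F\geq1$; the case $\dim F=0$ is vacuous), the ratio $u(p)^{2}/u(\gamma_{1}(p))^{2}$ must equal a quantity independent of $p$, hence is a fixed positive number because $u>0$; its positive square root is the constant $C\in\mathbb{R}^{+}$ with $u\circ\gamma_{1}=Cu$, i.e.\ $\gamma_{1}^{*}u=Cu$, which is (2). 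Substituting this back into the displayed equation gives $\gamma_{2}^{*}g_{F}=C^{-2}g_{F}$, i.e.\ $\gamma_{2}$ is a $C$-homothety of $g_{F}$, which is (3).

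I do not expect a real obstacle, as the computation is elementary; the hypotheses that genuinely get used are $u>0$ (to divide, to extract a positive square root, and to make sense of $M$ as the product $B\times F$ on which $\gamma_{1}\times\gamma_{2}$ is defined) and the splitting assumption, which is precisely what makes $d\gamma$ block diagonal so that the $B$- and $F$-components of $\gamma^{*}g_{M}=g_{M}$ decouple. Since this statement is \cite[Lemma 5.1]{He-Petersen-Wylie-Unique}, one may alternatively just cite it and omit the argument.
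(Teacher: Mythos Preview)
Your argument is correct and is the standard direct computation. Note, however, that the paper does not actually supply its own proof of this proposition: it is stated with the citation to \cite[Lemma~5.1]{He-Petersen-Wylie-Unique} and then used without further justification. So there is nothing to compare against here; your write-up is essentially the proof one finds in the cited reference, and, as you observe at the end, simply citing that lemma would also suffice.
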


Let $\mathrm{Isom}(B)_{u}$ be the isometries of $g_{B}$ that preserve
$u$. Proposition \ref{Prop:IsomWP} implies that $\mathrm{Isom}(B)_{u}\times\mathrm{Isom}(F)$
is a group of isometries that respects the splitting. Recall also
that a complete Riemannian manifold admits a $C$-homothety with
$C\neq1$ if and only if it is a Euclidean space. Therefore, if $F$
is not a Euclidean space, then any subgroup of isometries that preserves
the splitting is a subgroup of $\mathrm{Isom}(B)_{u}\times\mathrm{Isom}(F)$.
In general, a warped product can have isometries that do not respect
the splitting, so we will have to justify this assumption when we
apply the Proposition below.


Combining Proposition \ref{Prop:IsomWP} with Lemma \ref{Lem:ssplit-rigid}
gives us the following characterization of when a warped product admits
a transitive group of isometries which preserves the splitting.

\begin{lemma} \label{Lemma:WPHomogeneous} Let $M=B\times_{u}F$
with $u>0$ be a warped product manifold which admits a transitive
group of isometries, $G$, that respects the splitting. Then either 
\begin{enumerate}
\item $M=B\times F$ and $u$ is constant, or 
\item $M$ is a one-dimensional extension such that 
\begin{align*}
g_{M}=dr^{2}+g_{r}+u^2g_{\mathbb{R}^{k}}\quad\text{and}\quad u=be^{ar}.
\end{align*}
\end{enumerate}
\end{lemma}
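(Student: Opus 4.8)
The plan is to decompose a transitive splitting-respecting group $G$ acting on $M = B \times_u F$ and apply Proposition \ref{Prop:IsomWP} together with Lemma \ref{Lem:ssplit-rigid}. First I would observe that since every $\gamma \in G$ is of the form $\gamma = \gamma_1 \times \gamma_2$ with $\gamma_1 \in \mathrm{Isom}(g_B)$, the projection $\gamma \mapsto \gamma_1$ produces a subgroup $G_1 \subset \mathrm{Isom}(g_B)$ which acts transitively on $B$ (because $G$ acts transitively on $M$, hence on the $B$-factor of $M \cong B \times F$). By part (2) of Proposition \ref{Prop:IsomWP}, for each $\gamma_1$ arising this way there is a constant $C_\gamma \in \mathbb{R}^+$ with $\gamma_1^*(u) = C_\gamma u$. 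So the function $u$ on the $B$-homogeneous space $B$ is multiplied by a positive constant under every element of the transitive group $G_1$.

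Next I would distinguish two cases according to whether $u$ is constant on $B$. If $u$ is constant, then after rescaling $g_F$ we may take $u \equiv 1$, so $g_M = g_B + g_F$ is a direct product and we are in case (1). If $u$ is non-constant, then I would apply case (2) of Lemma \ref{Lem:ssplit-rigid} to the function $u$ on the $G_1$-homogeneous space $(B, g_B)$: the relation $\gamma_1^* u = C_\gamma u$ is exactly the hypothesis of that case. The Lemma then tells us that $(B, g_B)$ is a one-dimensional extension, $g_B = dr^2 + g_r$, and that $u = b e^{ar}$ for constants $a, b$ with $a \neq 0$ (since $u$ is non-constant) and $b > 0$. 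Substituting into $g_M = g_B + u^2 g_F$ gives $g_M = dr^2 + g_r + u^2 g_F$, which is the asserted form in case (2) (writing $g_{\mathbb{R}^k}$ for $g_F$ when $F = \mathbb{R}^k$, as will be the relevant situation in the applications); moreover the semidirect product structure $G_1 = H \rtimes \mathbb{R}$ from the Lemma lifts to a splitting structure on $M$, exhibiting $M$ itself as a one-dimensional extension.

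The one technical point that needs care, rather than any deep obstacle, is verifying that $G_1$ genuinely acts transitively on $(B, g_B)$ and that the level sets of $r$ in $B$ are homogeneous under the kernel subgroup $H$; this follows from the diffeomorphism $M \cong B \times F$ (valid since $u > 0$ is assumed) and the fact that a splitting-respecting transitive action on a product projects to transitive actions on each factor. One should also note that $G_1$ need not equal $\mathrm{Isom}(B)_u$ — it is merely \emph{some} transitive group of isometries of $g_B$ along which $u$ transforms by positive scalars — but this is all that Lemma \ref{Lem:ssplit-rigid} requires. With these observations in place, the proof is essentially a direct citation of the two results, and I do not anticipate a genuinely hard step.
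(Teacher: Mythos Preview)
Your proposal is correct and follows essentially the same approach as the paper: project the splitting-respecting transitive group $G$ to a transitive subgroup of $\mathrm{Isom}(g_B)$, invoke Proposition~\ref{Prop:IsomWP} to obtain $\gamma_1^*u = C_\gamma u$, and then apply case~(2) of Lemma~\ref{Lem:ssplit-rigid} to conclude that either $u$ is constant or $B$ is a one-dimensional extension with $u = be^{ar}$. The paper's proof is simply a terser version of what you wrote, omitting the technical remarks you include about transitivity of the projected action and the fact that $G_1$ need not coincide with $\mathrm{Isom}(B)_u$.
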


\begin{proof} Since $G$ splits we have the projection $\pi:G\to\mathrm{Isom}(B)$
given by $\pi(\gamma)=\gamma_{1}$. Since $G$ acts transitively on
$M$, the image $\pi(G)$ acts transitively on $B$. By Proposition
\ref{Prop:IsomWP}, for all $\gamma_{1}\in\pi(G)$ there is a $C$
such that $\gamma_{1}^{*}(u)=Cu$, so by Lemma \ref{Lem:ssplit-rigid}
case (2) either $u$ is constant or $B$ is a one-dimensional extension,
$g_{B}=dr^{2}+g_{r}$ and $u=be^{ar}$. \end{proof}

\begin{theorem} \label{Thm:W} Let $(M,g)$ be a $G$-homogeneous
manifold and let $q$ be a $G$-invariant two-tensor. If $W$ is nontrivial,
then $(M^{n},g)$ is isometric to one of the following 
\begin{enumerate}
\item a space of constant curvature with $\dim W=n+1$, 
\item the product of a homogeneous space and a space of constant curvature
with $W$ consisting of functions on the constant curvature factor
with $2\leq\dim W\leq n$
\item the quotient of the product of a homogeneous space and $\mathbb{R}$,  $(H\times \mathbb{R})/\pi_1(M)$, 
with $W=\{w:\mathbb{R}\rightarrow \mathbb{R} \mid w''=\tau w\}$ where $\tau<0$ is constant, or 
\item a one-dimensional extension with $\dim W=1$. 
\end{enumerate}
\end{theorem}

\begin{proof} If $\mathrm{dim}(W)=1$, then we obtain a one-dimensional
extension by Theorem \ref{Thm:Wdim1}. Assume $M$ is not a space of constant curvature.  Then, if $\mathrm{dim}(W)>2$ or if $M$ is simply connected and $\mathrm{dim}(W)=2$, then from Theorem \ref{Thm:WPR} we obtain the warped
product splitting $M=B\times_{u}F$ and we have that all $w$ are
of the form $w(x,y)=u(x)v(y)$. First we want to show that $u>0$.
To see this suppose that $u(x_{0})=0$ for some $x_{0}$, then $w(x_{0},y)=u(x_{0})v(y)=0$,
so there is a singular point where all functions in $w$ vanish. But
since $G$ acts on $W$ and is transitive this would imply that all
functions in $W$ are zero, a contradiction.

Next we observe that $G$ respects the splitting $M=B\times_{u}F$.
In fact, the tangent distributions to the leaves $\{b\}\times F$
are given by $\mathcal{F}=\{\nabla w\mid w\in W_{p}\}$ where $W_{p}=\{w\in W\mid w(p)=0\}$.
Since $G$ preserves $W$ it must also preserve $\mathcal{F}$ as
well as the orthogonal distribution.

In case $M$ is not simply connected and $\mathrm{dim}(W)=2$ we reach the 
same conclusion for the universal cover of $M$. Here $W=\{w:\mathbb{R}\rightarrow \mathbb{R} \mid w''=\tau w\}$ becomes a space of functions on $\mathbb{R}$ that is invariant under a cyclic group of translations. 
Since our quadratic form is invariant under a homogeneous group the function $\tau$ must be constant.

We can now apply Lemma \ref{Lemma:WPHomogeneous} to see that either
$M$ is a one-dimensional extension, a direct product, or the universal cover is a direct product with $\mathbb{R}$. Once $M$ or its universal cover
is a direct product we have that $u$ is constant, so $w=u(x)v(y)$
shows that all the functions in $W$ are only on the constant curvature
factor, $F$.
 \end{proof}
Now we consider what more we can say in the case that $q$ is assumed to be divergence free or Codazzi.  Note that is cases (1)-(3) of Theorem \ref{Thm:W} $q$ is either a constant multiple of the metric or, on the products,  a constant sum of the metrics on the factors.  In particular, $q$ is both divergence free and Codazzi.  We show that the Codazzi property in fact characterizes these examples, while there are many more examples which are divergence free.  First we establish some properties of the metrics in case (4) of the previous theorem. 
\begin{proposition}Let $w=e^{ar},\,a>0$, where $r:M\rightarrow\mathbb{R}$
is a distance function. If $q=\frac{1}{w}\hess w$ , then 
\begin{align}
\label{5.1}q&=a^{2}dr^{2}+a\hess r,\\
\label{5.2} \left(\nabla_{X}Q\right)\left(\nabla w\right) &=  a^{2}wQ\left(X\right)-wQ^{2}\left(X\right)
\end{align}
where $Q$ dual $(1,1)$ tensor to $q$.  If $q$ is divergence free we further have: 
\begin{align}
\label{5.3} \tr q^{2} & =  \tr q,\\
\label{5.4} \left|\hess r\right|^{2} & =  a\Delta r.
\end{align}
In particular, if $q$ is invariant under a transitive group of isometries,
then so is $\hess r$.

\end{proposition}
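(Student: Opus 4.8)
The plan is to establish the three assertions of the proposition by direct computation, as none of them is deep. For the formula $q = a^2\,dr^2 + a\,\hess r$: from $w = e^{ar}$ we get $dw = aw\,dr$, and differentiating once more (using $dw = aw\,dr$ again to collapse the product term) yields $\hess w = a^2 w\,dr\otimes dr + aw\,\hess r$; dividing by $w$ gives the claim, with the convention $dr^2 = dr\otimes dr$. Since $r$ is a distance function, $|\nabla r|\equiv 1$, hence $\nabla_{\nabla r}\nabla r = 0$ and so $\hess r(\nabla r,\cdot) = 0$. Consequently $\tr q = a^2 + a\,\Delta r$, and, viewing $q$ as a $g$-selfadjoint endomorphism, $\tr(q^2) = a^4|dr^2|^2 + 2a^3\,\hess r(\nabla r,\nabla r) + a^2|\hess r|^2 = a^4 + a^2|\hess r|^2$, the cross term dropping out.

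For the consequences of $\div q = 0$, I would compute $\div q$ from the formula just obtained. From $\hess r(\nabla r,\cdot) = 0$ one has $\div(dr^2) = (\Delta r)\,dr$, and the commutation identity already invoked in the proof of Proposition \ref{Prop:ssplit-properties} gives $\div(\hess r) = d(\Delta r) + \ric(\nabla r,\cdot)$. Hence $0 = \div q = a^2(\Delta r)\,dr + a\,d(\Delta r) + a\,\ric(\nabla r,\cdot)$. Evaluating this $1$-form on $\nabla r$ gives
\[
0 = a^2\Delta r + a\,g(\nabla\Delta r,\nabla r) + a\,\ric(\nabla r,\nabla r).
\]
Now feed in the Bochner formula for $r$, which (since $|\nabla r|^2\equiv 1$) reads $0 = |\hess r|^2 + g(\nabla\Delta r,\nabla r) + \ric(\nabla r,\nabla r)$: subtracting $a$ times this from the displayed identity cancels both the Ricci term and the $\nabla\Delta r$ term and leaves $|\hess r|^2 = a\,\Delta r$. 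Substituting this back into the first-step expressions for $\tr q$ and $\tr(q^2)$ then yields the asserted trace relation between them.

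The only genuine idea in the preceding paragraph is to contract the divergence-free equation against $\nabla r$ and play it off the Bochner formula for $r$; everything else is bookkeeping with $\hess r(\nabla r,\cdot) = 0$, so I expect that contraction to be the main (and still quite modest) obstacle.

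Finally, suppose $q$ is invariant under a transitive group $G$ of isometries. Then $G$ acts on the solution space $W(q)$, which in the situation at hand is the one-dimensional space spanned by $w = e^{ar}$; hence for each $\gamma\in G$ there is a constant $C_\gamma > 0$ with $\gamma^* w = C_\gamma w$, and therefore $\gamma^* r = r + a^{-1}\log C_\gamma$. Thus $\gamma^*(\hess r) = \hess(\gamma^* r) = \hess r$, so $\hess r$ is $G$-invariant as claimed. (Equivalently, $\gamma^*(dr^2) = dr^2$, and $\hess r = a^{-1}(q - a^2\,dr^2)$ is then a difference of two $G$-invariant tensors.)
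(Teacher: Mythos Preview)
Your proof is correct. The main substantive difference from the paper's argument lies in how you extract the identity $|\hess r|^2 = a\Delta r$ from $\div q = 0$, and in which of the two middle formulas you derive first.

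The paper type-changes $q$ to a self-adjoint endomorphism $Q$ (so $wQ(X) = \nabla_X\nabla w$), observes from the first formula that $Q(\nabla w) = a^2\nabla w$, and computes
\[
(\nabla_X Q)(\nabla w) = \nabla_X\bigl(Q(\nabla w)\bigr) - Q(\nabla_X\nabla w) = a^2 wQ(X) - wQ^2(X).
\]
Tracing over $X$ gives $(\div q)(\nabla w) = w\bigl(a^2\tr q - \tr q^2\bigr)$, so $\div q = 0$ yields the trace relation first, and $|\hess r|^2 = a\Delta r$ then follows from the expressions you wrote for $\tr q$ and $\tr q^2$. Your route instead expands $\div q$ term by term via $\div(dr^2)$ and the Ricci commutation identity for $\div(\hess r)$, contracts with $\nabla r$, and plays the result off Bochner to obtain $|\hess r|^2 = a\Delta r$ first and the trace relation second. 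The paper's packaging is a bit more economical---in your version the Ricci commutation identity and Bochner essentially undo one another---whereas your approach has the minor bonus of producing the full one-form $\div q$, not just its contraction with $\nabla r$. (Both computations actually yield $\tr q^2 = a^2\tr q$; the ``$\tr q^2 = \tr q$'' in the statement is evidently missing the factor $a^2$.)

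For the final invariance claim you assume $\dim W(q) = 1$ in order to conclude $\gamma^*w = C_\gamma w$. That hypothesis is not literally part of the proposition as stated, but it is the standing assumption of the surrounding discussion (the paragraph immediately preceding the proposition restricts attention to exactly this case), so your argument is fine in context. The paper's proof does not address this last sentence at all.
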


\begin{proof} (\ref{5.1}) follows directly from $q=\frac{1}{w}\hess w$
as $w=e^{ar}$. To prove (\ref{5.2})  note that we have that $wQ\left(X\right)=\nabla_{X}\nabla w$.
so we obtain 
\begin{eqnarray*}
\left(\nabla_{X}Q\right)\left(\nabla w\right) & = & \nabla_{X}Q\left(\nabla w\right)-Q\left(\nabla_{X}\nabla w\right)\\
 & = & a^{2}\nabla_{X}\nabla w-wQ^{2}\left(X\right)\\
 & = & a^{2}wQ\left(X\right)-wQ^{2}\left(X\right),
\end{eqnarray*}
where the formula $Q\left(\nabla w\right)=a^{2}\nabla w$ follows
from (\ref{5.1}). 

Tracing (\ref{5.1}) also gives us 
\[
\tr q=a^{2}+a\Delta r
\]
and 
\[
\tr q^{2}=\left|q\right|^{2}=a^{4}+a^{2}\left|\hess\right|^{2}.
\]
Thus (\ref{5.4}) follows from (\ref{5.3}).  To see (\ref{5.3}), consider the trace of (\ref{5.2}) 
\[
\mathrm{div}q\left(\nabla w\right)=w\left(a^{2}\tr q-\tr q^{2}\right),
\]
which implies (\ref{5.3}).

\end{proof}

We now show the characterization in the Codazzi case. 

\begin{theorem}
With $(M,g)$ and $q$ as in Theorem \ref{Thm:W},  $q$ is Codazzi if and only if $(M,g)$ is isometric to one of the cases (1)-(3). 
\end{theorem}

\begin{proof}
The fact that $q$ is Codazzi, (\ref{5.2}) and  $\frac{\nabla w}{w} = a \nabla r$, implies that 
\begin{align}  \label{5.5} a(\nabla_{\nabla r} q)(X,X) = a^2 q(X,X) - q^2(X,X). \end{align}
At a point $p$, let $X$ be an eigenvector for $q$ perpendicular to $\nabla r$, with eigenvalue $\lambda$.  Let $\beta$ be the geodesic at $p$  in the $\nabla r$ direction and let $\phi_t$ be a smooth curve of isometries in $G$ such that $\phi_t(p) = \beta(t)$.  Define $X_t = d\phi_t(X)$.  Then $X_t$ is a vector field along $\beta$ with $|X_t| =1$. Since $\phi_t$ preserves $\nabla r$ and $q$ is invariant under $\phi_t$ we also have that $X_t \perp \nabla r$ and $X_t$ an eigenvector of $q$ with eigenvalue $\lambda$ for all $t$.  Using $X_t$ we can then calculate, 
\begin{align*}
(\nabla_{\nabla r} q)(X,X) &= D_{\nabla r} (\lambda) - 2 q(\nabla_{\nabla r} X, X) \\
&= D_{\nabla r} (\lambda) -2 \lambda D_{\nabla r} |X_t|^2 \\
&=0. 
\end{align*}
Plugging this back into (\ref{5.5}) gives that either $\lambda = 0$ or $\lambda = a^2$ so  $q$ has only two possible eigenvalues.  By invariance of $q$,  the multiplicity of the eigenvalues is constant, so the corresponding eigenspace decomposition gives us a pair of orthogonal distributions on $M$.  Moreover, since $q$ is Codazzi, these eigendistributions are integrable (See Chapter 16 of \cite{Besse}). Consequently, we can write the one dimensional extension, $M$, as 
\begin{align*}
M & = \mathbb{R} \times N_1 \times N_2 \\
g &= dr^2 + (g^1)_r + (g^2)_r 
\end{align*}
where the tangent space to $N_1$ corresponds to the eigenvectors for $q$ with eigenvalue $a^2$ and the tangent space to $N_2$ corresponds to nullvectors for $q$.  But then (\ref{5.1}) implies that 
\begin{align*}
\mathrm{Hess}r = a (g^1)_r
\end{align*}
Which implies that we have a warped product splitting
\begin{align*}
g = dr^2 + e^{2ar} (g^1)_0 + g^2_0.
\end{align*} Since the group $G$ preserves the eigenspaces of $q$ it preserves the warped product splitting in the sense of Lemma \ref{Lemma:WPHomogeneous}  and then the Lemma implies that $(g^1)_0$ is a flat metric on Euclidean space. Then we have that $ dr^2 + e^{2ar} (g^1)_0$ is a hyperbolic metric. 

Putting this all together we have three cases, if the only eigenvalue if $q$ is $a^2$ then $M$ is hyperbolic space which is contained in (1) of Theorem \ref{Thm:W}, if the only eigenvalue of $q$ is $0$ then we have a direct product as in case (3) of Theroem \ref{Thm:W}, finally if both eigevnvalues occur we have a product of a homogeneous space and hyperbolic space as in case (2). 
\end{proof}

Now we consider the divergence free case.  The only case we need
to consider is evidently when $\dim W=1$ and is spanned by $w=e^{ar},\,a>0$,
where $r:M\rightarrow\mathbb{R}$ is a distance function. An interesting
special case occurs when 
\[
\hess w=\frac{w}{m}\left(\ric-\lambda g\right).
\]
This is the so-called quasi-Einstein or warped product Einstein equation
as it is the equation on $B$ that makes a warped product $B\times_{w}F$
an Einstein metric when $F$ is an appropriately chosen Einstein metric. Interestingly
there are many such examples that are 1-dimensional extensions of
algebraic solitons (see \cite{He-Petersen-Wylie-Homogeneous}, \cite{Lafuente}).
The quasi-Einstein equation is  studied in more detail in section 7.

With these examples in mind we cannot expect the same rigid behavior in the divergence free case.
In fact, we will produce examples of one-dimensional extensions $G=H\rtimes\mathbb{R}$
such that $H$ is not an algebraic soliton and $\frac{1}{w}\hess w$
is divergence free, where $w=e^{ar}$.

Before discussing the examples, we identify some situations where we do obtain products
and warped products.

\begin{corollary}\label{cor:div_rigid}Let $w=e^{ar},\,a>0$, where
$r:M\rightarrow\mathbb{R}$ is a distance function on a homogeneous
space $\left(M,g\right)$. If $q=\frac{1}{w}\hess w$ is invariant
under a transitive group of isometries and divergence free, then $\Delta r\in\left[0,\left(n-1\right)a\right]$.
When $\Delta r=0$, the metric splits as a product $g=dr^{2}+g_{0}$,
and when $\Delta r=\left(n-1\right)a\neq0$ the metric is isometric
to $H^{n}\left(-a^{2}\right)$. Moreover, these are the only possibilities
for $g$ to be a warped product of the type $dr^{2}+\rho^{2}\left(r\right)g_{N}$,
where $\rho:\bbR\rightarrow\left(0,\infty\right)$.

\end{corollary}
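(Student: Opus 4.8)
The plan is to exploit the two identities from the previous proposition, namely $\left|\hess r\right|^{2} = a\Delta r$ and $\tr q^{2} = \tr q$, together with the fact that $\hess r$ is invariant under a transitive group of isometries, hence has constant eigenvalues. First I would diagonalize the symmetric $(1,1)$-tensor $S = \hess r$ at a point; since $S(\nabla r) = 0$ (as $\left|\nabla r\right|^{2}$ is constant and $r$ is a distance function), one eigenvalue is $0$, and the remaining $n-1$ eigenvalues $\mu_{1},\dots,\mu_{n-1}$ are constants summing to $\Delta r$. The identity $\left|\hess r\right|^{2} = a\Delta r$ then reads $\sum \mu_{i}^{2} = a\sum \mu_{i}$. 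Writing $\mu_{i} = \frac{a}{2} + t_{i}$ converts this to $\sum t_{i}^{2} = \frac{(n-1)a^{2}}{4}$, so each $\left|t_{i}\right| \leq \frac{a}{2}\sqrt{n-1}$; combined with Cauchy--Schwarz applied to $\Delta r = \sum \mu_{i}$ one gets the crude bound, but to get the sharp bound $\Delta r \in [0,(n-1)a]$ I would instead use the pointwise inequality $\mu_{i}^{2} \geq 0$ to get $a\Delta r = \sum \mu_{i}^{2} \geq 0$ hence $\Delta r \geq 0$, and the inequality $\mu_{i}(\mu_{i}-a) \leq 0$ would need to be justified — this is where I expect the real work to be. The cleanest route: observe $\sum \mu_{i}^{2} = a \sum \mu_{i}$ forces $\sum \mu_{i}(\mu_{i} - a) = 0$, and since the $\mu_{i}$ are eigenvalues of a parallel-along-$\nabla r$ shape-operator on a homogeneous space one can argue each factor $\mu_{i}(\mu_{i}-a)$ has a fixed sign, or alternatively apply the Cauchy--Schwarz bound $\left(\sum\mu_i\right)^2 \le (n-1)\sum\mu_i^2 = (n-1)a\sum\mu_i$ directly, yielding $\Delta r \le (n-1)a$ and equality iff all $\mu_{i}$ are equal.

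For the equality cases: when $\Delta r = 0$, the identity $\left|\hess r\right|^{2} = a\Delta r = 0$ forces $\hess r = 0$, so $\nabla r$ is a parallel unit field and $(M,g)$ splits isometrically as $dr^{2} + g_{0}$ by the de Rham-type argument already invoked in Proposition \ref{Prop:ssplit-properties}. When $\Delta r = (n-1)a \neq 0$, the Cauchy--Schwarz equality case forces all $\mu_{i} = a$, so $\hess r = a(g - dr^{2})$, i.e. $r$ is a distance function whose level sets are totally umbilic with constant principal curvature $a$. I would then integrate the radial curvature equation: from $\hess r = a(g - dr^{2})$ the metric takes the form $dr^{2} + e^{2ar}g_{N}$ for a fixed metric $g_{N}$ on a level set, and the Riccati/Jacobi equation then shows that $g_{N}$ must be flat (since all level sets are totally umbilic hyperspheres with the same curvature, the curvature of the leaves is pinned down), giving the warped product $dr^{2} + e^{2ar}g_{\mathbb{R}^{n-1}}$, which is precisely $H^{n}(-a^{2})$ by Example \ref{example:Hyperbolic}.

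For the final ``moreover'' claim, suppose $g = dr^{2} + \rho^{2}(r)g_{N}$ with $\rho > 0$. Then $\hess r = \frac{\rho'}{\rho}(g - dr^{2})$, so $\Delta r = (n-1)\frac{\rho'}{\rho}$ and $\left|\hess r\right|^{2} = (n-1)\left(\frac{\rho'}{\rho}\right)^{2}$. The divergence-free identity $\left|\hess r\right|^{2} = a\Delta r$ becomes $(n-1)\left(\frac{\rho'}{\rho}\right)^{2} = a(n-1)\frac{\rho'}{\rho}$, forcing $\frac{\rho'}{\rho} \in \{0, a\}$ at every point; by continuity $\frac{\rho'}{\rho}$ is identically $0$ (so $\rho$ constant, the product case, $\Delta r = 0$) or identically $a$ (so $\rho = be^{ar}$, the hyperbolic case, $\Delta r = (n-1)a$). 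The main obstacle, as flagged above, is pinning the sharp endpoint $\Delta r \le (n-1)a$ and its rigidity cleanly; everything else is a direct consequence of the two algebraic identities already established plus the standard warped-product formulas for $\hess r$.
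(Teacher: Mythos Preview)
Your derivation of $\Delta r \in [0,(n-1)a]$ via Cauchy--Schwarz, the product splitting when $\Delta r = 0$, and the ``moreover'' clause are all correct and essentially match the paper's proof (the detours through the $t_i$ substitution and the unproven ``each $\mu_i(\mu_i - a)$ has fixed sign'' claim are unnecessary, but you do land on the right argument).

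The gap is in the case $\Delta r = (n-1)a \neq 0$. You correctly obtain $\hess r = a(g - dr^2)$ and integrate to $g = dr^2 + e^{2ar}g_N$, but your justification that $g_N$ is flat is not valid: the Riccati equation along $\nabla r$ only pins the \emph{radial} sectional curvatures to $-a^2$, and totally umbilic level sets with constant principal curvature $a$ arise in $dr^2 + e^{2ar}g_N$ for \emph{any} $g_N$, so nothing yet constrains the intrinsic geometry of the leaves (calling them ``hyperspheres'' presupposes the conclusion). What is missing is an appeal to the homogeneity of $(M,g)$. The paper's route here is different: from $\hess r = a(g-dr^2)$ one gets $q = a^2 dr^2 + a\hess r = a^2 g$, so $\hess w = a^2 w g$ and $\nabla w$ is a non-Killing conformal field; Proposition~\ref{Prop:CField} then forces the homogeneous space to be locally conformally flat, and Takagi's classification (Theorem~\ref{thm:Takagi}) identifies it as $H^n(-a^2)$. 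If you prefer a direct curvature computation, note that the tangential sectional curvatures of $g$ equal $e^{-2ar}$ times those of $g_N$ minus $a^2$, and homogeneity (which bounds curvature uniformly as $r\to-\infty$) then forces $g_N$ to be flat; either way, homogeneity must enter explicitly at this step.
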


\begin{proof}From the last formula in the previous proposition and
Cauchy-Schwarz we have 
\[
\frac{(\Delta r)^{2}}{n-1}\leq|\hess r|^{2}=a\Delta r.
\]
This establishes the range of possible values for $\Delta r$. When
$\Delta r=0$, the Hessian vanishes and we obtain a product metric.
While when $\Delta r$ is maximal we must have that $\hess r=ag_{r}$,
where $g=dr^{2}+g_{r}$. This shows that $L_{\nabla r}g_{r}=2ag_{r}$
and consequently that $g_{r}=e^{2ar}g_{0}$. This shows that 
\[
q=a^{2}dr^{2}+a\hess r=a^{2}g.
\]
When $a\neq0$ this shows that $\nabla e^{ar}$ is a conformal field
that is not a Killing field. As the metric is homogeneous we conclude
that it must be locally conformally flat. Theorem \ref{thm:Takagi}
then shows that the space is isometric to $H^{n}\left(-a^{2}\right)$.

Finally if we assume that $g=dr^{2}+\rho^{2}\left(r\right)g_{N}$,
then $\hess r=\frac{\rho'}{\rho}g_{r}$. In particular, the Cauchy-Schwarz
inequality $\frac{(\Delta r)^{2}}{n-1}\leq|\hess r|^{2}$ must be
an equality. This forces us to be in one of the two previous situations.

\end{proof}

The goal for the remainder of the section is to construct examples
indicating that there is little hope for classifying the general situation
where $q$ is divergence free. To that end it is convenient to use the following
condition.

\begin{proposition} Assume that $G$ is a transitive group of isometries
on $M$ and that $r:M\rightarrow\mathbb{R}$ is a smooth distance
function whose Hessian is invariant under $G$. If the hypersurface
$N=\left\{ x\in M\mid r\left(x\right)=0\right\} $, has divergence
free second fundamental form at one point, then it is possible to
find $a\in\mathbb{R}$ such that $q=\frac{1}{w}\hess w$ is divergence
free and $G$ invariant, where $w=e^{ar}$.

\end{proposition}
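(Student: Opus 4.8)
The plan is to compute $\mathrm{div}\,q$ explicitly in terms of $r$, read off which value of $a$ could possibly make it vanish, and then recognize the hypothesis on $N$ as exactly the condition that makes that value work. The preceding proposition gives $q=\frac1w\hess w=a^2\,dr^2+a\hess r$. Since $r$ is a distance function, $|\nabla r|\equiv 1$ and $\hess r(\nabla r,\cdot)=0$, and since $\hess r$ is $G$-invariant with $G$ transitive, $\Delta r=\tr\hess r$ is constant. First I would record $\mathrm{div}(dr^2)=(\Delta r)\,dr$ and $\mathrm{div}(\hess r)=\nabla(\Delta r)+\Ric(\nabla r)=\Ric(\nabla r)$, the latter being the formula for $\mathrm{div}(\nabla\nabla r)$ from the proof of Proposition~\ref{Prop:ssplit-properties}; the Bochner computation in that same proof (which uses only that $|\nabla r|$ and $\Delta r$ are constant) gives $\Ric(\nabla r,\nabla r)=-|\hess r|^2$, also constant. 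Hence
\[
\mathrm{div}\,q=a^2(\Delta r)\,dr+a\,\Ric(\nabla r,\cdot)=a\bigl(a(\Delta r)\,\nabla r+\Ric(\nabla r)\bigr),
\]
and pairing with $\nabla r$ shows that, for $a\ne0$, $\mathrm{div}\,q=0$ forces $a\,\Delta r=|\hess r|^2$ together with $\Ric(\nabla r)=-|\hess r|^2\,\nabla r$; conversely, once the latter holds throughout $M$ and $\Delta r\ne0$, the choice $a:=|\hess r|^2/\Delta r$ makes $\mathrm{div}\,q$ vanish.

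So it remains to produce the eigenvector identity $\Ric(\nabla r)=-|\hess r|^2\nabla r$, and this is where the hypothesis on $N=\{r=0\}$ comes in. Its second fundamental form, with unit normal $\nabla r$, is $\II=-\hess r|_{TN}$, and $\tr\II=-\Delta r$ is constant; hence the contracted Codazzi equation gives $\mathrm{div}^N\II=-\Ric(\nabla r,\cdot)|_{TN}$. Thus ``$\mathrm{div}^N\II=0$ at $x_0$'' says precisely that $\Ric(\nabla r,Y)=0$ for all $Y\perp\nabla r$ at $x_0$, i.e.\ $\Ric(\nabla r)(x_0)$ is proportional to $\nabla r(x_0)$, and the Bochner identity forces the constant of proportionality to be $-|\hess r|^2$. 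To propagate this off $x_0$ I would observe that $\Ric(\nabla r)=\mathrm{div}(\hess r)$ is a $G$-invariant vector field (divergence commutes with isometries and $\hess r$ is $G$-invariant), so $|\Ric(\nabla r)|$ is constant, equal to its value $|\hess r|^2$ at $x_0$; combining $|\Ric(\nabla r)|^2=|\hess r|^4$, $\langle\Ric(\nabla r),\nabla r\rangle=-|\hess r|^2$ and $|\nabla r|=1$ gives $\bigl|\Ric(\nabla r)+|\hess r|^2\nabla r\bigr|^2=0$ everywhere, so $\Ric(\nabla r)=-|\hess r|^2\nabla r$ on all of $M$.

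Finally I would choose $a$ and verify $G$-invariance of $q$. If $\hess r\ne 0$, then $|\hess r|^2$ is a nonzero constant, so $\nabla r=-|\hess r|^{-2}\,\mathrm{div}(\hess r)$ is $G$-invariant, hence so is $dr^2$, and therefore $q=a^2\,dr^2+a\hess r$ is $G$-invariant for every $a$; taking $a:=|\hess r|^2/\Delta r$ (when $\Delta r\ne0$) then makes $\mathrm{div}\,q=0$ by the first paragraph, which is the assertion. (When $\hess r\equiv0$, or when $\hess r\ne 0$ but $\Delta r=0$, one is forced to take $a=0$, and then $q=0$ is trivially divergence free and $G$-invariant.) The step I expect to be the main obstacle is the contracted-Codazzi identification of $\mathrm{div}^N\II$ with $-\Ric(\nabla r,\cdot)|_{TN}$---carefully handling the tangential/normal split, using $\hess r(\nabla r,\cdot)=0$ and the fact that only $\tr\II$ (equivalently $\Delta r$), not $\II$ itself, is known to be constant---together with the realization that the eigenvector identity, once it holds everywhere, is exactly what upgrades the $G$-invariance of $\Ric(\nabla r)$ to $G$-invariance of $\nabla r$, and hence of $q$.
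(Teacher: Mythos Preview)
Your argument is correct and arrives at the same destination as the paper, but the route is packaged differently and in one respect more carefully.

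The paper establishes the key identity $\mathrm{div}_M(\hess r)(X)=\mathrm{div}_N\II(X)$ for $X\perp\nabla r$ by a direct frame computation: it checks that $(\nabla_{\nabla r}\hess r)(\nabla r,X)=0$ and that the tangential contribution to the ambient divergence agrees with the intrinsic divergence on $N$. Having asserted $G$-invariance of $q$ at the outset, transitivity then reduces everything to the single point where the hypothesis holds. Your route instead uses $\mathrm{div}(\hess r)=\Ric(\nabla r,\cdot)$ (valid since $\Delta r$ is constant) together with the contracted Codazzi equation to identify $\mathrm{div}_N\II$ with $-\Ric(\nabla r,\cdot)|_{TN}$; this is morally the same computation, but organized around named identities rather than an ad hoc frame argument. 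Your propagation step---$G$-invariance of $\mathrm{div}(\hess r)$ forces $|\Ric(\nabla r)|$ constant, and then equality in Cauchy--Schwarz gives the global eigenvector identity $\Ric(\nabla r)=-|\hess r|^2\nabla r$---is an elegant replacement for the paper's ``check at one point'' reduction.

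Where your version is genuinely more careful: the paper simply asserts that $q=a^2\,dr^2+a\hess r$ is $G$-invariant, but this requires $dr^2$ (equivalently $\nabla r$ up to sign) to be $G$-invariant, which does not follow immediately from $G$-invariance of $\hess r$ alone. You extract $G$-invariance of $\nabla r$ from the eigenvector identity via $\nabla r=-|\hess r|^{-2}\mathrm{div}(\hess r)$ when $\hess r\neq 0$, and safely take $a=0$ otherwise. The paper is implicitly leaning on the surrounding context of one-dimensional extensions, where $\nabla r$ is already known to be $G$-invariant; your argument does not need that.
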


\begin{proof}First note that as $G$ acts transitively we only need
to check that an invariant tensor is divergence free at a specific point $p$.

When $w=e^{ar}$ we have that $q=\frac{1}{w}\hess w=a^{2}dr^{2}+a\hess r$.
Thus $q$ is also invariant under $G$. The divergence is: 
\[
\mathrm{div}q=\mathrm{div}\hess r+a\Delta rdr.
\]
By invariance, it follows that $\mathrm{div}\hess r(\nabla r)$ is
constant. Thus we can choose $a$ so that $\mathrm{div}q\left(\nabla r\right)=0$.
This shows that we obtain $\mathrm{div}q=0$ when $\mathrm{div}\hess r(X)=0$
for $X\perp\nabla r$. As $\hess r$ is the second fundamental form
for the level sets for $r$ we need to check that $\mathrm{div}\hess r(X)=\mathrm{div}_{N}\mathrm{II}\left(X\right)$.
This follows provided $\left(\nabla_{\nabla r}\hess r\right)\left(\nabla r,X\right)=0$
and that calculating this divergence intrinsically on $N$ is the
same as calculating it with the connection on $M$. We will check
this for the type changed $\left(1,1\right)$-tensor $S\left(X\right)=\nabla_{X}\nabla r$.
For the intrinsic part use an orthonormal frame $E_{i}$ for $N$
: 
\begin{eqnarray*}
\left(\nabla_{E_{i}}^{M}\mathrm{II}\right)\left(E_{i},X\right) & = & g\left(\left(\nabla_{E_{i}}^{M}S\right)\left(X\right),E_{i}\right)\\
 & = & g(\nabla_{E_{i}}(S(X))-S(\nabla_{E_{i}}X),E_{i})\\
 & = & g(\nabla_{E_{i}}^{N}(S(X))+g(\nabla_{E_{i}}(S(X)),\nabla r)\nabla r-S(\nabla_{E_{i}}^{N}X)-g(\nabla_{E_{i}}X,\nabla r)S(\nabla r),E_{i})\\
 & = & g\left(\left(\nabla_{E_{i}}^{N}S\right)\left(X\right),E_{i}\right),
\end{eqnarray*}
since $\nabla r\perp E_{i}$ and $S\left(\nabla r\right)=0$. Finally,
we also have 
\begin{eqnarray*}
\left(\nabla_{\nabla r}\hess r\right)\left(\nabla r,X\right) & = & g\left(\left(\nabla_{\nabla r}S\right)\left(\nabla r\right),X\right)\\
 & = & g\left(\nabla_{\nabla r}\left(S\left(\nabla r\right)\right),X\right)-g\left(S\left(\nabla_{\nabla r}\nabla r\right),X\right)\\
 & = & 0.
\end{eqnarray*}

\end{proof}

The general set-up for constructing a 1-dimensional extension is a
Lie group $H$ with a derivation $D$ on the Lie algebra $\mathfrak{h}$.
This gives us a Lie algebra $\mathfrak{g}=\mathfrak{h}\rtimes\mathbb{R}$
and corresponding Lie group $G$. The metric is left invariant and
preserves orthogonality in the semi-direct splitting $T_{e}G=\mathfrak{g}=\mathfrak{h}\rtimes\mathbb{R}$.
Thus it is determined by a left invariant metric on $H$. Finally,
as in \cite{He-Petersen-Wylie-Homogeneous}, the tensor $T$ that
corresponds to the second fundamental form for $H$ is proportional
to the symmetric part of the derivation.

Specifically, fix an $n$-dimensional Lie group $H$ and a left invariant
basis $X_{i}$ for its Lie algebra $\mathfrak{h}$. The structure
constants are given by 
\[
\left[X_{i},X_{j}\right]=c_{ij}^{k}X_{k}.
\]
The Lie group is said to be unimodular if $\mathrm{tr}\left(\mathrm{ad}_{X}\right)=0$
for all $X$. This is equivalent to $c_{ij}^{j}=0$ for all $i$.
We fix a derivation $D$, but in what follows the derivation property
is not used, only that it is a linear operator on the Lie algebra.

Our calculations will be with respect to a general left invariant
metric $g_{ij}=g\left(X_{i},X_{j}\right)$. The corresponding connection
is given by 
\begin{eqnarray*}
2g\left(\nabla_{X_{i}}X_{j},X_{k}\right) & = & g\left(\left[X_{i},X_{j}\right],X_{k}\right)-g\left(\left[X_{i},X_{k}\right],X_{j}\right)-g\left(\left[X_{j},X_{k}\right],X_{i}\right)\\
 & = & g_{kl}c_{ij}^{l}-g_{jl}c_{ik}^{l}-g_{il}c_{jk}^{l}.
\end{eqnarray*}

The symmetric part of $D$ is given by 
\begin{eqnarray*}
S & = & \frac{1}{2}D+\frac{1}{2}D^{*},\\
S_{j}^{i} & = & \frac{1}{2}D_{j}^{i}+\frac{1}{2}g^{il}\left(D^{t}\right)_{l}^{k}g_{kj}=\frac{1}{2}D_{j}^{i}+\frac{1}{2}g^{il}D_{k}^{l}g_{kj}.
\end{eqnarray*}
This can be type changed to two symmetric bilinear forms: $S^{ij}$
and $S_{ij}$. Note that 
\[
S_{i}^{k}g_{kj}=S_{ij}=g\left(S\left(X_{i}\right),X_{j}\right)=S_{ji}=S_{j}^{k}g_{ki}
\]
and similarly 
\[
S_{k}^{i}g^{kj}=S^{ij}=S_{k}^{j}g^{ki}.
\]

\begin{proposition}

With these assumptions and notation it follows that:

\[
2g\left(\mathrm{div}S,X\right)=\mathrm{tr}\left(D\circ\mathrm{ad}_{X}\right)+g\left(D,\mathrm{ad}_{X}\right)-2\mathrm{tr}\left(\mathrm{ad}_{S\left(X\right)}\right).
\]

\end{proposition}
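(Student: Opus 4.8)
The plan is to reduce everything to linear algebra on $\mathfrak{h}$. Since $g$, the Levi--Civita connection, $D$, $S$ and $\mathrm{ad}_X$ are all left-invariant, $\mathrm{div}\,S$ is a left-invariant $1$-form, so it suffices to compute $(\mathrm{div}\,S)(X)$ at the identity with $X$ taken from the left-invariant frame $X_i$. I would start from the definition $(\mathrm{div}\,S)(X)=\sum_{i,j}g^{ij}\,g\big((\nabla_{X_i}S)(X_j),X\big)$ and expand $(\nabla_{X_i}S)(X_j)=\sum_k S_j^k\,\nabla_{X_i}X_k-S(\nabla_{X_i}X_j)$, which is valid because $S$ has constant components in this frame. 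This breaks $(\mathrm{div}\,S)(X)$ into a ``zeroth order'' piece $-\sum_{i,j}g^{ij}\,g(\nabla_{X_i}X_j,S(X))$ and a ``first order'' piece $\sum_{i,j,k}g^{ij}S_j^k\,g(\nabla_{X_i}X_k,X)$.

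For the zeroth order piece I would first identify the mean curvature vector $H=\sum_{i,j}g^{ij}\nabla_{X_i}X_j$. Plugging the connection formula $2g(\nabla_{X_i}X_j,X_k)=g_{kl}c^l_{ij}-g_{jl}c^l_{ik}-g_{il}c^l_{jk}$ into the sum, the first term dies because $c^l_{ij}$ is skew in $i,j$ while $g^{ij}$ is symmetric, and the remaining two terms each contract down to the same thing, giving $g(H,X_m)=-\sum_i c^i_{im}=\mathrm{tr}(\mathrm{ad}_{X_m})$ after using skew-symmetry of the structure constants. Hence the zeroth order piece equals $-g(H,S(X))=-\mathrm{tr}(\mathrm{ad}_{S(X)})$, which produces the last term of the claimed identity (after the overall factor of $2$).

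For the first order piece I would raise an index, $\sum_j g^{ij}S_j^k=S^{ik}$, which is symmetric in $i,k$ since $S$ is self-adjoint, and insert the connection formula again. Exactly as before, the $g([X_i,X_k],X)$-term vanishes against the symmetric $S^{ik}$, and the two remaining terms coincide after relabelling $i\leftrightarrow k$, using the symmetry of $S^{ik}$. Rewriting $[X_i,X]=-\mathrm{ad}_X(X_i)$ and lowering/raising indices with $g$, what survives collapses to $\mathrm{tr}(S\circ\mathrm{ad}_X)$. Combining the two pieces gives $(\mathrm{div}\,S)(X)=\mathrm{tr}(S\circ\mathrm{ad}_X)-\mathrm{tr}(\mathrm{ad}_{S(X)})$; then writing $S=\frac{1}{2}(D+D^*)$ and using $\mathrm{tr}(D^*\circ\mathrm{ad}_X)=g(D,\mathrm{ad}_X)$ (the Frobenius pairing of the two endomorphisms) and multiplying by $2$ yields the stated formula.

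The substantive content of the argument is light — two applications of Koszul's formula plus the computation of $H$ — so the only real obstacle is bookkeeping: keeping straight which contractions are annihilated by the interplay of the skew symbol $c^l_{ij}$ with the symmetric objects $g^{ij}$ and $S^{ik}$, and applying the paper's raising/lowering conventions for $S_j^i$, $S_{ij}$, $S^{ij}$ consistently so that the final expression is genuinely a trace of a composition of operators (and the pairing $g(D,\mathrm{ad}_X)$ is the intended inner product of endomorphisms) rather than some differently-contracted quantity.
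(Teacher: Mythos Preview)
Your proposal is correct and follows essentially the same route as the paper: expand $(\nabla_{X_i}S)(X_j)$ using constancy of the components, apply Koszul's formula, kill terms via the symmetry/skew-symmetry pairing of $g^{ij}$ or $S^{ik}$ against $c^l_{ij}$, and identify the surviving contractions as $\mathrm{tr}(S\circ\mathrm{ad}_X)-\mathrm{tr}(\mathrm{ad}_{S(X)})$ before splitting $S=\tfrac{1}{2}(D+D^*)$. The only cosmetic difference is that you name the vector $H=g^{ij}\nabla_{X_i}X_j$ and compute $g(H,\cdot)=\mathrm{tr}(\mathrm{ad}_{(\cdot)})$ separately, whereas the paper carries out the entire computation in one index calculation; the underlying cancellations and identifications are identical.
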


\begin{proof}

The goal is to calculate $\mathrm{div}S=g^{ij}\left(\nabla_{X_{i}}S\right)\left(X_{j}\right)$.
Since it is easier to calculate the corresponding 1-form we calculate
instead: 
\begin{eqnarray*}
2g^{ij}g\left(\left(\nabla_{X_{i}}S\right)\left(X_{j}\right),X_{k}\right) & = & 2g^{ij}g\left(\nabla_{X_{i}}S\left(X_{j}\right),X_{k}\right)-2g^{ij}g\left(\nabla_{X_{i}}X_{j},S\left(X_{k}\right)\right)\\
 & = & 2g^{ij}S_{j}^{\alpha}g\left(\nabla_{X_{i}}X_{\alpha},X_{k}\right)-2g^{ij}S_{k}^{\alpha}g\left(\nabla_{X_{i}}X_{j},X_{\alpha}\right)\\
 & = & g^{ij}S_{j}^{\alpha}\left(g_{k\beta}c_{i\alpha}^{\beta}-g_{i\beta}c_{\alpha k}^{\beta}-g_{\alpha\beta}c_{ik}^{\beta}\right)-g^{ij}S_{k}^{\alpha}\left(g_{\alpha\beta}c_{ij}^{\beta}-g_{i\beta}c_{j\alpha}^{\beta}-g_{j\beta}c_{i\alpha}^{\beta}\right)\\
 & = & S^{i\alpha}g_{k\beta}c_{i\alpha}^{\beta}-S_{j}^{\alpha}c_{\alpha k}^{j}-S^{i\alpha}g_{\alpha\beta}c_{ik}^{\beta}-g^{ij}S_{k}^{\alpha}g_{\alpha\beta}c_{ij}^{\beta}+S_{k}^{\alpha}c_{j\alpha}^{j}+S_{k}^{\alpha}c_{i\alpha}^{i}\\
 & = & S^{i\alpha}c_{i\alpha}^{\beta}g_{k\beta}-S_{j}^{\alpha}c_{\alpha k}^{j}-S_{\beta}^{i}c_{ik}^{\beta}-g^{ij}c_{ij}^{\beta}S_{k}^{\alpha}g_{\alpha\beta}+S_{k}^{\alpha}c_{j\alpha}^{j}+S_{k}^{\alpha}c_{i\alpha}^{i}\\
 & = & 0-2S_{j}^{\alpha}c_{\alpha k}^{j}-0+2S_{k}^{\alpha}c_{i\alpha}^{i}\\
 & = & 2\mathrm{tr}\left(S\circ\mathrm{ad}_{X_{k}}\right)-2\mathrm{tr}\left(\mathrm{ad}_{S\left(X_{k}\right)}\right)\\
 & = & \mathrm{tr}\left(D\circ\mathrm{ad}_{X_{k}}\right)+\mathrm{tr}\left(D^{*}\circ\mathrm{ad}_{X_{k}}\right)-2\mathrm{tr}\left(\mathrm{ad}_{S\left(X_{k}\right)}\right)\\
 & = & \mathrm{tr}\left(D\circ\mathrm{ad}_{X_{k}}\right)+g\left(D,\mathrm{ad}_{X_{k}}\right)-2\mathrm{tr}\left(\mathrm{ad}_{S\left(X_{k}\right)}\right).
\end{eqnarray*}
In other words: 
\[
2g\left(\mathrm{div}S,X\right)=\mathrm{tr}\left(D\circ\mathrm{ad}_{X}\right)+g\left(D,\mathrm{ad}_{X}\right)-2\mathrm{tr}\left(\mathrm{ad}_{S\left(X\right)}\right).
\]

\end{proof}

With a view toward concrete examples note that: $\mathrm{tr}\left(D\circ\mathrm{ad}_{X}\right)$
does not depend on the metric; while $\mathrm{tr}\left(\mathrm{ad}_{S\left(X\right)}\right)=0$
when the the Lie group is unimodular. Keep in mind that $g\left(D,\mathrm{ad}_{X}\right)$
is not linear in $g_{ij}$, in the given frame it looks like 
\[
g^{ij}g\left(D\left(X_{i}\right),\mathrm{ad}_{X}\left(X_{j}\right)\right)=g^{ij}g_{\alpha\beta}D_{i}^{\alpha}\left(\mathrm{ad}_{X}\right)_{j}^{\beta}.
\]

\begin{example}

The simplest examples are on the 3-dimensional Heisenberg group. This
algebra has the single relation: $\left[X,Y\right]=Z$. In this basis
the adjoint actions have the matrices 
\[
\mathrm{ad}_{X}=\left[\begin{array}{ccc}
0 & 0 & 0\\
0 & 0 & 0\\
0 & 1 & 0
\end{array}\right],\,\mathrm{ad}_{Y}=\left[\begin{array}{ccc}
0 & 0 & 0\\
0 & 0 & 0\\
-1 & 0 & 0
\end{array}\right],\,\mathrm{ad}_{Z}=\left[\begin{array}{ccc}
0 & 0 & 0\\
0 & 0 & 0\\
0 & 0 & 0
\end{array}\right]
\]
We use any derivation of the form: 
\[
D=\left[\begin{array}{ccc}
\lambda_{1} & 0 & 0\\
0 & \lambda_{2} & 0\\
0 & 0 & \lambda_{3}
\end{array}\right].
\]
The composition of this derivation with any of the adjoint actions
clearly vanishes. So for any metric we get the three equations: 
\begin{eqnarray*}
g\left(D,\mathrm{ad}_{X}\right) & = & \lambda_{1}g_{31}g^{21}+\lambda_{2}g_{32}g^{22}+\lambda_{3}g_{33}g^{23}=0,\\
g\left(D,\mathrm{ad}_{Y}\right) & = & -\lambda_{1}g_{31}g^{11}-\lambda_{2}g_{32}g^{12}-\lambda_{3}g_{33}g^{31}=0,\\
g\left(D,\mathrm{ad}_{Z}\right) & = & 0.
\end{eqnarray*}
These equations are clearly satisfied for any metric of the form 
\[
\left[\begin{array}{ccc}
g_{11} & g_{12} & 0\\
g_{12} & g_{22} & 0\\
0 & 0 & g_{33}
\end{array}\right]
\]
as the inverse satisfies $g^{23}=g^{13}=0$.

\end{example}

\begin{example}

Consider the three dimensional simple (and unimodular) Lie algebra
with relations: 
\[
\left[X_{1},X_{2}\right]=X_{3}, \left[X_{2},X_{3}\right]=X_{1}, \left[X_{3},X_{1}\right]=X_{2}.
\]
In this basis we have 
\[
\mathrm{ad}_{X_{1}}=\left[\begin{array}{ccc}
0 & 0 & 0\\
0 & 0 & -1\\
0 & 1 & 0
\end{array}\right],\,\mathrm{ad}_{X_{2}}=\left[\begin{array}{ccc}
0 & 0 & 1\\
0 & 0 & 0\\
-1 & 0 & 0
\end{array}\right],\,\mathrm{ad}_{X_{3}}=\left[\begin{array}{ccc}
0 & -1 & 0\\
1 & 0 & 0\\
0 & 0 & 0
\end{array}\right]
\]
We will use $D=\mathrm{ad}_{X_{1}}$. This derivation is skew-symmetric
with respect to the standard biinvariant metric that makes the basis
elements have equal length and be orthogonal. To calculate $\mathrm{tr}\left(D\circ\mathrm{ad}_{X}\right)$
we note that: 
\begin{eqnarray*}
\mathrm{tr}\left(D\circ\mathrm{ad}_{X_{1}}\right) & = & -2,\\
\mathrm{tr}\left(D\circ\mathrm{ad}_{X_{2}}\right) & = & 0,\\
\mathrm{tr}\left(D\circ\mathrm{ad}_{X_{3}}\right) & = & 0.
\end{eqnarray*}
Next we find $g\left(D,\mathrm{ad}_{X}\right)$ for a general metric:
\begin{eqnarray*}
g\left(D,\mathrm{ad}_{X_{1}}\right) & = & g_{33}g^{22}+g_{22}g^{33}-2g_{23}g^{23},\\
g\left(D,\mathrm{ad}_{X_{2}}\right) & = & -g_{33}g^{21}+g_{31}g^{23}-g_{21}g^{33}+g_{23}g^{31},\\
g\left(D,\mathrm{ad}_{X_{3}}\right) & = & g_{32}g^{21}-g_{13}g^{22}-g_{22}g^{31}+g_{21}g^{32}.
\end{eqnarray*}
We then restrict attention a metric of the form 
\[
\left[\begin{array}{ccc}
g_{11} & 0 & 0\\
0 & g_{22} & g_{23}\\
0 & g_{23} & g_{33}
\end{array}\right]
\]
The inverse is 
\[
\left[\begin{array}{ccc}
\frac{1}{g_{11}} & 0 & 0\\
0 & \frac{g_{33}}{g_{22}g_{33}-g_{23}^{2}} & -\frac{g_{23}}{g_{22}g_{33}-g_{23}^{2}}\\
0 & -\frac{g_{23}}{g_{22}g_{33}-g_{23}^{2}} & \frac{g_{22}}{g_{22}g_{33}-g_{23}^{2}}
\end{array}\right]
\]
and 
\begin{eqnarray*}
g\left(D,\mathrm{ad}_{X_{1}}\right) & = & 2\frac{g_{22}g_{33}}{g_{22}g_{33}-g_{23}^{2}}-2\frac{g_{23}^{2}}{g_{22}g_{33}-g_{23}^{2}}=2,\\
g\left(D,\mathrm{ad}_{X_{2}}\right) & = & -g_{33}\cdot0+0\cdot g^{23}-0\cdot g^{33}+g_{23}\cdot0=0,\\
g\left(D,\mathrm{ad}_{X_{3}}\right) & = & g_{32}\cdot0-0\cdot g^{22}-g_{22}\cdot0+0\cdot g^{32}=0.
\end{eqnarray*}
This results in a 4-dimensional family of metrics with the property
that $\mathrm{div}S=0$. This family includes the Berger spheres.

\end{example}


\section{ Traceless $W$}

Now we consider the vector space of functions $\mathring{W}(M,g,q)$ satisfying (\ref{eqn 1.2a}).  We have the
following definition.

\begin{definition} Let $(M,g)$ be a Riemannian manifold and $q$
a quadratic form. The space of functions $\mathring{W}(M,g,q)$ is
essential if $\mathring{W}(M,g,q)\neq W(M,g,q')$ for all quadratic
forms $q'$. \end{definition}

For simply connected spaces of constant curvature,  $\mathring{W}(M,g,0)$  is essential
since it is $\left(n+2\right)$-dimensional and $W(M,g,q)$ has maximal dimension $n+1$ \cite[Proposition 1.1]{He-Petersen-Wylie}. We have the following
result for essential/inessential $\mathring{W}$. The proof is exactly
analogous to the proposition in the $F$ case, so we omit it.

\begin{proposition} Let $(M,g)$ be a Riemannian manifold and $q$
a quadratic form. $\mathring{W}(M,g,q)$ is essential if and only
if $\mathring{W}(M,g,q)\neq W(M,g,q-\phi g)$ for all $\phi\in C^{\infty}(M)$.
Moreover, if $\mathring{W}(M,g,q)=W(M,g,q')$ is inessential and $q$
is invariant under $G\subset\mathrm{Isom}(M,g)$ then $q'$ is also
invariant under $G$. \end{proposition}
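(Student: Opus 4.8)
The plan is to follow the proof of Proposition~\ref{Prop:TF-simple} almost verbatim, with $\Hess w$ in place of $\Hess f$ and the multiplicative factor $w$ carried along; the only point that goes beyond formal manipulation is cancelling that factor, which is licensed by the full-measure property of zero sets of nontrivial elements of $W$.

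First I would record the elementary inclusion $W(M,g,q-\phi g)\subseteq\mathring{W}(M,g,q)$, valid for every $\phi\in C^{\infty}(M)$: if $w\in W(M,g,q-\phi g)$, i.e.\ $\Hess w=w(q-\phi g)$, then since $q-\phi g$ and $q$ have the same trace-free part, taking trace-free parts gives $\mathring{\Hess w}=w\,\mathring q$, so $w\in\mathring{W}(M,g,q)$. The ``only if'' direction of the equivalence is then immediate: if $\mathring{W}(M,g,q)$ is essential, then by definition $\mathring{W}(M,g,q)\neq W(M,g,q')$ for every symmetric two-tensor $q'$, in particular for every $q'$ of the form $q-\phi g$.

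For the ``if'' direction I would argue contrapositively. Assume $\mathring{W}(M,g,q)$ is inessential, say $\mathring{W}(M,g,q)=W(M,g,q')$, and produce $\phi$ with $\mathring{W}(M,g,q)=W(M,g,q-\phi g)$. If $\mathring{W}(M,g,q)=\{0\}$, then by the inclusion above $W(M,g,q)\subseteq\{0\}$, so $\mathring{W}(M,g,q)=W(M,g,q)$ and $\phi=0$ works. Otherwise fix a nontrivial $w_0\in\mathring{W}(M,g,q)=W(M,g,q')$; then $\Hess w_0=w_0q'$ gives $\mathring{\Hess w_0}=w_0\left(q'-\frac{\tr q'}{n}g\right)$, while $w_0\in\mathring{W}(M,g,q)$ gives $\mathring{\Hess w_0}=w_0\left(q-\frac{\tr q}{n}g\right)$. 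Comparing, on the open set $\{w_0\neq0\}$ one has $q'=q+\frac{\tr q'-\tr q}{n}g$; by \cite[Proposition 1.1]{He-Petersen-Wylie} this set has full measure, hence is dense, so by continuity of $q$ and $q'$ the identity holds on all of $M$. Thus with $\phi:=\frac{\tr q-\tr q'}{n}\in C^{\infty}(M)$ we get $q'=q-\phi g$, and therefore $\mathring{W}(M,g,q)=W(M,g,q')=W(M,g,q-\phi g)$.

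Finally, for the ``moreover'' clause I would argue exactly as in the $F$ case: if $q$ is $G$-invariant then so is its trace-free part $\mathring q$, hence the equation $\mathring{\Hess w}=w\,\mathring q$ is preserved by every $\gamma\in G$ and $\mathring{W}(M,g,q)$ is a $G$-invariant space of functions; since $\mathring{W}(M,g,q)=W(M,g,q')$ is nontrivial, Proposition~\ref{Prop:q-vs-W-inv} yields that $q'$ is $G$-invariant. The main (indeed essentially only) obstacle is the cancellation step passing the identity $q'=q+\frac{\tr q'-\tr q}{n}g$ from $\{w_0\neq0\}$ to all of $M$, and this is dispatched by the cited full-measure statement together with the continuity of $q$ and $q'$.
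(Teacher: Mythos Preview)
Your proof is correct and matches the paper's intended argument: the paper omits the proof entirely, saying only that it is ``exactly analogous to the proposition in the $F$ case,'' i.e.\ Proposition~\ref{Prop:TF-simple}. You have correctly carried out that analogy, including the one genuinely new wrinkle in the $W$ setting---the need to cancel the factor $w_0$ and pass from $\{w_0\neq 0\}$ to all of $M$ via the full-measure property of \cite[Proposition 1.1]{He-Petersen-Wylie} and continuity---which the paper's terse pointer leaves implicit.
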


This gives the following characterization of essential $\mathring{W}$.

\begin{lemma} \label{Lemma:oWEssential} Let $(M,g)$ the a $G$-homogeneous
manifold and $q$ be a $G$-invariant tensor. If $\mathring{W}(M,g,q)$
is essential, then $(M,g)$ is locally conformally flat. \end{lemma}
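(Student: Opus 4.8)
The plan is to mimic the argument for Theorem~\ref{Thm:EssentialF}, replacing $\mathring V$ by $\mathring W$. The key point is the following analogue of the observation used in the $F$ case: if $\mathring W(M,g,q)$ is essential, then it must contain a non-constant function $w$ whose gradient is a conformal field that is not Killing. Indeed, by the characterization of essential $\mathring W$ (the proposition immediately preceding the lemma, together with its proof which we are told is analogous to Proposition~\ref{Prop:TF-simple}), essentiality is equivalent to $\mathring W(M,g,q)\neq W(M,g,q-\phi g)$ for all $\phi$, and — running the same argument as in Proposition~\ref{Prop:TF-simple} — this is equivalent to $\Delta$ being non-constant on $\mathring W$, equivalently to the existence of $w\in\mathring W$ with $\mathring{\Hess w}=w\mathring q$ but with $\Delta w$ not proportional to $w$ in the way that would let us absorb the trace into $q$. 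In any case essentiality forces the existence of a non-constant $w\in\mathring W$ such that $L_{\nabla w}g$ has non-constant (or at least non-zero, after subtracting) trace part — the precise statement being that $\nabla w$ is a conformal field: $\mathring{L_{\nabla w}g}=2\mathring{\Hess w}=2w\mathring q$ need not vanish, so one has to be a bit more careful here than in the $F$ case.

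Here is where I expect the main obstacle. In the $F$ case, $w\in\mathring V$ means $\mathring{\Hess w}=0$, so $\nabla w$ is literally a conformal field and Proposition~\ref{Prop:CField} applies directly. For $\mathring W$ we only have $\mathring{\Hess w}=w\mathring q$, which is not zero in general, so $\nabla w$ is \emph{not} a priori a conformal field. The fix should be: since $\mathring W$ is essential, by the stated proposition $\mathring W(M,g,q)\neq W(M,g,q-\phi g)$ for all $\phi$; equivalently, on the vector space $\mathring W$ the trace functional is not determined by $q$, which (as in Proposition~\ref{Prop:TF-simple}(3)$\Leftrightarrow$(4)) forces $\mathring W$ to strictly contain the corresponding $W$-space, and a difference of two elements $w_1,w_2\in\mathring W$ with the same "would-be $q'$" lies in $\mathring V\setminus V$. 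More directly: if $\mathring W(q)$ is essential then in particular $\mathring V=\mathring W(M,g,0)\subset\mathring W(M,g,q)$ fails to equal $V$ — this needs a short argument, but the idea is that essentiality of $\mathring W$ propagates to essentiality of $\mathring V$ exactly as in the $F$ case (since $W(q')$ always contains the constants, and the obstruction to inessentiality is measured by the trace map whose relevant failure lives in $\mathring V$). Once we know $\mathring V\neq V$, pick $v\in\mathring V\setminus V$: then $\nabla v$ is a genuine conformal field that is not Killing.

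With a non-Killing conformal field in hand, the rest is immediate: by Proposition~\ref{Prop:CField}, since $(M,g)$ is homogeneous and admits a conformal field that is not Killing, $(M,g)$ must be locally conformally flat. (The alternative conclusion of that proposition — that the field is Killing — is excluded by construction.) This completes the proof.

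Let me therefore write the proof as follows: first reduce essentiality of $\mathring W(M,g,q)$ to essentiality of $\mathring V$, i.e.\ to $\mathring V\neq V$; the cleanest route is to invoke the preceding proposition to replace $q'$ by $q-\phi g$, note $\phi$ is then forced constant by homogeneity and Proposition~\ref{Prop:q-vs-W-inv}, and observe that $W(M,g,q-\phi g)$ versus $\mathring W(M,g,q)$ differ precisely by whether $\mathring V$ sits inside $V$ — unwinding, a strict containment $\mathring W(q)\supsetneq W(q')$ yields, via differences of solutions, a function in $\mathring V\setminus V$. Then take $v\in\mathring V\setminus V$, note $\nabla v$ is a conformal, non-Killing field, and conclude local conformal flatness from Proposition~\ref{Prop:CField}. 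I would flag the reduction step (essential $\mathring W\Rightarrow\mathring V\neq V$) as the part requiring genuine care; everything downstream is a direct citation. If that reduction turns out to be subtle, an alternative is to argue directly that essential $\mathring W$ produces \emph{some} non-Killing conformal field — for instance by noting that for $w\in\mathring W(q)$ the field $\nabla w$ satisfies $\mathring{L_{\nabla w}g}=2w\mathring q$, and if $\mathring q\neq 0$ one can instead work on the universal cover with Takagi's theorem after first establishing the weaker warped-product structure from Theorem~\ref{Thm:WPR}; but the conformal-field route is cleaner and matches the $F$ case, so that is the one I would present.
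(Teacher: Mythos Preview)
Your reduction ``essential $\mathring{W}(q)\Rightarrow \mathring{V}\neq V$'' is the gap, and it is not just unjustified but actually false. Unlike the $\mathring{F}$ case, where the difference of two solutions lands in $\mathring{V}$, here $\mathring{W}(q)$ is a vector space and the difference of two solutions satisfies $\mathring{\Hess}(w_1-w_2)=(w_1-w_2)\mathring{q}$, which lands back in $\mathring{W}(q)$, not in $\mathring{V}$. So you cannot produce an element of $\mathring{V}\setminus V$ by subtracting solutions. Concretely, the products $S^k(\kappa)\times H^{n-k}(-\kappa)$ are homogeneous, locally conformally flat, and (as shown later in the paper via Proposition~\ref{Prop:oWProductCalc}) carry an essential $\mathring{W}$; yet by Proposition~\ref{Prop:Products} any nontrivial product has $\mathring{V}=V$. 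Thus essential $\mathring{W}$ does not force $\mathring{V}\neq V$, and your gradient field $\nabla v$ with $v\in\mathring V\setminus V$ need not exist.

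The paper's key idea, which you are missing, is to build the conformal field from \emph{two} solutions rather than one. First, $\dim\mathring{W}=1$ is immediately inessential (take $q'=q-\tfrac{\Delta w}{nw}g$ for a generator $w$, noting $w$ has no zeros by homogeneity). With $\dim\mathring{W}\geq 2$, pick linearly independent $w_1,w_2\in\mathring{W}$ and set $V=w_1\nabla w_2-w_2\nabla w_1$. Then
\[
L_Vg=2\bigl(w_1\Hess w_2-w_2\Hess w_1\bigr)=2\Bigl(w_1w_2\mathring{q}-w_2w_1\mathring{q}+\tfrac{w_1\Delta w_2-w_2\Delta w_1}{n}g\Bigr)=\tfrac{2(w_1\Delta w_2-w_2\Delta w_1)}{n}g,
\]
so the $\mathring{q}$ terms cancel and $V$ is genuinely conformal. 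If every such $V$ were Killing one gets $w_1\Hess w_2=w_2\Hess w_1$, from which a well-defined $q'_p=\Hess_p w/w(p)$ (for $w\notin\mathring{W}_p$) makes $\mathring{W}(q)=W(q')$, contradicting essentiality. Hence some $V$ is non-Killing, and Proposition~\ref{Prop:CField} finishes. Your alternative suggestion of using $\nabla w$ directly also fails, since $\mathring{L_{\nabla w}g}=2w\mathring{q}$ is generally nonzero.
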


\begin{proof} If $\mathrm{dim}(\mathring{W})=1$, take $w\in\mathring{W}$,
then clearly $\mathring{W}=W(M,g,q-\frac{\Delta w}{n}g)$, so $\mathring{W}$
is inessential. Therefore, $\mathrm{dim}(\mathring{W})>1$. Let $w_{1},w_{2}$
be linearly independent functions in $\mathring{W}$ and define $V=w_{1}\nabla w_{2}-w_{2}\nabla w_{1}$
and note that 
\begin{align*}
L_{V}g & =w_{1}\mathrm{Hess}w_{2}-w_{2}\mathrm{Hess}w_{1}=\frac{w_{1}\Delta w_{2}-w_{2}\Delta w_{1}}{n}g.
\end{align*}
So $V$ is a conformal field. If $V$ is Killing for all $w_{1},w_{2}\in\mathring{W}$,
then 
\begin{align}
w_{1}\mathrm{Hess}w_{2}=w_{2}\mathrm{Hess}w_{1}.\label{eqn:w12}
\end{align}
Let $p\in M$ and define $\mathring{W}_{p}=\{w\in\mathring{W}\mid w(p)=0\}$.
If $\mathring{W}_{p}=\mathring{W}$ at some point $p$, then all functions
in $\mathring{W}$ vanish at $p$. Since $\mathring{W}$ is invariant
under the transitive group of isometries, this would imply that $\mathring{W}$
is trivial. Therefore, $\mathring{W}_{p}\neq\mathring{W}$ $\forall p$.
In fact, if we define $q'$ by the formula 
\begin{align}
q'_{p}=\frac{\mathrm{Hess}_{p}w}{w(p)}\quad\text{where}\quad w\in W\setminus\mathring{W}_{p},\label{eqn:q'}
\end{align}
then $q'$ is well defined on all of $M$ by (\ref{eqn:w12}). We
then have that $\mathring{W}(q)=W(q')$ which contradicts that $W$
is essential.

Therefore, if $\mathring{W}$ is essential, then $(M,g)$ must support
a non-Killing conformal field and by Proposition \ref{Prop:CField}
the space is locally conformally flat. \end{proof}


In the $\mathring{F}$ case, we showed that any product space was
inessential, the following proposition shows that this is not the
case for $\mathring{W}$.

\begin{proposition} \label{Prop:oWProductCalc} Suppose that $(M^{n},g)=(M_1^{k}\times M_2^{n-k},g_{1}+g_{2})$
is a product manifold and $q=c_{1}g_{1}+c_{2}g_{2}$ where $c_{i}\in\mathbb{R}$
and $c_{1}\neq c_{2}$, then 
\[
\mathring{W}(M,g,q)=W(M_1,g_{1},\lambda g_{1})\oplus W(M_2,g_{2},-\lambda g_{2}),
\]
where $\lambda=c_{1}-c_{2}$. \end{proposition}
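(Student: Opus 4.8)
The plan is to compute $\mathring q$ explicitly, then verify the two inclusions directly, using the product-splitting lemma \cite[Lemma 2.1]{Petersen-Wylie-Symm} to separate variables in the harder direction. Since $\tr_g q = c_1 k + c_2(n-k)$, we have
\[
\mathring q = q - \tfrac{\tr_g q}{n}\, g = \tfrac{(n-k)\lambda}{n}\, g_1 - \tfrac{k\lambda}{n}\, g_2, \qquad \lambda = c_1 - c_2 ,
\]
which is again block diagonal with no mixed part; note $\mathring q \neq 0$ since $\lambda \neq 0$ and both factors are nontrivial ($1 \le k \le n-1$).

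For the inclusion $\supseteq$, take $w_1 \in W(M_1,g_1,\lambda g_1)$ and $w_2 \in W(M_2,g_2,-\lambda g_2)$ and set $w = w_1 + w_2$. Then $\hess_g w = \lambda w_1 g_1 - \lambda w_2 g_2$ and $\Delta_g w = \lambda\big(k w_1 - (n-k) w_2\big)$, so a one-line computation of $\mathring{\hess_g w} = \hess_g w - \tfrac{\Delta_g w}{n}\, g$ yields $\mathring{\hess_g w} = \tfrac{(n-k)\lambda}{n}(w_1+w_2)\, g_1 - \tfrac{k\lambda}{n}(w_1+w_2)\, g_2 = w\,\mathring q$. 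The sum is direct because a function lying in both summands is constant in each variable, hence globally constant, and a constant lies in $W(M_i,g_i,\pm\lambda g_i)$ only if it vanishes, since $\lambda \neq 0$.

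For the inclusion $\subseteq$, let $w \in \mathring W(M,g,q)$. Since both $g$ and $\mathring q$ have vanishing mixed components, $\hess_g w(X,U) = \mathring{\hess_g w}(X,U) = w\,\mathring q(X,U) = 0$ for $X \in TM_1$, $U \in TM_2$, so \cite[Lemma 2.1]{Petersen-Wylie-Symm} gives $w(x_1,x_2) = w_1(x_1) + w_2(x_2)$ and $\hess_g w = \hess_{g_1} w_1 + \hess_{g_2} w_2$. Restricting $\mathring{\hess_g w} = w\,\mathring q$ to $TM_1$ and to $TM_2$ yields $\hess_{g_1} w_1 = A\, g_1$ and $\hess_{g_2} w_2 = B\, g_2$ for scalars $A,B$, which — as the left sides depend only on $x_1$, resp.\ $x_2$, and the $g_i$ are nondegenerate — are functions of $x_1$, resp.\ $x_2$, alone. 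Tracing gives $A = \Delta_{g_1} w_1 / k$ and $B = \Delta_{g_2} w_2 /(n-k)$, and subtracting the two restricted scalar equations and substituting the formula for $\mathring q$ gives $A - B = \lambda(w_1 + w_2)$, that is,
\[
\frac{\Delta_{g_1} w_1}{k} - \lambda w_1 = \frac{\Delta_{g_2} w_2}{n-k} + \lambda w_2 .
\]
The left side is a function of $x_1$ only and the right of $x_2$ only, so both equal a constant $\mu$, whence $\hess_{g_1} w_1 = (\lambda w_1 + \mu)\, g_1$ and $\hess_{g_2} w_2 = (\mu - \lambda w_2)\, g_2$. Since $\lambda \neq 0$, the shifted functions $\hat w_1 = w_1 + \mu/\lambda$ and $\hat w_2 = w_2 - \mu/\lambda$ satisfy $\hess_{g_1}\hat w_1 = \lambda \hat w_1 g_1$ and $\hess_{g_2}\hat w_2 = -\lambda \hat w_2 g_2$, and $w = \hat w_1 + \hat w_2 \in W(M_1,g_1,\lambda g_1)\oplus W(M_2,g_2,-\lambda g_2)$.

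The trace-free Hessian computation in the $\supseteq$ step is entirely routine; the point needing care is the bookkeeping of the trace terms in the $\subseteq$ step, where one must keep the $\Delta_{g_i} w_i$ contributions separated from the $w_i$ contributions so that the separation-of-variables argument legitimately produces a single common constant $\mu$. The hypothesis $c_1 \neq c_2$ is used essentially twice: to translate the $w_i$ by a constant back into the target solution spaces, and to force the sum to be direct.
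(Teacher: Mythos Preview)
Your proof is correct and follows essentially the same route as the paper's: split $w=w_1+w_2$ via \cite[Lemma 2.1]{Petersen-Wylie-Symm}, observe that each $\hess_{g_i}w_i$ is conformal to $g_i$, separate variables to obtain a common constant, and shift by that constant to land in the target spaces. Your write-up is in fact slightly more complete than the paper's, which only argues the inclusion $\subseteq$ and leaves the reverse inclusion and the directness of the sum implicit; your streamlined bookkeeping via $A,B$ and $\mu$ is equivalent to the paper's constant $\alpha$ up to the rescaling $\mu=\tfrac{n}{n-k}\alpha$.
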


\begin{proof} If $w\in\mathring{W}$, then $\mathrm{Hess}w(X,U)=0$
for $X\in TM_1,\,U\in TM_2$, so by \cite[Proposition 2.1]{Petersen-Wylie-Symm},  $w=w_{1}+w_{2}$ where $w_i$ is a function on $M_i$. This
shows that for $X,Y\in TM_1$ 
\[
\mathring{\mathrm{Hess}w}(X,Y)=\mathrm{Hess}_{g_1}w_{1}(X,Y)-\frac{\Delta_{g_1}w_{1}+\Delta_{g_2}w_{2}}{n}g_{1}(X,Y).
\]
However, as 
\[
\mathring{q}=c_{1}g_{1}+c_{2}g_{2}-\frac{kc_{1}+\left(n-k\right)c_{2}}{n}\left(g_{1}+g_{2}\right)=\frac{n-k}{n}\lambda g_{1}-\frac{k}{n}\lambda g_{2}
\]
we also have 
\[
\mathring{\mathrm{Hess}w}(X,Y)=w\mathring{q}(X,Y)=\frac{n-k}{n}\lambda wg_{1}\left(X,Y\right).
\]
Setting these equations for $\mathring{\mathrm{Hess}w}$ equal  shows that
$\mathrm{Hess}_{g_1}w_{1}$ is conformal to $g_{1}$. Thus 
\[
\left(\frac{n-k}{n}\lambda w+\frac{\Delta_{g_1}w_{1}+\Delta_{g_2}w_{2}}{n}\right)g_{1}=\mathrm{Hess}_{g_1}w_{1}=\frac{\Delta_{g_1}w_{1}}{k}g_{1}.
\]
This implies that there is a constant $\alpha$ such that

\[
\frac{n-k}{nk}\Delta_{g_1}w_{1}-\frac{n-k}{n}\lambda w_{1}=\frac{n-k}{n}\lambda w_{2}+\frac{1}{n}\Delta_{g_2}w_{2}=\alpha.
\]
$\alpha$ is constant as the left side of the equation  depends $M_1$ only  and the right depends
on $M_2$ only. 

By assumption $\lambda\neq0$, so 
\begin{align*}
\mathrm{Hess}_{g_1}w_{1} & =\frac{\Delta_{g_1}w_{1}}{k}g_{1}=\left(\lambda w_{1}+\alpha\frac{n}{n-k}\right)g_{1}=\lambda\left(w_{1}+\frac{\alpha}{\lambda}\frac{n}{n-k}\right)g_{1},\\
\mathrm{Hess}_{g_2}w_{2} & =\frac{\Delta_{g_2}w_{2}}{n-k}g_{2}=\left(-\lambda w_{2}+\alpha\frac{n}{n-k}\right)g_{2}=-\lambda\left(w_{2}-\frac{\alpha}{\lambda}\frac{n}{n-k}\right)g_{2}.
\end{align*}

Taking 
\begin{align*}
w'_1 = w_{1}+\frac{\alpha}{\lambda}\frac{n}{n-k} \qquad 
w'_2 = w_{2}-\frac{\alpha}{\lambda}\frac{n}{n-k} 
\end{align*}
we then have $w=w_{1}+w_{2}=w'_{1}+w'_{2}$ where 
\begin{align*}
\mathrm{Hess}_{g_1}w'_{1} & =\lambda w'_{1},\\
\mathrm{Hess}_{g_2}w'_{2} & =-\lambda w'_{2}.
\end{align*}
\end{proof}

This gives us the following partial converse to Lemma \ref{Lemma:oWEssential}

\begin{proposition} Let $(M,g)$ be a simply connected homogeneous
locally conformally flat manifold. Then there is a unique $\mathrm{Isom}(M,g)$-invariant
trace free quadratic form $q$ such that $\mathring{W}(q)$ is an
essential $\left(n+2\right)$-dimensional space of functions. \end{proposition}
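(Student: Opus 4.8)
The plan is to invoke Takagi's classification, Theorem \ref{thm:Takagi}. Since $M$ is simply connected all the discrete quotients are trivial, so $(M,g)$ is one of $S^{n}(\kappa)$, $\bbR^{n}$, $H^{n}(-\kappa)$, $S^{k}(\kappa)\times H^{n-k}(-\kappa)$ with $2\le k\le n-2$, $\bbR^{1}\times H^{n-1}(-\kappa)$, or $S^{n-1}(\kappa)\times\bbR^{1}$. The first step is to identify, in each case, the $\mathrm{Isom}(M,g)$-invariant trace-free symmetric two-tensors. For the three space forms the isotropy representation is irreducible, so every invariant symmetric two-tensor is a multiple of $g$ and $q=0$ is the only trace-free one. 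For each of the three products the de Rham decomposition has the spherical or hyperbolic factor irreducible, so $\mathrm{Isom}(M,g)=\mathrm{Isom}(M_{1})\times\mathrm{Isom}(M_{2})$ and the invariant symmetric two-tensors are exactly $c_{1}g_{1}+c_{2}g_{2}$; the trace-free ones therefore form a one-parameter family $q_{\lambda}$, which in the notation of Proposition \ref{Prop:oWProductCalc} I index by $\lambda=c_{1}-c_{2}$ (subject to $\dim(M_{1})c_{1}+\dim(M_{2})c_{2}=0$).

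For the space-form cases one has $\mathring{W}(M,g,0)=\mathring{V}$, which on a simply connected space form is $(n+2)$-dimensional (constants and ambient linear coordinates, together with $|x|^{2}$ in the flat case); since $\dim W(M,g,q')\le n+1$ for every $q'$ by \cite[Proposition 1.1]{He-Petersen-Wylie}, this $\mathring{W}$ is essential, and $q=0$ being the only invariant trace-free candidate, both existence and uniqueness follow at once. For the product cases I compute $\dim\mathring{W}(q_{\lambda})$. If $\lambda=0$ then $q_{0}=0$ and $\mathring{W}(0)=\mathring{V}=V$ by Proposition \ref{Prop:Products}, which has dimension at most $2$; if $\lambda\neq 0$, Proposition \ref{Prop:oWProductCalc} applies and gives $\mathring{W}(q_{\lambda})=W(M_{1},\lambda g_{1})\oplus W(M_{2},-\lambda g_{2})$, reducing matters to the dimension of $W$ on the factors. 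Here I use the standard count: on an $m$-dimensional space form of curvature $c$ with $m\ge 2$, the space $W(M^{m}(c),\mu g)$ has dimension $m+1$ when $\mu=-c$ and dimension at most $1$ otherwise, while on the one-dimensional factor $\bbR^{1}$ the equation $w''=\mu w$ always has a two-dimensional solution space. Matching curvatures, the requirement $\dim\mathring{W}(q_{\lambda})=n+2$ then forces the two factors to be a round sphere and a hyperbolic space of opposite curvatures (or one of them to be $\bbR^{1}$) and holds for the single value $\lambda=-\kappa$, every other $\lambda$ giving $\dim\mathring{W}(q_{\lambda})\le 2<n+2$. Thus $q_{-\kappa}$ is the unique invariant trace-free tensor with $\mathring{W}$ of dimension $n+2$, and it is essential by the same dimension comparison; a direct check shows that in all cases the resulting $q$ is the trace-free part of a scalar multiple of $\Ric$, consistent with the conformally Einstein interpretation discussed in Section \ref{Section-QE}.

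The main obstacle is the quoted dimension count for $W(M^{m}(c),\mu g)$ when $c<0$. On the sphere it is immediate from compactness and the eigenvalue equation $\Delta w=m\mu w$, but on $H^{m}$ one must argue that a non-constant solution of $\Hess w=\mu w g$ has umbilic regular level sets, that the umbilic hypersurfaces of $H^{m}$ are geodesic spheres, horospheres, equidistant hypersurfaces and totally geodesic ones, and that running the radial ordinary differential equation along each of these families pins $\mu$ down to $-c$; alternatively one can cite the statements underlying Theorem \ref{Thm:WPR} from \cite{He-Petersen-Wylie}. Everything else is bookkeeping over the six entries of Takagi's list together with the elementary ODE on the $\bbR^{1}$ factors.
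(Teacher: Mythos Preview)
Your argument is correct and follows essentially the same route as the paper's: invoke Takagi's classification on the simply connected cover, use isotropy to reduce the invariant trace-free tensors to $q=0$ in the space-form cases and to the one-parameter family $c_1g_1+c_2g_2$ in the product cases, then apply Proposition~\ref{Prop:oWProductCalc} together with the dimension count for $W(\lambda g)$ on each factor to single out $\lambda=-\kappa$. The paper's proof is terser---it simply records the values $\dim W(S^k(\kappa),\lambda g)$ and $\dim W(H^k(-\kappa),\lambda g)$ without the discussion you give in your final paragraph---but the logic is identical, and your explicit treatment of the $\lambda=0$ case via Proposition~\ref{Prop:Products} and your use of the universal bound $\dim W\le n+1$ to certify essentiality are welcome clarifications rather than a different method.
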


\begin{proof}   For a simply connected space form, since the isometry
group acts isotropically, the only $\mathrm{Isom}(M,g)$-invariant
trace free quadratic form is the zero tensor and we have already seen
that this is an essential $\left(n+2\right)$-dimensional space. For
the product cases in Theorem \ref{thm:Takagi}, since $q$ is assumed to be $\mathrm{Isom}(M,g)$-invariant
and the isometry groups split in these cases and act isotropically
on each factor, $q=c_{1}g_{B}+c_{2}g_{F}$. For $\bbR^{1}$, $\dim(W(\lambda g))=2$
for all $\lambda$. For $S^{k}(\kappa)$, $\dim(W(-\kappa g))=k+1$,
$\dim W(0)=1$, and $\dim(W(\lambda g))=0$ for $\lambda\neq0,-\kappa$.
For $H^{k}(-\kappa)$, $\dim(W(\kappa g))=k+1$, $\dim W(0)=1$, and
$\dim(W(\lambda g))=0$ for $\lambda\neq0,-\kappa$.

Proposition \ref{Prop:oWProductCalc} shows that if $c_{1}-c_{2}=-\kappa$
then $\mathring{W}$ has dimension $n+2$ and is essential. In addition
\[
\mathring{q}=c_{1}g_{B}+c_{2}g_{F}-\frac{kc_{1}+\left(n-k\right)c_{2}}{n}\left(g_{B}+g_{F}\right)=-\frac{n-k}{n}\kappa g_{B}+\frac{k}{n}\kappa g_{F}.
\]

\end{proof}

This now gives us the structure theorem for $\mathring{W}$.

\begin{theorem} \label{Thm:oW-structure} Let $(M,g)$ be a $G$-homogeneous
manifold and let $q$ be a $G$-invariant tensor such that there is
a non-constant function in $\mathring{W}(M,g,q)$, then $(M,g)$
is isometric to either 

\begin{enumerate}
\item a locally conformally flat space, 
\item a direct product of a homogeneous space and a space of constant curvature
with $\mathring{W}$ consisting of functions on the constant curvature
factor,
\item $(N\times \mathbb{R}/\pi_1(M)$ with  $W=\{w:\mathbb{R}\rightarrow \mathbb{R} \mid w''=\tau w\}$ where $\tau<0$ is constant, or 
\item a one-dimensional extension of a homogeneous space. 
\end{enumerate}
Moreover, when $(M,g)$ is not conformally flat, $\mathring{W}(q)=W(q')$,
where $q'$ is a $G$-invariant tensor of the form $q'=q-\lambda g$
for some $\lambda\in\mathbb{R}$. If, in addition, $q$ is Codazzi, then $(M,g)$ is isometric to one of the cases (1)-(3).  \end{theorem}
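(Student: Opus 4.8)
The plan is to split the argument according to whether $\mathring{W}(q)$ is essential, drawing only on rigidity results already established. If $\mathring{W}(q)$ is essential, then Lemma \ref{Lemma:oWEssential} says immediately that $(M,g)$ is locally conformally flat, which is conclusion (1), and nothing further is required in this branch.

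The substance lies in the inessential case. By the proposition characterizing inessential $\mathring{W}$ (the one following the definition of essential $\mathring W$), there is a quadratic form $q'$ with $\mathring{W}(M,g,q)=W(M,g,q')$, and one may take $q'=q-\phi\,g$ for some $\phi\in C^{\infty}(M)$; the same proposition also guarantees that $q'$ is $G$-invariant because $q$ is. The one genuinely new step is to upgrade $\phi$ to a constant: since $\phi\,g=q-q'$ is a $G$-invariant tensor, for every $\gamma\in G$ we have $(\phi\circ\gamma)\,g=\gamma^{*}(\phi\,g)=\phi\,g$, hence $\phi\circ\gamma=\phi$, and transitivity of $G$ forces $\phi$ to be a constant $\lambda\in\bbR$. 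Thus $\mathring{W}(q)=W(q')$ with $q'=q-\lambda g$ a $G$-invariant tensor, which is exactly the ``Moreover'' assertion; and the hypothesis that $M$ is not conformally flat places us in this branch by the contrapositive of Lemma \ref{Lemma:oWEssential}.

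To finish, I would apply Theorem \ref{Thm:W} to $(M,g,q')$. The hypothesis supplies a non-constant function in $\mathring{W}(q)=W(q')$, so $W(q')$ is nontrivial, and Theorem \ref{Thm:W} puts $(M,g)$ into one of its four cases, each mapping onto a case here: a space of constant curvature is locally conformally flat, giving (1); a product of a homogeneous space with a constant-curvature space, with $W(q')$ consisting of functions on the constant-curvature factor, gives (2), and since $W(q')=\mathring{W}(q)$ the assertion about $\mathring{W}$ in (2) holds verbatim; the quotient $(H\times\bbR)/\pi_1(M)$ with $W=\{w:\bbR\to\bbR\mid w''=\tau w\}$, $\tau<0$, gives (3); and a one-dimensional extension gives (4). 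Together with the essential case this exhausts all possibilities.

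This is largely an assembly of earlier results, so I do not anticipate a serious obstacle. The only new ingredient is the transitivity argument turning $\phi$ into a constant, and the one point warranting care is confirming that Theorem \ref{Thm:W}'s description of the solution space on the constant-curvature factor transfers to the statement about $\mathring{W}(q)$ in conclusion (2) — which it does precisely because the two solution spaces coincide.
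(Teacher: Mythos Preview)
Your proposal is correct and follows essentially the same approach as the paper's proof: split into the essential and inessential cases, invoke Lemma \ref{Lemma:oWEssential} for the former, and in the latter use the characterization of inessential $\mathring{W}$ together with transitivity of $G$ to force $\phi$ to be constant before applying Theorem \ref{Thm:W}. Your write-up is in fact more explicit than the paper's in tracking how each case of Theorem \ref{Thm:W} maps onto the conclusions here and in verifying that the description of $\mathring{W}$ on the constant-curvature factor in (2) transfers via the identity $\mathring{W}(q)=W(q')$.
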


\begin{proof} If $\mathring{W}(q)$ is essential, then $(M,g)$ is
locally conformally flat. If $\mathring{W}(q)$ is inessential, then
$\mathring{W}(q)=W(q')$ and $q'=q-\phi g$. Since $q$ and $q'$
are both invariant under the transitive group $G$, $\phi$ is constant.
Then applying Theorem \ref{Thm:W} to $W(q')$ gives the result.

\end{proof}

Now we prove Theorem \ref{Thm:CompactIntro} from the introduction. 

\begin{theorem} Suppose that $(M,g)$ is a compact locally homogeneous
manifold and $q$ a local isometry invariant symmetric two tensor. 
\begin{enumerate}
\item If $\mathring{F}(q)$ contains a non-constant function, then $(M,g)$
is a spherical space form. 
\item If $\mathring{W}(q)$ is non-trivial, then $(M,g)$ is a direct product
of a homogeneous space $N$ and a spherical space form, isometric to $(N\times \mathbb{R})/\pi_1(M)$, or isometric
to $(S^{n-1}(\kappa)\times\bbR^{1})/\Gamma$. 
\end{enumerate}
In particular any positive function in  $\mathring{F}(q)$ or $\mathring{W}(q)$  must be constant.
\end{theorem}

\begin{proof}
Let $(M,g)$ is locally homogeneous and $f \in \mathring{F}(q)$, let $(\widetilde{M}, \widetilde{g})$ be the universal cover with covering metric and let $\widetilde{q} = \pi^* q$ be the pullback of $q$ to the universal cover.  Then the pullback function $\widetilde{f} = \pi^* f$ is in $\mathring{F}(\widetilde{M}, \widetilde{g}, \widetilde{q})$.   Then, since $q$ is invariant under local isometries of $(M,g)$, it is invariant under the isometry group of $(\widetilde{M}, \widetilde{g})$.  We can then apply  Theorem \ref{Thm:TF} to conclude that $(\widetilde{M}, \widetilde{g})$ is a sphere as $\widetilde{f}$ is a bounded function and none of the other possibilities given by Theorem \ref{Thm:TF} admit a function in $\mathring F$ which is bounded. 

The cases of $W$ and $\mathring W$ are similar in that we can apply Theorems \ref{Thm:W} and \ref{Thm:oW-structure} respectively to the universal covers and the only possibility for a having a bounded function are the ones given.

We note that none of these spaces in the conclusion of the theorem admit a positive function because all of the solutions on the sphere have zeroes.   
\end{proof}

\section{Quasi-Einstein and conformally Einstein metrics}
 \label{Section-QE}
In this section, we apply the structure theorems from the previous
sections for $W$ and $\mathring{W}$ to the tensor $q=\frac{1}{m}\mathrm{Ric}$
for a constant $m$. The corresponding equations 
\begin{align*}
\hess w & =\frac{w}{m}(\mathrm{Ric}-\lambda g)\\
\mathring{\hess w} & =\frac{w}{m}\mathring{\mathrm{Ric}}
\end{align*}
are often called the $m$-quasi Einstein and generalized $m$-quasi-Einstein
equations respectively. In the literature, the equations are often
considered in the case where $w$ is a positive function and then
the equations can be re-written in terms of $f$ when $w=e^{f}$,
but our results do not require $w$ to be positive. When $m>1$ is
an integer, solutions to the $m$-quasi Einstein equation correspond
to warped product Einstein metrics. Theorem \ref{Thm:GmQE} is a generalization
of structure results obtained by the authors with He in \cite{He-Petersen-Wylie-Homogeneous}.
When $m>1$, Lafuente in \cite{Lafuente} further showed that if $M$ is a homogeneous
$m$-quasi Einstein metric that is a one-dimensional extension of
a homogeneous space, $N$, then $N$ is an algebraic Ricci soliton.

Thus, directly combining this work with Theorem \ref{Thm:oW-structure},
we obtain a result for homogeneous generalized $m$-quasi Einstein
metrics when $m>1$.

\begin{theorem} \label{Thm:GmQE} Suppose $(M,g)$ is homogeneous
Riemannian manifold that admits a non-constant function $w$ which
solves the generalized $m$-quasi Einstein equation for some $m>1$.
Either $M$ is a locally conformally flat space, a product of an Einstein
metric and a space of constant curvature, the quotient of the product of a homogeneous space and $\mathbb{R}$,  $(H\times \mathbb{R})/\pi_1(M)$, or a one-dimensional extension
of an algebraic Ricci soliton metric. \end{theorem}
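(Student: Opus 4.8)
The plan is to recognize the generalized $m$-quasi Einstein equation $\mathring{\hess w}=\frac{w}{m}\mathring{\mathrm{Ric}}$ as the statement that $w\in\mathring{W}(M,g,q)$ for the tensor $q=\frac{1}{m}\mathrm{Ric}$, and then to apply Theorem~\ref{Thm:oW-structure}. Since the Ricci tensor is invariant under every isometry, $q$ is invariant under the transitive group $G=\mathrm{Isom}(M,g)$, so Theorem~\ref{Thm:oW-structure} applies and places $(M,g)$ in one of its four cases. I would then match each of these with one of the alternatives in Theorem~\ref{Thm:GmQE}: cases (1) and (3) of Theorem~\ref{Thm:oW-structure} (``locally conformally flat'' and ``$(H\times\mathbb{R})/\pi_1(M)$'') are already on the list, so only cases (2) and (4) require an argument.

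In both remaining cases we may assume $(M,g)$ is not conformally flat, since otherwise it falls under the first alternative. Then the ``moreover'' part of Theorem~\ref{Thm:oW-structure} gives $\mathring{W}(q)=W(q')$ with $q'=q-\lambda g=\frac{1}{m}\left(\mathrm{Ric}-m\lambda g\right)$ for some $\lambda\in\mathbb{R}$. Writing $\mu=m\lambda$, the function $w$ therefore satisfies the genuine $m$-quasi Einstein equation $\hess w=\frac{w}{m}\left(\mathrm{Ric}-\mu g\right)$. This is the step where care is needed: the reduction from $\mathring{W}$ to $W$ must be tracked precisely so that the resulting tensor $q'$ is again of the form (a constant times) $\mathrm{Ric}-\mu g$, rather than some unrelated invariant tensor; this is automatic because adding a multiple of $g$ to $\frac{1}{m}\mathrm{Ric}$ stays in that family.

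For case (2), $(M,g)$ is a direct product $N\times F$ of a homogeneous space and a space of constant curvature, with $w$ a function of the $F$ factor only. I would restrict the $m$-quasi Einstein equation to vectors tangent to $N$: since $w$ is constant along $N$ and the metric is a Riemannian product, $\hess w$ vanishes on $TN\times TN$, while $\mathrm{Ric}_g$ restricted to $TN$ equals $\mathrm{Ric}_N$. Hence $w\left(\mathrm{Ric}_N-\mu g_N\right)=0$ on the nonempty open set where $w\neq0$, forcing $\mathrm{Ric}_N=\mu g_N$, so $N$ is Einstein; this is the ``product of an Einstein metric and a space of constant curvature'' alternative. For case (4), $(M,g)$ is a one-dimensional extension of a homogeneous space $N$, and since $w$ solves the $m$-quasi Einstein equation with $m>1$, Lafuente's theorem~\cite{Lafuente} applies to $N$ and shows it is an algebraic Ricci soliton, which is the final alternative. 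This completes the case analysis.

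The only substantial input beyond Theorem~\ref{Thm:oW-structure} is Lafuente's rigidity result for the base of a homogeneous $m$-quasi Einstein one-dimensional extension, and this is the sole place the hypothesis $m>1$ is used; the remainder is the bookkeeping indicated above together with the elementary fact that in a product metric the Hessian of a function pulled back from one factor vanishes in the directions of the other factor. The main obstacle, such as it is, is purely organizational: making sure the ``inessential'' reduction lands $w$ back in the $m$-quasi Einstein family and correctly identifying the Einstein factor in the product case.
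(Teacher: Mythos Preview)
Your proposal is correct and follows exactly the approach the paper intends: the paper's own proof is the single sentence ``Thus, directly combining this work with Theorem~\ref{Thm:oW-structure}, we obtain a result for homogeneous generalized $m$-quasi Einstein metrics when $m>1$,'' and your write-up simply unpacks that sentence---applying Theorem~\ref{Thm:oW-structure} to $q=\frac{1}{m}\mathrm{Ric}$, using the inessential reduction to land in the genuine $m$-quasi Einstein equation, reading off the Einstein condition on the $N$ factor in the product case, and invoking Lafuente in the one-dimensional extension case. There is nothing to add or correct.
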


The construction in \cite{He-Petersen-Wylie-Homogeneous} also shows
that any algebraic Ricci soliton metric can be extended to a warped
product Einstein metric and that the derivation used to extend the
soliton is a multiple of the soliton derivation.

When $m<0$ the $m$-quasi Einstein equation does not seem to have
been studied in depth. In fact, we will see below that the question
of which spaces have one-dimensional extensions that are quasi-Einstein
is more complicated in this case. As a simple example of the difference
between the $m>0$ and $m<0$ cases consider the $m$-quasi Einstein
structures on $S^{n}$ and $H^{n}$.

\begin{example} Consider $S^{n}(\kappa)$ or $H^{n}(-\kappa)$, the
spaces of constant curvature $\pm\kappa$. Clearly $\mathrm{Ric}=\pm\kappa(n-1)g$,
but there are non-constant functions satisfying $\mathrm{Hess}w=\mp\kappa wg$.
So 
\[
\mathrm{Ric}-\frac{m}{w}\mathrm{Hess}w=\pm\kappa(n+m-1)g.
\]
In particular, when $m<-(n-1)$, then $H^{n}(-\kappa)$ has $\lambda>0$
and $S^{n}(-\kappa)$ has $\lambda<0$. Note that hyperbolic space
is a one-dimensional extension of Euclidean space, so it is possible
to have $\lambda>0$ for a one-dimensional extension, at least when
$m<-(n-1)$. \end{example}

Of special interest is the case $m=2-n$, $n\geq3$, where the equation
\[
\mathring{\mathrm{Hess}w}=\frac{w}{2-n}\mathring{\mathrm{Ric}}
\]
is the almost Einstein equation. If there is a positive solution to this equation we call the space conformally Einstein. Theorem \ref{Thm:oW-structure} shows that the only interesting homogeneous almost Einstein metrics are conformally Einstein. 

In dimension 4, homogeneous conformally
Einstein spaces are classified in \cite{4d-Conformal} by studying the Bach tensor of homogeneous $4$-manifolds. In
the classification, any non-symmetric space example is homothetic
to one of three families of one-dimensional extensions of $3$-dimensional
Lie algebras. One of the examples (case (ii) of \cite[Theorem 1.1]{4d-Conformal}) is a one-dimensional
extension of the Ricci soliton on the $3$-dimensional Heisenberg
group, the other two families are extensions of the abelian Lie algebra
and the extension derivations are not soliton derivations. In particular,
these non-soliton families have $\lambda=0$. Another difference when
$m<0$ is that not all algebraic solitons can be extended to $m$-quasi
Einstein metrics when $m<0$ as, for example, the solvable $3$-dimensional
soliton cannot be extended to a conformally Einstein metric.

Inspired by these examples we give two constructions of $m$-quasi
Einstein metrics for any dimension $n$ and parameter $m$. First we consider when we can extend an algebraic Ricci soliton to
an $m$-quasi Einstein metric for general $m$.

\begin{proposition} Let $(H^{n-1},h)$ be an algebraic Ricci soliton
metric 
\[
\mathrm{Ric}=\lambda I+D.
\]
There is a non-Einstein homogeneous $m$-quasi Einstein metric with Lie algebra
$\bbR\xi\ltimes\frak{h}$, where $\mathrm{ad}_{\xi}=\alpha D$ for
some constant $\alpha$, if and only if $\tr D>m\lambda$. \end{proposition}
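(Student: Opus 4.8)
The plan is to carry out the extension explicitly and then read off from the $m$-quasi-Einstein equation, with $w=e^{ar}$, exactly when a non-Einstein solution can occur. Form the Lie algebra $\mathfrak{g}=\bbR\xi\ltimes\mathfrak{h}$ with $\ad_{\xi}|_{\mathfrak{h}}=\alpha D$ (this is a Lie algebra precisely because $D$, and hence $\alpha D$, is a derivation), and put on the simply connected group $G$ the left-invariant metric $g$ for which $\xi$ is a unit vector orthogonal to $\mathfrak{h}$ and whose restriction to $\mathfrak{h}$ is $h$. Since $\mathfrak{h}$ is an ideal, $\mathfrak{g}\to\bbR$ exists and, by Proposition \ref{Def:Semi-Split}, $(G,g)$ is a one-dimensional extension with $g=dr^{2}+g_{r}$ and $\xi=\nabla r$; because $D=\ric_h-\lambda I$ is symmetric, $\hess r=\alpha D$ on $\mathfrak{h}$ and $\hess r(\xi,\cdot)=0$. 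Using $\hess(e^{ar})=e^{ar}\bigl(a^{2}dr^{2}+a\,\hess r\bigr)$ (the formula already recorded in Section 5), the left-hand side of $\hess w=\frac{w}{m}(\ric_g-\mu g)$ becomes completely explicit: its $\xi\xi$-entry is $a^{2}w$, its mixed entries vanish, and its $\mathfrak{h}$-block is $a\alpha w\,D$.

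For the right-hand side I would invoke the standard formula for the Ricci curvature of a one-dimensional extension, as developed in \cite{He-Petersen-Wylie-Homogeneous}: because $D$ is a symmetric derivation and $\ric_h=\lambda I+D$, the tensor $\ric_g$ is block-diagonal with respect to $\bbR\xi\oplus\mathfrak{h}$, with
\[
\ric_g(\xi,\xi)=-\alpha^{2}\tr(D^{2}),\qquad \ric_g|_{\mathfrak{h}}=\ric_h-\alpha^{2}(\tr D)D=\lambda h+\bigl(1-\alpha^{2}\tr D\bigr)D .
\]
The one nontrivial ingredient is the vanishing of the mixed block $\ric_g(\xi,X)$, equivalently $\tr(D\circ\ad_{X})=0$ for all $X\in\mathfrak{h}$ --- this is exactly the feature of algebraic solitons behind the Heber--Lauret Einstein extension, and I expect justifying it (via the cited work) to be the main obstacle in the proof. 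I would also use the fact recorded just above in the text, that $\bbR\xi\ltimes\mathfrak{h}$ with $\ad_{\xi}=\alpha_{0}D$ is Einstein for a suitable multiple $\alpha_{0}D$ of the soliton derivation; since an Einstein metric has $\ric_g|_{\mathfrak{h}}$ proportional to $h$, the $D$-block above forces $\alpha_{0}^{2}=1/\tr D$, and comparing $h$-coefficients then forces the identity $\tr(D^{2})=-\lambda\tr D$. That identity is all I will need about the numbers $\lambda,\tr D,\tr(D^{2})$.

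Matching $\hess w=\frac{w}{m}(\ric_g-\mu g)$ block by block now forces $\mu=\lambda$ and leaves the two scalar equations $ma\alpha=1-\alpha^{2}\tr D$ and $ma^{2}=-\alpha^{2}\tr(D^{2})-\lambda$. Eliminating $a$ and substituting $\tr(D^{2})=-\lambda\tr D$, the unknown $u=\alpha^{2}$ must satisfy
\[
\tr D\,(\tr D-m\lambda)\,u^{2}-(2\tr D-m\lambda)\,u+1=0 ,
\]
whose roots are $u=1/\tr D$ and $u=1/(\tr D-m\lambda)$. The first forces $a=0$ and merely returns the Einstein extension, which we discard. The second gives $a=-\lambda/\bigl(\alpha(\tr D-m\lambda)\bigr)\neq0$ (as $\lambda\neq0$ for a nontrivial soliton), and defines a genuine homogeneous $m$-quasi-Einstein metric; that metric is not Einstein, since $\hess w$ is then a nonzero tensor whose $\mathfrak{h}$-block is a multiple of $D$, while $D$ is not a multiple of $h$ for a nontrivial soliton. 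Since $u=\alpha^{2}$ must be a positive real number, this non-Einstein solution exists precisely when $\tr D-m\lambda>0$, i.e. $\tr D>m\lambda$. Conversely, any non-Einstein extension of the prescribed form has $a\neq0$, hence $u\neq1/\tr D$, so $u=1/(\tr D-m\lambda)$ must be positive, again giving $\tr D>m\lambda$. Finally the degenerate possibilities --- $\tr D=m\lambda$ or $\tr D=0$, where the quadratic drops degree, and $\lambda=0$, where the soliton is flat and the assertion is vacuous --- are checked by hand and yield only Einstein metrics, completing both implications.
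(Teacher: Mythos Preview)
Your proof is correct and follows essentially the same route as the paper: both compute $\ric_g$ via \cite[Lemma 2.9]{He-Petersen-Wylie-Homogeneous}, use the soliton identities $D=D^{*}$, $\div D=0$, $\tr(D^{2})=-\lambda\tr D$, match against $\hess e^{ar}$, and reduce to the same pair of scalar equations (your sign convention on $\hess r$ is opposite to the paper's, which merely flips the sign of $a$). The only substantive difference is cosmetic---where the paper eliminates $a$ linearly to get $a=\alpha\lambda$ and then $1=\alpha^{2}(\tr D-m\lambda)$, you instead form and factor the quadratic in $u=\alpha^{2}$; and your parenthetical that the mixed block vanishes ``equivalently $\tr(D\circ\ad_X)=0$'' is slightly off (the paper obtains $\ric_g(X,\xi)=0$ more directly from $\div D=\div\ric_h=0$), but this does not affect the argument.
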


\begin{remark} For an algebraic Ricci soliton, $\tr D>0$ and $\lambda<0$,
so the condition is trivially satisfied when $m>0$. Also note that
tracing the soliton equation gives $\tr(D)=\scal-(n-1)\lambda$, so
the condition is equivalent to $\scal>(n+m-1)\lambda$. For the conformal
Einstein case, $m=2-n$ the condition is $\scal>\lambda$. \end{remark}

\begin{remark} For the soliton on the three-dimensional Heisenberg group $\scal=\lambda/3$
while for the soliton on the three-dimensional Lie group Sol $\scal=\lambda$. In particular, the
three dimensional Heisenberg group can be extended to a conformally Einstein metric,
but Sol can only be extended to a $m$-quasi Einstein metric when
$m>-2$. \end{remark}

\begin{proof} By \cite[Lemma 2.9]{He-Petersen-Wylie-Homogeneous} the Ricci tensor of such a one-dimensional extension
is 
\begin{eqnarray}
\mathrm{Ric}\left(\xi,\xi\right) & = & -\alpha^{2}\tr(S^{2}),\nonumber \\
\mathrm{Ric}\left(X,\xi\right) & = & -\alpha\mathrm{div}(S),\label{eqn:RicS}\\
\mathrm{Ric}\left(X,X\right) & = & \mathrm{Ric}^{H}(X,X)-\left(\alpha^{2}\mathrm{tr}S\right)h\left(S(X),X\right)-\alpha^{2}h([S,A](X),X),\nonumber 
\end{eqnarray}
where $S=\frac{D+D^{t}}{2}$ and $A=\frac{D-D^{t}}{2}$. For an algebraic
Ricci soliton, $D$ is symmetric so $S=D$, $A=0$, $\mathrm{div}(D)=\mathrm{div}(\mathrm{Ric})=0$,
and $\mathrm{tr}(D^{2})=-\lambda\mathrm{tr}(D)$, so we have 
\begin{eqnarray*}
\mathrm{Ric}\left(\xi,\xi\right) & = & \lambda\alpha^{2}\tr D,\\
\mathrm{Ric}\left(X,\xi\right) & = & 0,\\
\mathrm{Ric}\left(X,X\right) & = & \lambda g+\left(1-\alpha^{2}\mathrm{tr}D\right)h\left(D(X),X\right).
\end{eqnarray*}
When we write $w=e^{ar}$, then $\mathrm{Hess}w=wa^{2}dr\otimes dr-wa\alpha h(S(\cdot),\cdot)$ (see the proof of \cite[Theorem 3.3] {He-Petersen-Wylie-Homogeneous})
and 
\begin{eqnarray*}
\left(\mathrm{Ric}-\frac{m}{w}\mathrm{Hess}w\right)(\xi,\xi) & = & \lambda\alpha^{2}\tr D-ma^{2},\\
\left(\mathrm{Ric}-\frac{m}{w}\mathrm{Hess}w\right)(X,X) & = & \lambda h(X,X)+(1-\alpha^{2}\mathrm{tr}D+ma\alpha)h\left(S(X),X\right).\\
\end{eqnarray*}
So, if we want to obtain $\mathrm{Ric}-\frac{m}{w}\mathrm{Hess}w=\lambda g$,
then we have to solve the equations

\begin{eqnarray*}
\lambda & = & \lambda\alpha^{2}\tr D-ma^{2},\\
1 & = & \alpha^{2}\tr D-ma\alpha
\end{eqnarray*}
for the unknown constants $\alpha$ and $a$. Multiplying the second
equation by $\lambda$ and subtracting the two equations gives that
either $a=0$ or $a=\alpha\lambda$. The $a=0$ case is the Einstein
case, so we take $a=\alpha\lambda$. Plugging this back into the system
gives 
\[
1=\alpha^{2}(\tr D-m\lambda)
\]
so there exists such an $\alpha$ if and only if $\tr D>m\lambda$.

\end{proof}

\begin{proposition} Let $\frak{h}$ be an abelian Lie algebra and
$D$ a normal derivation of $\frak{h}$ such that 
\[
\tr(S^{2})=-\frac{\tr(S)^{2}}{m},
\]
where $S=\frac{D+D^{t}}{2}$, then there is a homogeneous $m$-quasi
Einstein metric with Lie algebra $\bbR\xi\ltimes\frak{h}$ where $\mathrm{ad}_{\xi}=D$
and $\lambda=0$. \end{proposition}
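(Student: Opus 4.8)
The plan is to exhibit the metric and a function $w=e^{ar}$ explicitly and verify $\mathrm{Hess}\,w=\frac{w}{m}\mathrm{Ric}$ directly, which is the $m$-quasi Einstein equation with $\lambda=0$. I would set up the one-dimensional extension as in \cite{He-Petersen-Wylie-Homogeneous}: extend the flat metric $h$ on the abelian algebra $\mathfrak{h}$ to a left invariant metric on $G=\mathbb{R}\ltimes H$ by taking $\xi$ of unit length, orthogonal to $\mathfrak{h}$, and $\mathrm{ad}_{\xi}=D$. This $g$ is a one-dimensional extension of the flat homogeneous space $H$, so $g=dr^{2}+g_{r}$ with $\nabla r=\xi$, and it is homogeneous.

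Write $S=\frac{D+D^{t}}{2}$ and $A=\frac{D-D^{t}}{2}$. The hypotheses simplify the Ricci formula (\ref{eqn:RicS}) (specialized to $\mathrm{ad}_{\xi}=D$, i.e. $\alpha=1$): since $\mathfrak{h}$ is abelian, $\mathrm{Ric}^{H}=0$; since $D$ is normal, $[S,A]=0$; and the shape operator of the orbit $H$ is the left invariant tensor $S$, which is parallel for the (flat) induced connection on $H$, so $\mathrm{div}(S)=0$. Thus $\mathrm{Ric}(\xi,\xi)=-\mathrm{tr}(S^{2})$, $\mathrm{Ric}(X,\xi)=0$, and $\mathrm{Ric}(X,X)=-(\mathrm{tr}\,S)\,h(S(X),X)$ for $X\in\mathfrak{h}$. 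For $w=e^{ar}$ one has $q:=\frac{1}{w}\mathrm{Hess}\,w=a^{2}\,dr\otimes dr+a\,\mathrm{Hess}\,r=a^{2}\,dr\otimes dr-a\,h(S(\cdot),\cdot)$, so $q(\xi,\xi)=a^{2}$, $q(X,\xi)=0$, and $q(X,X)=-a\,h(S(X),X)$.

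Putting these together, $\mathrm{Ric}-mq$ vanishes on $(X,\xi)$, equals $(ma-\mathrm{tr}\,S)\,h(S(\cdot),\cdot)$ on $\mathfrak{h}$, and equals $-\mathrm{tr}(S^{2})-ma^{2}$ on $(\xi,\xi)$. I would then choose $a=\mathrm{tr}(S)/m$, which kills the $\mathfrak{h}$-component, and observe that the $(\xi,\xi)$-component becomes $-\mathrm{tr}(S^{2})-(\mathrm{tr}\,S)^{2}/m$, which is $0$ precisely by the standing hypothesis $\mathrm{tr}(S^{2})=-(\mathrm{tr}\,S)^{2}/m$. Hence $\mathrm{Ric}=\frac{m}{w}\mathrm{Hess}\,w$, i.e. the metric is $m$-quasi Einstein with $\lambda=0$. (If $\mathrm{tr}\,S=0$ the hypothesis forces $S=0$, so $D$ is skew and the extension is the flat product $\mathbb{R}\times H$, which is trivially $m$-quasi Einstein; otherwise $a\neq0$ and $w$ is genuinely non-constant.)

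The computation is essentially a one-line substitution once the normalizations are in place, so the only thing that needs care is matching the sign conventions of the Ricci and Hessian formulas of \cite{He-Petersen-Wylie-Homogeneous} — which the $\alpha=1$ specialization of the preceding algebraic-soliton computation pins down — and checking that the mixed term $\mathrm{Ric}(X,\xi)=-\mathrm{div}(S)$ really vanishes for the flat abelian orbit.
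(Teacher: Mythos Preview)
Your proposal is correct and follows essentially the same route as the paper: specialize the Ricci formula (\ref{eqn:RicS}) with $\alpha=1$, use that $\mathfrak{h}$ abelian gives $\mathrm{Ric}^{H}=0$ and $\mathrm{div}(S)=0$ while normality of $D$ kills the $[S,A]$ term, then set $a=\tr(S)/m$ and invoke the hypothesis $\tr(S^{2})=-\tr(S)^{2}/m$ to make both remaining components of $\mathrm{Ric}-\frac{m}{w}\mathrm{Hess}\,w$ vanish. Your added remark on the degenerate case $\tr S=0$ and your explicit justification that $\mathrm{div}(S)=0$ are small clarifications beyond what the paper writes, but the argument is the same.
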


\begin{remark} Taking $n=4$, $m=-2$, we obtain the condition that
$2\tr(S^{2})=\tr(S)^{2}$. The examples in \cite{4d-Conformal} have these
properties. \end{remark}

\begin{proof} We again use the equations (\ref{eqn:RicS}). Since
$\frak{h}$ is abelian, $\mathrm{Ric}^{H}=0$, and $\mathrm{div}(S)=0$
for any $D$, it follows that

\begin{eqnarray*}
\left(\mathrm{Ric}-\frac{m}{w}\mathrm{Hess}w\right)(\xi,\xi) & = & -\tr(S^{2})-ma^{2},\\
\left(\mathrm{Ric}-\frac{m}{w}\mathrm{Hess}w\right)(X,\xi) & = & 0,\\
\left(\mathrm{Ric}\left(X,X\right)-\frac{m}{w}\mathrm{Hess}w\right)(X,X) & = & -\left(\mathrm{tr}S-ma\right)h\left(S(X),X\right).
\end{eqnarray*}
When $a=\frac{\tr S}{m}$ the condition $\tr(S^{2})=-\frac{\tr(S)^{2}}{m}$
shows that both equations vanish. \end{proof}

Conversely, we have the following necessary conditions for any $m$-quasi
Einstein metric and, if the derivation is normal, the following
partial converse.


\begin{proposition} Suppose that there is a homogeneous $m$-quasi
Einstein metric with Lie algebra $\bbR\xi\ltimes\frak{h}$ where $\mathrm{ad}_{\xi}=D$
and $w=e^{ar}$. It follows that $\mathrm{div}(S)=0$ and $\tr(S^{2})=-a\tr(S)$.
Moreover, if $D$ is normal, then either $(H^{n-1},h)$ is a Ricci
soliton or $(H,h)$ is a flat space and $\tr(S^{2})=-\frac{(\tr(S))^{2}}{m}$.
\end{proposition}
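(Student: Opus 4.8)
The plan is to substitute $w=e^{ar}$, with $r$ the distance function coming from the one-dimensional extension structure of Proposition \ref{Def:Semi-Split}, into the $m$-quasi Einstein equation written as $\mathrm{Ric}=\lambda g+\frac{m}{w}\mathrm{Hess}\,w$, and then read it off block by block in the orthogonal splitting $T_eG=\mathbb{R}\xi\oplus\mathfrak h$. Here $\frac{1}{w}\mathrm{Hess}\,w=a^2\,dr\otimes dr+a\,\mathrm{Hess}\,r$, and, as recorded in \cite{He-Petersen-Wylie-Homogeneous}, $\mathrm{Hess}\,r=-h(S\cdot,\cdot)$ on $\mathfrak h$ and vanishes in the $\xi$-direction, so $\Delta r=-\tr(S)$ and $|\mathrm{Hess}\,r|^2=\tr(S^2)$. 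Feeding this into the Ricci formulas \eqref{eqn:RicS} converts the equation into a few algebraic identities among $a$, $\lambda$, $S$, $A$ and $\mathrm{Ric}^H$. In particular the $(\xi,\xi)$-block reads $\mathrm{Ric}(\xi,\xi)=\lambda+ma^2$, which together with $\mathrm{Ric}(\xi,\xi)=-\tr(S^2)$ gives $\lambda=-\tr(S^2)-ma^2$.

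For the first statement I would use that homogeneity forces $\scal$ to be constant, so $\div\,\mathrm{Ric}=\frac{1}{2}d\,\scal=0$, hence $q:=\frac{1}{w}\mathrm{Hess}\,w$ is divergence free. Now $\div(dr\otimes dr)=(\Delta r)\,dr+\mathrm{Hess}\,r(\nabla r,\cdot)=(\Delta r)\,dr$ and $\div(\mathrm{Hess}\,r)=d\Delta r+\mathrm{Ric}(\nabla r,\cdot)=\mathrm{Ric}(\nabla r,\cdot)$ (using that $\Delta r$ is constant, Proposition \ref{Prop:ssplit-properties}), so $0=\div\,q=a^2(\Delta r)\,dr+a\,\mathrm{Ric}(\nabla r,\cdot)$. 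Evaluating this $1$-form on $\nabla r=\xi$ and using $\mathrm{Ric}(\xi,\xi)=-\tr(S^2)$ gives $\tr(S^2)=a\Delta r=-a\,\tr(S)$; evaluating it on $X\perp\xi$ and using $\mathrm{Ric}(\xi,X)=-\div(S)(X)$ from \eqref{eqn:RicS} gives $\div(S)=0$. (If $a=0$ then $w$ is constant and $M$ is Einstein, a case disposed of directly.)

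For the ``moreover'', restrict the equation to $\mathfrak h$: there $\mathrm{Hess}\,w(X,Y)=-aw\,h(SX,Y)$, so $\mathrm{Ric}(X,Y)=\lambda\,h(X,Y)-ma\,h(SX,Y)$, and comparing with the $\mathfrak h$-block of \eqref{eqn:RicS} yields, as endomorphisms, $\mathrm{Ric}^H=\lambda I+(\tr(S)-ma)S+[S,A]$. If $D$ is normal then $[S,A]=0$; moreover the transpose of a normal derivation is again a derivation (simultaneously diagonalize $D$ and $D^t$ and use that $D$ acts with eigenvalue $\mu+\nu$ on the bracket of $\mu$- and $\nu$-eigenvectors while $D^t$ acts there with $\bar\mu+\bar\nu$), so $S=\frac{1}{2}(D+D^t)$ is a derivation. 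Hence $\mathrm{Ric}^H=\lambda I+(\tr(S)-ma)S$ with $(\tr(S)-ma)S$ a derivation. If $\tr(S)-ma\neq 0$ this exhibits $(H,h)$ as an algebraic Ricci soliton. If $\tr(S)-ma=0$, then the first part gives $\tr(S^2)=-a\,\tr(S)=-ma^2=-\frac{(\tr(S))^2}{m}$, while the $(\xi,\xi)$-block gives $\lambda=-\tr(S^2)-ma^2=0$; thus $\mathrm{Ric}^H=0$, and since a Ricci-flat homogeneous Riemannian manifold is flat, $(H,h)$ is flat, as claimed.

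The main obstacle is the identity $\tr(S^2)=-a\,\tr(S)$: merely tracing the $m$-quasi Einstein equation only reproduces the $(\xi,\xi)$-identity, so one has to notice that homogeneity supplies the extra input $\div\,q=0$ and then push it through the Bochner-type divergence identities for $dr\otimes dr$ and $\mathrm{Hess}\,r$ that are already used in the paper. The other ingredients — the Ricci formulas \eqref{eqn:RicS}, the transpose-of-a-normal-derivation remark, and ``Ricci-flat homogeneous $\Rightarrow$ flat'' — are standard.
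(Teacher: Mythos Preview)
Your proof is correct and follows essentially the same route as the paper's: both obtain $\tr(S^2)=-a\tr(S)$ from $\div q=0$ via the Bochner-type identity $\div\Hess r=\mathrm{Ric}(\nabla r,\cdot)$, and both read off the dichotomy from the $\mathfrak h$-block of \eqref{eqn:RicS} after setting $[S,A]=0$. The only minor differences are that the paper gets $\div(S)=0$ directly from the $(X,\xi)$-block of the quasi-Einstein equation (rather than from $\div q$), and that you supply the justification---omitted in the paper---that $S$ is itself a derivation when $D$ is normal, which is what makes $\mathrm{Ric}^H=\lambda I+\beta S$ an algebraic soliton equation.
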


\begin{proof} Consider again the equations (\ref{eqn:RicS}). First
note that $\left(\mathrm{Ric}-\frac{m}{w}\mathrm{Hess}w\right)\left(X,\xi\right)=0$
implies that $\mathrm{div}(S)=0$ is necessary.

We also have $q=\frac{\Hess w}{w}$, for $q$ with $\div q=0$. In
terms of $r$, this gives 
\[
\div\Hess r=-a\Delta rdr.
\]
By the Bochner identity, 
\[
\div\Hess r=\nabla\Delta r+\mathrm{Ric}(\xi)=\mathrm{Ric}(\xi).
\]
So using the equation $\mathrm{Ric}(\xi,\xi)=-\tr(S^{2})$ from (\ref{eqn:RicS})
we have 
\[
-\tr(S^{2})=\mathrm{Ric}(\xi,\xi)=\div\Hess r(\xi,\xi)=-a\Delta r=a\tr(S).
\]
Now, if $D$ is normal we obtain $[A,S]=0$ so the equations become

\begin{eqnarray*}
\left(\mathrm{Ric}-\frac{m}{w}\mathrm{Hess}w\right)(\xi,\xi) & = & -\tr(S^{2})-ma^{2},\\
\left(\mathrm{Ric}\left(X,X\right)-\frac{m}{w}\mathrm{Hess}w\right)(X,X) & = & \mathrm{Ric}^{H}(X,X)-\left(\mathrm{tr}S-ma\right)h\left(S(X),X\right).
\end{eqnarray*}
Let $\beta=\tr S-ma$. When $\beta\neq0$ we have $\mathrm{Ric}^{H}=\lambda+\beta S$,
so $H$ is a Ricci soliton.

Otherwise, for $\beta=0$ it follows that $\mathrm{Ric}^{H}=\lambda g$,
$\tr S=am$, and $\lambda=-\tr(S^{2})-ma^{2}.$ But then the equation
$\tr(S^{2})=-a\tr(S)$ implies that $\lambda=0$ and consequently $H$
is flat since  homogeneous Ricci flat metrics are flat \cite{AK}.

\end{proof}

We finish with a final characterization of spaces that are conformally
Einstein that comes from a different approach. 

\begin{lemma} Assume $\left(M^{n},g\right)$ has a one-dimensional
space of solutions to the conformal Einstein equation: 
\[
\mathring{\mathrm{Hess}w}=\frac{w}{2-n}\mathring{\mathrm{Ric}},
\]
i.e., $\tilde{g}=w^{-2}g$ is an Einstein metric. If $G$ is a transitive
group of isometries and $H\subset G$ is the co-dimension one normal
subgroup that fixes $w$, then $H$ acts isometrically on the conformally
changed Einstein metric $\tilde{g}$ and $G$ acts conformally. Moreover,
either 
\begin{enumerate}
\item $w$ is constant and $g$ is Einstein, 
\item $w=e^{ar}$ and $\left(M,g\right)$ is isometric to $H^{n}\left(-a^{2}\right)$,
or 
\item $w=e^{ar}$ and all conformal fields from the action of $G$ have
constant divergence with respect to $\tilde{g}$. 
\end{enumerate}
\end{lemma}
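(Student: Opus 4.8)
The plan is to push the $G$-action through the rigidity of Lemma~\ref{Lem:ssplit-rigid} and then feed the resulting structure into the divergence-free rigidity of Corollary~\ref{cor:div_rigid}. First I would record how $G$ acts on $\tilde g=w^{-2}g$. Since the Ricci tensor is an isometry invariant, $q=\tfrac1{2-n}\Ric$ is $G$-invariant and the conformal Einstein equation is $G$-equivariant, so $G$ acts on its one-dimensional solution space; hence for every $\gamma\in G$ there is a constant $C_\gamma$ with $\gamma^{*}w=C_\gamma w$. For $\gamma\in H$ one has $C_\gamma=1$, so $\gamma^{*}\tilde g=(\gamma^{*}w)^{-2}\gamma^{*}g=w^{-2}g=\tilde g$ and $H$ acts isometrically on $\tilde g$; for general $\gamma\in G$, $\gamma^{*}\tilde g=C_\gamma^{-2}\tilde g$, so $G$ acts by homotheties of $\tilde g$, in particular conformally. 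If $w$ is constant the equation forces $\mathring{\Ric}=0$, i.e.\ $g$ is Einstein, which is case~(1); from now on $w$ is non-constant.

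Applying Lemma~\ref{Lem:ssplit-rigid}(2) to $\gamma^{*}w=C_\gamma w$ makes $(M,g)$ a one-dimensional extension of $H$, with $g=dr^{2}+g_r$ and $w=be^{ar}$; rescaling $w$ (which only rescales $\tilde g$) and, if necessary, replacing $r$ by $-r$, we may take $w=e^{ar}$ with $a>0$. For a one-parameter subgroup $\gamma_t\subset G$ the constants satisfy $C_{\gamma_t}=e^{\mu t}$, so $\gamma_t^{*}\tilde g=e^{-2\mu t}\tilde g$; differentiating, the generating conformal field $V$ obeys $L_V\tilde g=-2\mu\tilde g$ with $\mu$ constant, hence $\div_{\tilde g}V=-n\mu$ is constant. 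Thus, once $w=e^{ar}$, \emph{every} conformal field coming from $G$ has constant divergence with respect to $\tilde g$, which is the content of~(3).

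To isolate the sharper case~(2), set $q'=\tfrac1w\Hess w=a^{2}\,dr^{2}+a\,\Hess r$. Then $\gamma^{*}q'=\Hess(\gamma^{*}w)/(\gamma^{*}w)=q'$, so $q'$ is $G$-invariant, and rewriting the conformal Einstein equation as $\Hess w=\tfrac w{2-n}\Ric+(\tfrac{\Delta w}{n}-\tfrac{w\,\scal}{n(2-n)})g$ and dividing by $w$ gives $q'=\tfrac1{2-n}\Ric-\phi g$ with $\phi=\tfrac{\scal}{n(2-n)}-\tfrac{\Delta w}{nw}$; since $q'$ and $\tfrac1{2-n}\Ric$ are both $G$-invariant and $g$ is nowhere zero, $\phi$ is $G$-invariant, hence a constant $\lambda$. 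Therefore $q'=\tfrac1{2-n}\Ric-\lambda g$ is divergence free on the homogeneous space, and Corollary~\ref{cor:div_rigid} applies: $\Delta r\in[0,(n-1)a]$, the extreme value $\Delta r=(n-1)a$ forcing $(M,g)\cong H^{n}(-a^{2})$ (case~(2)) while every other value leaves us in case~(3) by the previous paragraph. The main obstacle is precisely this last step: one must notice that the a priori function-valued discrepancy $q'-\tfrac1{2-n}\Ric$ is forced to be a constant multiple of $g$ — so that $q'$ inherits divergence-freeness from $\Ric$ — after which Corollary~\ref{cor:div_rigid} does the geometric work; the remaining steps are formal manipulations of the $G$-action on $w$ and $\tilde g$.
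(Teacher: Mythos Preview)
Your proof is correct and in fact substantially simpler than the paper's. The key difference is your observation that $G$ acts on $\tilde g$ not merely conformally but by \emph{homotheties}: from $\gamma^{*}w=C_\gamma w$ with $C_\gamma$ constant you get $\gamma^{*}\tilde g=C_\gamma^{-2}\tilde g$, and differentiating along any one-parameter subgroup gives $L_V\tilde g=-2\mu\tilde g$ with $\mu$ constant, hence $\div_{\tilde g}V$ constant. This establishes case~(3) outright for every non-constant $w$. The paper, by contrast, only records that $G$ acts conformally on $\tilde g$, then invokes Yano's formula to show $u=\tfrac{1}{n}\div_{\tilde g}Z\in\mathring{V}(M,\tilde g)$ and bifurcates on whether $u$ is constant; your homothety argument shows that the ``non-constant $u$'' branch is actually vacuous, so the paper's subsequent warped-product analysis and appeal to Corollary~\ref{cor:div_rigid}/Takagi to reach case~(2) is unnecessary for the trichotomy as stated. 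What your approach buys is brevity and the avoidance of Yano's formula entirely; what the paper's longer route buys is an explicit geometric picture of $\tilde g$ in the (hypothetical) non-constant-$u$ regime.

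Your third paragraph, showing $q'=\tfrac1w\Hess w$ is $G$-invariant and divergence free so that Corollary~\ref{cor:div_rigid} applies, is correct but superfluous for the lemma: once you have shown case~(3) holds whenever $w$ is non-constant, the disjunction is proved, since case~(2) is a special instance of case~(3). If you wish to retain it as a sharpening, you should say so explicitly rather than presenting it as part of the proof of the stated trichotomy.
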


\begin{proof} Note that $G$ clearly acts conformally with respect
to $\tilde{g}$. If $G$ acts isometrically, then $w$ is forced to
be constant and vice versa. Thus we can assume that $w=e^{ar}$, $a>0$.
Since $H$ fixes $w$ it follows that it acts isometrically on $\tilde{g}$.
This shows that the Riemannian submersion $r:\left(M,g\right)\rightarrow\bbR$
can be altered to a Riemannian submersion $\frac{1}{aw}:\left(M,\tilde{g}\right)\rightarrow\left(0,\infty\right)$.
We let $\mathfrak{h}\subset\mathfrak{g}$ denote the Lie algebras
of vector fields on $M$ that correspond to $H\subset G$. On $\left(M,\tilde{g}\right)$
all of the fields in $\mathfrak{g}$ are conformal and the fields
in $\mathfrak{h}$ are Killing. Consider $Z\in\mathfrak{g}-\mathfrak{h}$
so that $L_{Z}\left(\tilde{g}\right)=\frac{2}{n}\left(\mathrm{div}_{\tilde{g}}Z\right)\tilde{g}$.
A well-known formula by Yano shows that if $u=\frac{\mathrm{div}_{\tilde{g}}Z}{n}$,
then 
\[
L_{Z}\tilde{\ric}=-\left(n-2\right)\hess_{\tilde{g}}u-\Delta u\tilde{g}.
\]
As $\tilde{\ric}$ is Einstein this implies that $u\in\mathring{V}\left(M,\tilde{g}\right)$.
If some nonzero $u$ is constant, then all fields in $\mathfrak{g}$
have constant divergence with respect to $\tilde{g}$ as in case (3).
Otherwise, we have a non-constant $u\in\mathring{V}\left(M,\tilde{g}\right)$.
This gives a local warped product structure for $\tilde{g}$. We claim
that it is global by showing that $u=u\left(r\right)$. Since $\mathfrak{h}\subset\mathfrak{g}$
is an ideal we have that $\left[X,Z\right]\in\mathfrak{h}$ for all
$X\in\mathfrak{h}$. Thus 
\[
0=L_{\left[X,Z\right]}\tilde{g}=L_{X}L_{Z}\tilde{g}-L_{Z}L_{X}\tilde{g}=L_{X}\left(2u\tilde{g}\right)=2\left(D_{X}u\right)\tilde{g}.
\]
This shows that $u$ is invariant under $H$ and hence that $u=u\left(r\right)$.
Thus 
\begin{eqnarray*}
\tilde{g} & = & w^{-2}g=dt^{2}+\varphi^{2}\left(t\right)g_{N},\\
g & = & w^{2}\left(dt^{2}+\rho^{2}g_{N}\right)=dr^{2}+\rho^{2}g_{N}.
\end{eqnarray*}
When the metric is inessential we can use Corollary \ref{cor:div_rigid}
to conclude that we are in case (2). In case it is essential we can
instead use Takagi's classification (see Theorem \ref{thm:Takagi})
to see that only hyperbolic space can admit solutions of the from
$w=e^{ar},a\neq0$, to the conformal Einstein equation.

\end{proof}

\appendix

\section{K\"{a}hler manifolds}

In this appendix we include a discussion of some of the spaces of functions discussed above on K\"{a}hler manifolds.  No isometric symmetry is assumed in this section, but we will assume that the tensor $q$ is Hermitian.

Recall that a K\"ahler manifold is a complex manifold, $M$, equipped with a Riemannian metric, $g$, such that the complex structure $J$ is skew-adjoint and parallel with respect to $g$. A symmetric $2$-tensor, $q$ is called \emph{Hermitian} if $q(Jv, w) = -q(v, Jw)$.  If $q$ is Hermitian, then $\chi(v,w) = q(Jv,w)$ defines a $2$-form.   Note that the Ricci tensor and metric of a K\"ahler manifold are Hermitian with closed $2$-form.  Thus, for K\"ahler gradient Ricci solitons, quasi Einstein metrics, and conformally Einstein metrics the tensor $q$ is Hermitian and the corresponding $2$-form is closed.  

In fact, the problem of when a K\"ahler manifold admits a non-trivial function with Hermitian Hessian has been investigated extensively by Derdzinski and Maschler where they obtain interesting results for  K\"ahler conformally Einstein manfiolds \cite{DM1, DM2, DM3}.   Note that functions with Hermitian Hessian are also called Killing potentials because Derdzinski and Maschler show that a function has Hermitian Hessian if and only if $J$ applied to the gradient is a Killing field.  Case, Shu, and Wei also obtain a rigidity result for K\"ahler quasi-Einstein metrics which says that they must be a quotient of a  product of a surface and an Einstein metric \cite[Theorem 1.3]{Case-Shu-Wei}.  In this appendix we verify that this result holds in general for functions in a solution space of the form $W(q)$ when $q$ is Hermitian and the corresponding $2$-form is closed. 

\begin{proposition} \label{Thm:Kaehler} Let $(M,g)$ be a simply connected  K\"ahler
manifold and  $q$ a Hermitian symmetric two-tensor such that the corresponding $2$-form is closed.  If $W(q)$ is nontrivial, then $(M,g)$ is an isometric product $N_1^2 \times N_2^{n-2}$ and  $W$ consists of  functions on the $N_1$ factor only. 
\end{proposition}

\begin{proof}
Let $w$ be a non-constant function such that $\mathrm{Hess}w = w q$. The proof proceeds as in \cite{Case-Shu-Wei} as the only properties used in the proof come from the general properties of $q$.  We include an outline of the proof for completeness. 

Let $\chi(v,w) = q(Jv,w)$, $\omega = g(Jv,w)$ and $\phi = \mathrm{Hess}w(Jv,w)= \frac{1}{2} L_{\nabla w} \omega$.  Then $\phi$ is closed as the Lie derivative of a closed form $\omega$.  By assumption $\frac{\phi}{w}$ is also closed as it is equal to $\chi$.  Therefore, $dw \wedge \phi = 0$. 

Then 
\begin{align*}
(dw \wedge \phi)(X,Y,Z) &= (D_X w) \phi(Y, Z) + (D_Y w) \phi(Z,X) + (D_Z w) \phi(X,Y) \\
&= (D_X w) g(\nabla_{JY} \nabla w, Z) + (D_Y w) g(\nabla_{JZ} \nabla w, X) + (D_Z w) g(\nabla_{JX} \nabla w, Y)
\end{align*}
Taking  $X,Y \perp \nabla w$, $Z =\nabla w$  then gives 
\[ 0 = |\nabla w|^2 g(\nabla_{JX} \nabla w, Y)= -|\nabla w|^2 g(\nabla_{X} \nabla w, JY).\]
So that $\nabla _X \nabla w \perp JY$ whenever $\nabla w  \neq 0$.  On the other hand, taking $X = \nabla w$, $Y = J\nabla w$ and $Z \perp \nabla w$ we also obtain
\[ 0 = |\nabla w|^2 \phi(JX,Z) = -  |\nabla w|^2 g(\nabla_{\nabla  w} \nabla w, Z).\]
Which implies that $\nabla_{\nabla w} \nabla w$ is parallel to $\nabla w$ when $\nabla w \neq 0$.

Putting this together shows that $\nabla_{\cdot}\nabla w \in \mathrm{span}\{ \nabla w, J\nabla w\}.$  The fact that $\mathrm{Hess} w$ is Hermitian also implies that  $\nabla_{\cdot}J \nabla w = J(\nabla_{\cdot}\nabla w).$ So we also have that $\nabla_{\cdot}J\nabla w \in \mathrm{span}\{ \nabla w, J\nabla w\}.$

This implies that $\mathrm{span}\{ \nabla w, J\nabla w\}$ is a parallel distribution on the set where $\nabla w  \neq 0$ and thus gives an isometric splitting on this set.  Since $\nabla_{\cdot} \nabla w = w q$ and $q$ is assumed to be smooth, we also have that this distribution is locally uniformly continuous, so that the isometric splitting extends to the closure of $\{ \nabla w \neq 0 \}$. However, since $w$ is a Killing potential,  by remark 5.4 in \cite{DM1},  $\nabla w \neq 0$ almost everywhere, so we have the isometric splitting on all of $M$. 
\end{proof}

In contrast to this result for $W(q)$ note that there are many interesting examples of K\"ahler Ricci solitons and K\"ahler conformally Einstein spaces, so no such strong rigidity is possible for the spaces of function $F(q)$, $\mathring{F}(q)$ or $\mathring{W}(q)$ when $q$ is assumed to be Hermitian.  See \cite{DM1, DM2, DM3} for further results on K\"ahler Killing potentials.

\begin{bibdiv}

\begin{biblist}

\bib{AK}{article}{
   author={Alekseevski\u{\i}, D. V.},
   author={Kimel\cprime fel\cprime d, B. N.},
   title={Structure of homogeneous Riemannian spaces with zero Ricci
   curvature},
   language={Russian},
   journal={Funkcional. Anal. i Prilo\v{Z}en.},
   volume={9},
   date={1975},
   number={2},
   pages={5--11},
   issn={0374-1990},
   review={\MR{0402650}},
}

\bib{Besse}{book}{
   author={Besse, Arthur L.},
   title={Einstein manifolds},
   series={Ergebnisse der Mathematik und ihrer Grenzgebiete (3) [Results in
   Mathematics and Related Areas (3)]},
   volume={10},
   publisher={Springer-Verlag, Berlin},
   date={1987},
   pages={xii+510},
   isbn={3-540-15279-2},
   review={\MR{867684}},
   doi={10.1007/978-3-540-74311-8},
}

\bib{Brinkmann}{article}{ author={Brinkmann, H. W.}, title={Einstein
spaces which are mapped conformally on each other}, journal={Math.
Ann.}, volume={94}, date={1925}, number={1}, pages={119--145},
}
\bib{Homogen-Almost-Ricci}{article}{
   author={Calvi\~{n}o-Louzao, Esteban},
   author={Fern\'{a}ndez-L\'{o}pez, Manuel},
   author={Garc\'{\i}a-R\'{\i}o, Eduardo},
   author={V\'{a}zquez-Lorenzo, Ram\'{o}n},
   title={Homogeneous Ricci almost solitons},
   journal={Israel J. Math.},
   volume={220},
   date={2017},
   number={2},
   pages={531--546},
   issn={0021-2172},
   review={\MR{3666435}},
   doi={10.1007/s11856-017-1538-3},
}

\bib{4d-Conformal}{article}{ author={Calviño-Louzao, E.}, author={García-Martínez,
X.}, author={García-Río, E.}, author={Gutiérrez-Rodríguez, I.},
author={Vázquez-Lorenzo, R.}, title={Conformally Einstein and Bach-flat
four-dimensional homogeneous manifolds}, journal={J. Math. Pures
Appl. (9)}, volume={130}, date={2019}, pages={347--374}, issn={0021-7824},
review={\MR{4001637}}, doi={10.1016/j.matpur.2019.01.005},
}

\bib{Case-Shu-Wei}{article}{
   author={Case, Jeffrey},
   author={Shu, Yu-Jen},
   author={Wei, Guofang},
   title={Rigidity of quasi-Einstein metrics},
   journal={Differential Geom. Appl.},
   volume={29},
   date={2011},
   number={1},
   pages={93--100},
   issn={0926-2245},
   review={\MR{2784291}},
   doi={10.1016/j.difgeo.2010.11.003},
}

\bib{DM1}{article}{
   author={Derdzinski, A.},
   author={Maschler, G.},
   title={Local classification of conformally-Einstein K\"{a}hler metrics in
   higher dimensions},
   journal={Proc. London Math. Soc. (3)},
   volume={87},
   date={2003},
   number={3},
   pages={779--819},
   issn={0024-6115},
   review={\MR{2005883}},
   doi={10.1112/S0024611503014175},
}

\bib{DM2}{article}{
   author={Derdzinski, Andrzej},
   author={Maschler, Gideon},
   title={A moduli curve for compact conformally-Einstein K\"{a}hler manifolds},
   journal={Compos. Math.},
   volume={141},
   date={2005},
   number={4},
   pages={1029--1080},
   issn={0010-437X},
   review={\MR{2148201}},
   doi={10.1112/S0010437X05001612},
}

\bib{DM3}{article}{
   author={Derdzinski, A.},
   author={Maschler, G.},
   title={Special K\"{a}hler-Ricci potentials on compact K\"{a}hler manifolds},
   journal={J. Reine Angew. Math.},
   volume={593},
   date={2006},
   pages={73--116},
   issn={0075-4102},
   review={\MR{2227140}},
   doi={10.1515/CRELLE.2006.030},
}

\bib{Gover}{article}{
   author={Gover, A. Rod},
   title={Almost Einstein and Poincar\'{e}-Einstein manifolds in Riemannian
   signature},
   journal={J. Geom. Phys.},
   volume={60},
   date={2010},
   number={2},
   pages={182--204},
   issn={0393-0440},
   review={\MR{2587388}},
   doi={10.1016/j.geomphys.2009.09.016},
}

\bib{Griffin}{article}{
   author={Griffin, Erin},
   title={Gradient ambient obstruction solitons on homogeneous manifolds},
   journal={Ann. Global Anal. Geom.},
   volume={60},
   date={2021},
   number={3},
   pages={469--499},
   issn={0232-704X},
   review={\MR{4304859}},
   doi={10.1007/s10455-021-09784-3},
}

\bib{He-Petersen-Wylie}{article}{ author={He, Chenxu}, author={Petersen,
Peter}, author={Wylie, William}, title={Warped product rigidity},
journal={Asian J. Math.}, volume={19}, date={2015}, number={1},
pages={135--170}, issn={1093-6106}, review={\MR{3318016}},
doi={10.4310/AJM.2015.v19.n1.a6}, }

\bib{He-Petersen-Wylie-Unique}{article}{
   author={He, Chenxu},
   author={Petersen, Peter},
   author={Wylie, William},
   title={Uniqueness of warped product Einstein metrics and applications},
   journal={J. Geom. Anal.},
   volume={25},
   date={2015},
   number={4},
   pages={2617--2644},
   issn={1050-6926},
   review={\MR{3427140}},
   doi={10.1007/s12220-014-9528-8},
}

\bib{He-Petersen-Wylie-Homogeneous}{article}{ author={He, Chenxu},
author={Petersen, Peter}, author={Wylie, William}, title={Warped
product Einstein metrics on homogeneous spaces and homogeneous Ricci
solitons}, journal={J. Reine Angew. Math.}, volume={707}, date={2015},
pages={217--245}, issn={0075-4102}, review={\MR{3403459}},
doi={10.1515/crelle-2013-0078}, }

\bib{Kim-Kim}{article}{
   author={Kim, Dong-Soo},
   author={Kim, Young Ho},
   title={Compact Einstein warped product spaces with nonpositive scalar
   curvature},
   journal={Proc. Amer. Math. Soc.},
   volume={131},
   date={2003},
   number={8},
   pages={2573--2576},
   issn={0002-9939},
   review={\MR{1974657}},
   doi={10.1090/S0002-9939-03-06878-3},
}

\bib{Kuhnel}{article}{ author={K{ü}hnel, Wolfgang}, title={Conformal
transformations between Einstein spaces}, conference={ title={Conformal
geometry}, address={Bonn}, date={1985/1986}, }, book={ series={Aspects
Math., E12}, publisher={Vieweg}, place={Braunschweig}, }, date={1988},
pages={105--146} }

\bib{KR}{article}{ author={K{ü}hnel, Wolfgang}, author={Rademacher,
Hans-Bert}, title={Einstein spaces with a conformal group}, journal={Results
Math.}, volume={56}, date={2009}, number={1-4}, pages={421--444}
}

\bib{KR-ConformalEinstein}{article}{ author={Kühnel, Wolfgang}, author={Rademacher,
Hans-Bert}, title={Conformally Einstein product spaces}, journal={Differential
Geom. Appl.}, volume={49}, date={2016}, pages={65--96}, issn={0926-2245},
review={\MR{3573824}}, doi={10.1016/j.difgeo.2016.07.005},
}

\bib{Lafuente}{article}{ author={Lafuente, Ramiro A.}, title={On
homogeneous warped product Einstein metrics}, journal={Bull. Lond.
Math. Soc.}, volume={47}, date={2015}, number={1}, pages={118--126},
issn={0024-6093}, review={\MR{3312970}}, doi={10.1112/blms/bdu103},
}
\bib{Leitner}{article}{
   author={Leitner, Felipe},
   title={Examples of almost Einstein structures on products and in
   cohomogeneity one},
   journal={Differential Geom. Appl.},
   volume={29},
   date={2011},
   number={3},
   pages={440--462},
   issn={0926-2245},
   review={\MR{2795850}},
   doi={10.1016/j.difgeo.2011.03.009},
}

\bib{OS}{article}{ author={Osgood, Brad}, author={Stowe,
Dennis}, title={The Schwarzian derivative and conformal mapping
of Riemannian manifolds}, journal={Duke Math. J.}, volume={67},
date={1992}, number={1}, pages={57--99} }

\bib{Petersen-Wylie-Symm}{article}{ author={Petersen, Peter},
author={Wylie, William}, title={On gradient Ricci solitons with
symmetry}, journal={Proc. Amer. Math. Soc.}, volume={137}, date={2009},
number={6}, pages={2085--2092}, issn={0002-9939}, review={\MR{2480290}},
doi={10.1090/S0002-9939-09-09723-8}, }

\bib{Takagi}{article}{ author={Takagi, Hitoshi}, title={Conformally
flat Riemannian manifolds admitting a transitive group of isometries},
journal={Tohoku Math. J. (2)}, volume={27}, date={1975}, number={1},
pages={103--110}, issn={0040-8735}, review={\MR{442852}},
doi={10.2748/tmj/1178241040}, }

\bib{Tashiro}{article}{ author={Tashiro, Yoshihiro}, title={Complete
Riemannian manifolds and some vector fields}, journal={Trans. Amer.
Math. Soc.}, volume={117}, date={1965}, pages={251--275} }

\end{biblist}

\end{bibdiv}
\end{document}